\newtheoremstyle{natural}
{\parsep}   % ABOVESPACE
{\parsep}   % BELOWSPACE
{\normalfont}  % BODYFONT
{0pt}       % INDENT (empty value is the same as 0pt)
{\bfseries} % HEADFONT
{.}         % HEADPUNCT
{5pt plus 1pt minus 1pt} % HEADSPACE
{}          % CUSTOM-HEAD-SPEC
\newtheoremstyle{remark}
{\parsep}   % ABOVESPACE
{\parsep}   % BELOWSPACE
{\normalfont}  % BODYFONT
{0pt}       % INDENT (empty value is the same as 0pt)
{\itshape} % HEADFONT
{.}         % HEADPUNCT
{5pt plus 1pt minus 1pt} % HEADSPACE
{}          % CUSTOM-HEAD-SPEC
\theoremstyle{plain}
\newtheorem{thm}{Theorem}[section]
\newtheorem{lem}[thm]{Lemma}
\newtheorem{prp}[thm]{Proposition}
\newtheorem*{dfn}{Definition}
\theoremstyle{remark}
\newtheorem*{rmk}{Remark}
\numberwithin{equation}{section}
\crefname{thm}{theorem}{theorems}
\crefname{prp}{proposition}{propositions}
\crefname{cor}{corollary}{corollaries}
\newcommand{\Z}{\mathbb{Z}}
\newcommand{\Q}{\mathbb{Q}}
\newcommand{\N}{\mathbb{N}}
\newcommand{\R}{\mathbb{R}}
\newcommand{\C}{\mathbb{C}}
\newcommand{\F}{\mathbb{F}}
\newcommand{\A}{\mathbb{A}}
\newcommand{\bs}{\backslash}
\newcommand{\cb}[1]{\left\{{#1}\right\}}
\newcommand{\cbm}[2]{\left\{{#1}\;\middle|\;{#2}\right\}}
\newcommand{\Char}{\operatorname{char}}
\newcommand{\diag}{\operatorname{diag}}
\newcommand{\e}{\operatorname{e}}
\newcommand{\id}{\operatorname{id}}
\newcommand{\im}{\operatorname{Im}}
\newcommand{\ord}{\operatorname{ord}}
\newcommand{\p}{\mathbf{p}}
\newcommand{\pb}[1]{\left\langle{#1}\right\rangle}
\newcommand{\pd}[2]{\frac{\partial{#1}}{\partial{#2}}}
\newcommand{\ppmod}[1]{\hspace{-0.3cm}\pmod{#1}}
\newcommand{\ol}[1]{\overline{#1}}
\newcommand{\rank}{\operatorname{rank}}
\newcommand{\rb}[1]{\left({#1}\right)}
\newcommand{\sbe}{\subseteq}
\newcommand{\vb}[1]{\left| {#1} \right|}
\newcommand{\Kl}{\operatorname{Kl}}
\newcommand{\Klu}{\underline{\operatorname{Kl}}}
\newcommand{\GL}{\operatorname{GL}}
\newcommand{\SL}{\operatorname{SL}}
\newcommand{\Sp}{\operatorname{Sp}}
\newcommand{\mc}{\mathcal}
\newcommand{\mf}{\mathfrak}
\newcommand{\ms}{\mathscr}
\newcommand{\bdd}{\begin{center}\begin{tikzcd}}
\newcommand{\bd}{\begin{tikzcd}}
\newcommand{\edd}{\end{tikzcd}\end{center}}
\newcommand{\ed}{\end{tikzcd}}
\newcommand{\bdp}{\begin{center}\begin{tikzpicture}}
\newcommand{\edp}{\end{tikzpicture}\end{center}}
\newcommand{\bi}{\begin{itemize}}
\newcommand{\ei}{\end{itemize}}
\newcommand{\bt}{\begin{tikzpicture}}
\newcommand{\et}{\end{tikzpicture}}
\newcommand{\ba}{\[\begin{aligned}}
\newcommand{\ea}{\end{aligned}\]}
\newcommand{\bp}{\begin{pmatrix}}
\newcommand{\ep}{\end{pmatrix}}
\newcommand{\bv}{\begin{vmatrix}}
\newcommand{\ev}{\end{vmatrix}}
\newcommand{\bb}{\begin{bmatrix}}
\newcommand{\eb}{\end{bmatrix}}
\newcommand{\bB}{\begin{Bmatrix}}
\newcommand{\eB}{\end{Bmatrix}}
\newcommand{\bea}{\begin{enumerate}[label=(\alph*)]}
\newcommand{\ber}{\begin{enumerate}[label=(\roman*)]}
\newcommand{\ben}{\begin{enumerate}[label=(\arabic*)]}
\newcommand{\ee}{\end{enumerate}}
\title[Symplectic Kloosterman sums]{Symplectic Kloosterman sums and Poincaré series}
\author{Siu Hang Man}
\address{Siu Hang Man, Mathematisches Institut, Rheinische Friedrich-Wilhelms-Universität Bonn, Endenicher Allee 60, 53115 Bonn, Germany}
\email{shman@math.uni-bonn.de}
\date{}
\thanks{The author is supported in part by DAAD Graduate School Scholarship Programme.}
\subjclass[2020]{11L05, 11F30}
\keywords{Kloosterman sums, Poincaré series}
\begin{document}

%--------------------Abstract --------------------------%
\begin{abstract}
We prove power-saving bounds for general Kloosterman sums on $\Sp(4)$ associated to all Weyl elements via a stratification argument coupled with $p$-adic stationary phase methods. We relate these Kloosterman sums to the Fourier coefficients of $\Sp(4)$ Poincaré series.
\end{abstract}
%--------------------Abstract --------------------------%

\maketitle

\section{Introduction}
The classical Kloosterman sum is given by
\ba
S\rb{m, n; q} = \sum\limits_{\substack{x, y \in \Z/q\Z \\ xy \equiv 1\pmod{q}}}\e\rb{\frac{mx + ny}{q}}, 
\ea
where $\e\rb{x} = e^{2\pi i x}$. Kloosterman sums naturally appear in the Fourier expansion of $\GL(2)$ Poincaré series
\ba
P_m\rb{z; \nu} = \sum\limits_{\gamma \in \Gamma_\infty \bs \SL\rb{2,\Z}} \im\rb{\gamma z}^\nu\e\rb{m\rb{\gamma z}},
\ea
which play an important role in number theory. In \cite{BFG1988}, Bump, Friedberg and Goldfeld introduced $\GL(r)$ Poincaré series for $r\geq 2$, and gave a generalisation of Kloosterman sums to $\GL(3)$. The notion of Kloosterman sums was then generalised to $\GL(r)$ for $r\geq 2$ by Friedberg \cite{Friedberg1987}, and then to arbitrary simply connected Chevalley groups by Dąbrowski \cite{Dabrowski1993}. 

By methods of algebraic geometry, Weil \cite{Weil1948} obtained a bound for $\GL(2)$ Kloosterman sums
\ba
\vb{S\rb{m, n; q}} \ll \tau\rb{q} \rb{m, n, q}^{1/2} q^{1/2},
\ea
where $\tau$ denotes the divisor function. However, it remains a major open problem to give non-trivial bounds for Kloosterman sums in general, and currently only a small set of examples can be treated. Bounds for $\GL(3)$ Kloosterman sums were first obtained by Larsen \cite[Appendix]{BFG1988} and Stevens \cite{Stevens1987}, and were improved by Dąbrowski and Fisher \cite{DF1997}. Bounds for $\GL(4)$ Kloosterman sums were given by Huang \cite[Appendix]{GSW2019}. Friedberg \cite{Friedberg1987} generalised the results to $\GL(r)$ Kloosterman sums attached to certain Weyl elements. On reductive groups, Dąbrowski and Reeder \cite{DR1998} gave the size of Kloosterman sets, establishing a trivial bound for Kloosterman sums on reductive groups. 

Other than Poincaré series, another application of Kloosterman sums is found in the relative trace formula, which integrates an automorphic kernel over two subgroups with their respective characters. In particular, a prime application for bounds of Kloosterman sums is the analysis of the arithmetic side of the Petersson/Kuznetsov spectral summation formula. A more detailed description of this can be found in \cite{Blomer2019}.

Now we introduce the main results. Let 
\ba
G = \Sp(2r) &= \cbm{ M \in \GL(2r) }{ M^T JM = J}, & J &= \bp & I_n\\ -I_n\ep 
\ea
be the standard symplectic group. When $k$ is a field, $G(k)$ is the group of linear transformations of $k^{2r}$ preserving a symplectic bilinear form on $k^{2r}$. The standard torus and the standard unipotent subgroup of $G$ are given by
\ba
\scalebox{0.95}{$\displaystyle T = \cbm{\bp T_0\\ & T_0^{-1}\ep \in G}{T_0 \text{ diagonal}}, \quad U = \cbm{\bp U_0 & S\\ & (U_0^{-1})^T \ep\in G}{ U_0 \text{ upper triangular, unipotent}}$}
\ea
respectively. We denote by $N = N_G(T)$ the normaliser of $T$ in $G$. The Weyl group is given by $W := N_G(T)/T$. Let $w: N \to W$ be the canonical projection map with respect to this decomposition. For $n\in N$, we also define $U_n := U \cap n^{-1} U^T n$, and $\ol U_n := U \cap n^{-1} U n$. Note that $U_n, \ol U_n$ depend only on the image $w(n)$ of the canonical projection. 

Here we follow the notations in Stevens \cite{Stevens1987}. Let $p$ be a rational prime. We have a Bruhat decomposition
\ba
G\rb{\Q_p} = U\rb{\Q_p} N\rb{\Q_p} U_n\rb{\Q_p}.
\ea
For $n\in N\rb{\Q_p}$, we define 
\ba
C(n) &= U\rb{\Q_p} n U\rb{\Q_p} \cap G\rb{\Z_p},\\
X(n) &= U\rb{\Z_p} \bs C(n) / U_n\rb{\Z_p},
\ea
and projection maps
\ba
u: X(n) &\to U\rb{\Z_p} \bs U\rb{\Q_p},\\
u': X(n) &\to U\rb{\Q_p} / U_n\rb{\Z_p}
\ea
by the relation $x = u(x) n u'(x)$ for $x \in X(n)$. 

\begin{rmk}
In \cite{Stevens1987}, the notions above are defined for $\GL(r)$, but it is straightforward to check that the construction also works for $\Sp(2r)$, with essentially the same proofs. In particular, $X(n)$ is finite, and the projection maps $u,u'$ are well-defined. 
\end{rmk}

Let $n\in N\rb{\Q_p}$, $\psi_p$ a character of $U\rb{\Q_p}$ which is trivial on $U\rb{\Z_p}$, and $\psi'_p$ a character of $U_n\rb{\Q_p}$ trivial on $U_n\rb{\Z_p}$, such that $\psi'_p$ is the restriction of some character of $U\rb{\Q_p}$ trivial on $U\rb{\Z_p}$. Then the local Kloosterman sum is given by
\ba
\Kl_p\rb{n, \psi_p, \psi'_p} = \sum\limits_{x\in X(n)} \psi_p\rb{u(x)} \psi'_p\rb{u'(x)}.
\ea
If $\psi'_p$ is given as a character of $U\rb{\Q_p}$ which is trivial on $U\rb{\Z_p}$, we write $\Kl_p\rb{n, \psi_p, \psi'_p}$ to mean $\Kl_p\rb{n, \psi_p, \psi'_p|_{U_n\rb{\Q_p}}}$.

To define a global Kloosterman sum, let $n \in N(\Q)$, $\psi = \prod\limits_p \psi_p$ a character of $U(\A)$ which is trivial on $\prod\limits_p U\rb{\Z_p}$, and $\psi'$ a character of $U_n\rb{\A}$ trivial on $\prod\limits_p U_n\rb{\Z_p}$, such that $\psi'$ is the restriction of some character of $U\rb{\A}$ trivial on $\prod\limits_p U\rb{\Z_p}$. Then the global Kloosterman sum is given by
\ba
\Kl_p\rb{n, \psi, \psi'} = \prod\limits_p \Kl_p\rb{n, \psi_p, \psi'_p}.
\ea

\begin{rmk}
This definition of Kloosterman sums is different from the symplectic Kloosterman sums introduced by Kitaoka \cite{Kitaoka1984}, which are more relevant for classical $\Sp(4)$ Fourier expansions with respect to the upper right 2-by-2 block, which however is not a full parabolic subgroup. T\'oth \cite{Toth2013} proved some properties and estimates of such Kloosterman sums. The Kloosterman sums introduced here fit into the general framework of Kloosterman sums defined on reductive groups, see e.g. Dąbrowski \cite{Dabrowski1993}. 
\end{rmk}

For $G = \Sp\rb{4, \Q_p}$, a set of simple roots of $G$ with respect to the maximal torus $T$ is given by $\Delta = \cb{\alpha, \beta}$, where
\ba
\alpha\rb{\diag\rb{y_1, y_2, y_1^{-1}, y_2^{-1}}} &= y_1y_2^{-1}, & \beta\rb{\diag\rb{y_1, y_2, y_1^{-1}, y_2^{-1}}} &= y_2^2.
\ea
Then $\Psi^+ = \cb{\alpha, \beta, \alpha+\beta, 2\alpha+\beta}$ is a set of positive roots. We denote by $s_\alpha$ and $s_\beta$ the simple reflections in the hyperplane orthogonal to $\alpha$ and $\beta$ respectively. Then the Weyl group of $G$ with respect to $T$ is given by
\ba
W = \cb{1, s_\alpha, s_\beta, s_\alpha s_\beta, s_\beta s_\alpha, s_\alpha s_\beta s_\alpha, s_\beta s_\alpha s_\beta, s_\alpha s_\beta s_\alpha s_\beta}.
\ea
We fix once and for all representatives for $s_\alpha$ and $s_\beta$:
\ba
s_\alpha &= \bp &1\\-1\\&&&1\\&&-1\ep, & s_\beta &= \bp 1\\&&&1\\&&1\\&-1\ep. 
\ea
We also denote the long Weyl element $s_\alpha s_\beta s_\alpha s_\beta$ by $w_0$. Characters of $U\rb{\Q_p}$ trivial on $U\rb{\Z_p}$ are given by $\psi_{m_1, m_2}$ for $m_1, m_2\in\Z$, where
\begin{equation}\label{eq:psi4_def}
\psi_{m_1, m_2} \bp 1& x_1&*&*\\&1&*&x_2\\&&1\\&&-x_1&1\ep =\e\rb{m_1x_1+m_2x_2},
\end{equation}
where $\e:\Q_p/\Z_p \to\C^\times$ is the standard additive character satisfying $\e(p^{-r}) = e^{2\pi i/p^r}$. For $w\in W$, $r,s\in\Z$, we set
\begin{equation}\label{eq:nwrs}
n_{w, r, s} := \diag\rb{p^{-r}, p^{r-s}, p^r, p^{s-r}} w \in N\rb{\Q_p}.
\end{equation}
The exact conditions $r,s$ have to satisfy for $X(n_{w,r,s})$ to be nonempty are given in \Cref{section:Sp4Kloosterman}, but in general we require $r,s\geq 0$. By counting the number of terms in the Kloosterman sum \cite[Theorem 0.3]{DR1998}, we obtain a trivial bound
\ba
\vb{\Kl_p\rb{n_{w,r,s}, \psi, \psi'}} \leq p^{r+s}.
\ea
Now we state the main results of the paper. The first result concerns non-trivial bounds for local $\Sp(4)$ Kloosterman sums.
\begin{thm}\label{thm:local_bound}
Let $\psi = \psi_{m_1,m_2}$, $\psi' = \psi_{n_1,n_2}$ be characters of $U(\Q_p)/U(\Z_p)$, and $r,s$ be non-negative integers. Then we have
\begin{equation*}
\scalebox{0.92}{$
\begin{aligned}
\Kl_p(n_{\id,r,s}, \psi, \psi') &= 1 & &\text{if } r=s=0,\\
\vb{\Kl_p(n_{s_\alpha,r,s}, \psi, \psi')} &\ll p^{r/2}(m_1,n_1,p^r)^{1/2} & &\text{if } s=0,\\
\vb{\Kl_p(n_{s_\beta,r,s}, \psi, \psi')} &\ll p^{s/2}(m_2,n_2,p^s)^{1/2} & &\text{if } r=0,\\
\vb{\Kl_p\rb{n_{s_\alpha s_\beta, r, s}, \psi, \psi'}} &\ll \min\cb{p^{2s} \rb{m_1, p^{r-s}}, p^r \rb{m_2, p^s}^{1/2} \rb{n_2, p^s}^{1/2}} & &\text{if } s\leq r,\\
\vb{\Kl_p\rb{n_{s_\beta s_\alpha, r, s}, \psi, \psi'}} &\ll \min\cb{p^{3r} \rb{m_2, p^{s-2r}}, p^s \rb{m_1, n_1, p^r}} & &\text{if } 2r\leq s,\\
\vb{\Kl_p\rb{n_{s_\alpha s_\beta s_\alpha, r,s}, \psi, \psi'}} &\ll \begin{cases} 
p^{\frac{r}{3} + \frac{2s}{3} + \frac{2}{3}\min\cb{\ord_p(m_1)+s, \ord_p(n_1)+r} + \frac{1}{3}\ord_p(m_2)}\\
p^{r+\min\cb{\ord_p(m_2), r+\ord_p(n_1)}} + p^{r+\min\cb{\frac{s}{2}+\ord_p(m_1), r-\frac{s}{2}+\ord_p(n_1)}}\\
p^{r+\min\cb{\ord_p(m_2), r+\ord_p(n_1)}}\end{cases} & &\hspace{-0.18cm}\begin{array}{l}\text{if } s\leq r,\\ \text{if } r<s<2r,\\ \text{if } s=2r,\end{array}\\
\vb{\Kl_p\rb{n_{s_\beta s_\alpha s_\beta,r,s}, \psi, \psi'}} &\ll \begin{cases}
p^{\frac{s}{2} + \frac{r}{2} + \frac{1}{2} \ord_p(m_1) + \frac{1}{2}\min\cb{2r+\ord_p(m_2), s+\ord_p(n_2)}},\\
p^{s-\frac{r}{2}+\frac{1}{2}\ord_p(m_1)+\frac{1}{2}\min\cb{2r+\ord_p(m_2), s+\ord_p(n_2)}} \\
p^{s+\min\cb{\ord_p(m_1), \ord_p(n_2)}}\end{cases}. & &\hspace{-0.18cm}\begin{array}{l}\text{if } r\leq s/2,\\ \text{if } s/2<r<s,\\ \text{if } r=s,\end{array}\\
\vb{\Kl_p\rb{n_{w_0, r, s}, \psi, \psi'}} &\ll \min\cb{p^{\frac{1}{2}\ord_p(m_1m_2)},p^{ \frac{1}{2}\ord_p(n_1n_2)}} \rb{s+1} p^{\frac{r}{2} + \frac{3s}{4} + \frac{1}{2}\min\cb{r,s}}.
\end{aligned}$}
\end{equation*}
Moreover, the Kloosterman sum $\Kl_p(n_{w,r,s},\psi,\psi')$ vanishes if the conditions on the right are not satisfied.
\end{thm}

Now we state the bounds for global $\Sp(4)$ Kloosterman sums. Recall that characters of $U(\Q)/U(\Z)$ are given by $\psi_{m_1,m_2}$ for $m_1,m_2\in\Z$, where
\ba
\psi_{m_1,m_2}\bp 1& x_1 & *&*\\ &1&*&x_2\\&&1\\&&-x_1&1\ep = \e\rb{m_1x_1+m_2x_2}.
\ea
For $w\in W$, $c_1,c_2\in\N$, let
\ba
n_w(c_1,c_2) := \diag\rb{c_1^{-1},c_1c_2^{-1},c_1,c_1^{-1}c_2} w \in N(\Q).
\ea
\begin{thm}\label{thm:global_bound}
Let $\psi = \psi_{m_1,m_2}$, $\psi' = \psi_{n_1,n_2}$ be characters of $U(\Q)/U(\Z)$. For every $\varepsilon>0$ and $c_1,c_2\in \N$, the following bounds hold:
\ba
\Kl(n_{\id}(c_1,c_2),\psi,\psi') &= 1 & &\text{if } c_1=c_2=1,\\
\vb{\Kl(n_{s_\alpha}(c_1,c_2),\psi,\psi')} &\ll_\varepsilon (m_1,n_1,c_1)^{1/2} c_1^{1/2+\varepsilon} & &\text{if } c_2=1,\\
\vb{\Kl(n_{s_\beta}(c_1,c_2),\psi,\psi')} &\ll_\varepsilon (m_2,n_2,c_2)^{1/2} c_2^{1/2+\varepsilon} & &\text{if } c_1=1,\\
\vb{\Kl(n_{s_\alpha s_\beta}(c_1,c_2),\psi,\psi')} &\ll_\varepsilon \rb{c_2^2(m_1,c_1/c_2),c_1(m_2,c_2)^{1/2}(n_2,c_2)^{1/2}}(c_1c_2)^\varepsilon & &\text{if } c_2\mid c_1,\\
\vb{\Kl(n_{s_\beta s_\alpha}(c_1,c_2),\psi,\psi')} &\ll_\varepsilon \rb{c_1^3(m_2,c_2/c_1^2),c_2(m_1,n_1,c_1)}(c_1c_2)^\varepsilon & &\text{if } c_1^2\mid c_2,\\
\vb{\Kl(n_{s_\alpha s_\beta s_\alpha}(c_1,c_2),\psi,\psi')} &\ll_\varepsilon (m_1,n_1,c_1)(m_2,c_2)(c_1,c_2)(c_1c_2)^{1/3+\varepsilon} & &\text{if } c_2 \mid c_1^2,\\
\vb{\Kl(n_{s_\beta s_\alpha s_\beta}(c_1,c_2),\psi,\psi')} &\ll_\varepsilon (m_1,c_1)(m_2,n_2,c_2)(c_1^2,c_2) c_1^{-1/2} c_2^{1/2} (c_1c_2)^\varepsilon & &\text{if } c_1\mid c_2,\\
\vb{\Kl(n_{w_0}(c_1,c_2),\psi,\psi')} &\ll_\varepsilon (m_1m_2,n_1n_2,c_1c_2)^{1/2} (c_1,c_2)^{1/2} c_1^{1/2} c_2^{3/4} (c_1c_2)^\varepsilon. 
\ea
Moreover, the Kloosterman sum $\Kl(n_w(c_1,c_2),\psi,\psi')$ vanishes if the conditions on the right are not satisfied.
\end{thm}

\Cref{thm:local_bound,thm:global_bound} are proved in \Cref{section:Sp4Kloosterman_bound}. We develop a stratification of $\Sp(2r)$ Kloosterman sums in \Cref{section:stratification}, generalising the stratification of $\GL(r)$ Kloosterman sums introduced by Stevens \cite{Stevens1987}. Let 
\ba
\mc T := \cbm{\bp A\\ & cA^{-1} \ep \in \GL\rb{2r, \Z_p}}{A = \diag \rb{a_1, a_2, \cdots, a_r}, a_1, \cdots, a_r, c \in \Z_p^\times}.
\ea
be a subgroup of diagonal matrices. We will show that for $n\in N\rb{\Q_p}$, there is a group action $\mc T\times X(n) \to X(n)$ sending $(t, x)$ to $txs^{-1}$, where $s = n^{-1}tn$. The Kloosterman sum, as a sum over $X(n)$, can then be partitioned into sums over $\mc T$-orbits, in \Cref{thm:Stevens4.10}. The summands are then evaluated using results of Adolphson and Sperber \cite{AS1989} for multi-dimensional exponential sums of Laurent polynomials, as well as the $p$-adic stationary phase method for higher prime powers.

We give some brief comments on the bounds obtained here. As we shall see in \Cref{section:Sp4Kloosterman}, the Kloosterman sums corresponding $w=s_\alpha, s_\beta$ are just classical Kloosterman sums, and the bounds given here are just the optimal bounds for classical Kloosterman sums. The Kloosterman sums corresponding to $w=s_\alpha s_\beta, s_\beta s_\alpha$ can be expressed in terms of exponential sums of Laurent polynomials, whose optimal bounds are also well understood. Meanwhile, the bounds for $w=s_\alpha s_\beta s_\alpha, s_\beta s_\alpha s_\beta, w_0$, obtained using the stratification technique, are believed to be not optimal, since only the cancellations within individual $\mc T$-orbits are considered. In particular, we expect square-root cancellation for Kloosterman sums $\Kl_p(n_{w_0,r,s},\psi_{m_1,m_2},\psi_{n_1,n_2})$ when $m_1,m_2,n_1,n_2$ are coprime to $p$, but this is beyond reach of current methods.

Let $F: T\rb{\R^+} \to \C$ be a smooth function with rapid decay. Let $\psi, \psi'$ be characters of $U(\R)$ trivial on $U(\Z)$. For $g = uy\in G/K$, where $u \in U(\R)$, $y \in T\rb{\R^+}$, define $\mc F_\psi (g) := \psi\rb{\eta} F\rb{y}$. The symplectic Poincaré series associated to $F$ is given by
\ba
P_\psi (g) = \sum\limits_{\gamma\in P_0 \cap \Gamma \bs \Gamma} \mc F_\psi (\gamma g),
\ea
where $\Gamma = \Sp(2r, \Z)$, and $P_0$ is the standard minimal parabolic subgroup of $G$. The $\psi'$-th Fourier coefficient of $P_\psi(g)$ is given by
\ba
P_{\psi, \psi'} (g) = &\int_{U(\Z) \bs U(\R)} P_\psi\rb{ug}  \ol{\psi'} (u) du.
\ea

We compute in \Cref{section:sym_Poincare} the Fourier coefficients $P_{\psi, \psi'} (g)$ of the Poincaré series $P_\psi(g)$, in terms of auxiliary Kloosterman sums, which are also defined in \Cref{section:sym_Poincare}. The bounds given in \Cref{thm:local_bound} also apply to these auxiliary Kloosterman sums, via \Cref{prp:auxKl}.

\section*{Acknowledgement}
The author would like to thank Valentin Blomer for his guidance on the project.

\section{Stratification of symplectic Kloosterman sums} \label{section:stratification}

Consider the subgroup of diagonal matrices
\ba
\mc T := \cbm{\bp A\\ & cA^{-1} \ep \in \GL\rb{2r, \Z_p}}{A = \diag \rb{a_1, a_2, \cdots, a_r}, a_1, \cdots, a_r, c \in \Z_p^\times}.
\ea
Note that in general elements of $\mc T$ are not symplectic. 

\begin{lem}\label{lem:sympconj}
Let $u \in U\rb{\Q_p}$, and $t\in \mc T$. Then $tut^{-1}\in U\rb{\Q_p}$.
\end{lem}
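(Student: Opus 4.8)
The plan is to verify the two conditions defining membership in $U\rb{\Q_p}$ separately: that $tut^{-1}$ has the block-triangular shape displayed in the definition of $U$, and that it is symplectic. The first will be immediate because $t$ is a diagonal matrix; the second is where the (mild) work lies, since $t$ itself need not be symplectic.

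For the shape: I would let $\widetilde U \le \GL\rb{2r}$ be the group of matrices of block form $\bp A & B\\ 0 & D\ep$ with $A$ upper-triangular unipotent, $D$ lower-triangular unipotent, and $B$ arbitrary (this is a subgroup of $\GL(2r)$), and record that from the explicit matrix description of $U$ one reads off $U\rb{\Q_p} = \Sp\rb{2r,\Q_p} \cap \widetilde U\rb{\Q_p}$. Conjugation by a diagonal element $\delta\in\GL\rb{2r}$ rescales the $(i,j)$-entry of a matrix by $\delta_i/\delta_j$, hence preserves the zero block and the unipotent diagonal blocks cutting out $\widetilde U$; since every $t\in\mc T$ is diagonal, this gives $tut^{-1}\in\widetilde U\rb{\Q_p}$ at once.

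For symplecticity: writing $t = \bp A_0\\ & cA_0^{-1}\ep$ with $A_0 = \diag\rb{a_1,\dots,a_r}$ and $c\in\Z_p^\times$, a direct block multiplication gives the key identity $t^T J t = cJ$; applying it to $t^{-1}\in\mc T$ (or conjugating it by $t^{-1}$) yields $\rb{t^{-1}}^T J t^{-1} = c^{-1}J$. Then, using $u^T J u = J$,
\ba
\rb{tut^{-1}}^T J \rb{tut^{-1}} &= \rb{t^{-1}}^T u^T \rb{t^T J t}\, u\, t^{-1} = c\, \rb{t^{-1}}^T \rb{u^T J u}\, t^{-1}\\
&= c\, \rb{t^{-1}}^T J\, t^{-1} = J,
\ea
so $tut^{-1}\in\Sp\rb{2r,\Q_p}$, and combining the two preceding paragraphs $tut^{-1}\in\Sp\rb{2r,\Q_p}\cap\widetilde U\rb{\Q_p} = U\rb{\Q_p}$.

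I do not expect a genuine obstacle here; the one point worth flagging is conceptual rather than technical. Although elements of $\mc T$ are not symplectic, they are symplectic \emph{similitudes} with multiplier $c\in\Z_p^\times$, i.e. $t^T J t = cJ$, and it is exactly this property — not membership in $\Sp\rb{2r}$ — that makes conjugation by $t$ preserve $U$. This also explains the shape imposed in the definition of $\mc T$: an arbitrary diagonal matrix of $\GL\rb{2r,\Z_p}$ need not be a symplectic similitude, whereas the constraint $\bp A_0\\ & cA_0^{-1}\ep$ guarantees it.
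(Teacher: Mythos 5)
Your proof is correct, and it fills in exactly the details that the paper omits (the paper's proof of this lemma is a single word, ``Trivial''). The two-step structure — diagonal conjugation preserves the block-unipotent shape, and the similitude identity $t^T J t = cJ$ rescues symplecticity despite $t\notin\Sp(2r)$ — is the natural unpacking of that ``trivial,'' and your closing remark that elements of $\mc T$ are symplectic similitudes is precisely the conceptual point that makes the lemma go through.
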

\begin{proof}
Trivial.
\end{proof}

\begin{lem}\label{lem:torusconj}
Let $n\in N\rb{\Q_p}$, and $t \in \mc T$. Then $n^{-1} t n \in \mc T$.
\end{lem}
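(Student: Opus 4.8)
The plan is to strip off the toral part of $n$ and thereby reduce the statement to an elementary computation with diagonal matrices. First I would fix a set of representatives of $W$ inside $N_G(T) \cap \GL\rb{2r,\Z}$ consisting of signed permutation matrices (exactly one nonzero entry, equal to $\pm 1$, in each row and column), and write $n = w t_0$ with $w$ such a representative and $t_0 \in T\rb{\Q_p}$. Since $t$ is diagonal and $w$ is monomial, $w^{-1} t w$ is again diagonal, and because diagonal matrices commute the factor $t_0$ cancels, so $n^{-1} t n = t_0^{-1} \rb{w^{-1} t w} t_0 = w^{-1} t w$. This already shows that the possibly non-integral torus part $t_0$ is irrelevant, and it remains to check $w^{-1} t w \in \mc T$ for each representative $w$.

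For that step I would argue as follows. Write $t = \diag\rb{a_1, \dots, a_r, c a_1^{-1}, \dots, c a_r^{-1}}$ with $a_1, \dots, a_r, c \in \Z_p^\times$, and index the $2r$ coordinates by $\cb{1, \dots, r} \cup \cb{\ol 1, \dots, \ol r}$ with $\ol i := i + r$, so that the diagonal entries $d_\bullet$ of $t$ lie in $\Z_p^\times$ and satisfy $d_i d_{\ol i} = c$ for all $i$. Conjugation by $w$ permutes these entries according to the underlying permutation $\sigma$ of the index set (the signs cancel in the conjugation). From the defining relation $w^T J w = J$ of $\Sp\rb{2r}$ one gets $J_{\sigma(i)\sigma(j)} = \pm J_{ij}$, and since $J_{ij} \neq 0$ exactly when $\cb{i,j} = \cb{k, \ol k}$ for some $k$, the permutation $\sigma$ carries each pair $\cb{i, \ol i}$ onto another such pair, possibly interchanging its two elements. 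Hence the diagonal entries $d'_\bullet$ of $w^{-1} t w$ again lie in $\Z_p^\times$ and still satisfy $d'_j d'_{\ol j} = c$ for all $j$, so $w^{-1} t w = \bp A' \\ & c (A')^{-1} \ep$ with $A' = \diag\rb{d'_1, \dots, d'_r}$ diagonal with unit entries and $c \in \Z_p^\times$, which is exactly the condition for membership in $\mc T$.

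A cleaner way to package the structural input — and the one I would probably present — is to observe that $\mc T$ is precisely the group of $\Z_p$-points of the standard diagonal maximal torus $\widetilde T$ of the symplectic similitude group $\operatorname{GSp}\rb{2r} \supset \Sp\rb{2r}$, and that $\widetilde T = Z\rb{\operatorname{GSp}\rb{2r}} \cdot T$; since $N_G(T)$ lies in $\operatorname{GSp}\rb{2r}$ and normalizes both the center $Z\rb{\operatorname{GSp}\rb{2r}}$ and the torus $T$, it normalizes $\widetilde T$, and combined with the integrality observation of the first paragraph this gives $n^{-1} t n \in \widetilde T\rb{\Z_p} = \mc T$. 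The only part of the argument carrying any genuine content is the claim that conjugation by the Weyl representative respects the coordinate pairing $\cb{i, \ol i}$ — equivalently, that $N_G(T)$ normalizes $\widetilde T$ — and this is an immediate consequence of the equation defining $\Sp\rb{2r}$; the remainder is routine bookkeeping with diagonal matrices.
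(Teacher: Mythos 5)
Your proof is correct and follows essentially the same route as the paper: factor $n = w t_0$ with $w$ a Weyl representative and $t_0\in T\rb{\Q_p}$, note that $t_0$ cancels since diagonal matrices commute, and verify that conjugation by $w$ preserves the shape $\bp A\\& cA^{-1}\ep$. You supply more detail than the paper at the last step --- deriving from $w^T J w = J$ that the underlying permutation preserves the index pairing $\cb{i,\ol i}$, which the paper simply asserts --- and the $\operatorname{GSp}$ normalizer reformulation at the end is an attractive conceptual repackaging of the same observation.
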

\begin{proof}
Suppose $t = \diag\rb{a_1, \cdots, a_r, ca_1^{-1}, \cdots, ca_r^{-1}}$. Then in general $n^{-1} t n$ has the form
\ba
n^{-1}tn = \diag\rb{\tau_{\sigma(1)} \rb{a_{\sigma(1)}}, \cdots, \tau_{\sigma(r)} \rb{a_{\sigma(r)}}, \tau_{\sigma(1)} \rb{ca_{\sigma(1)}^{-1}}, \cdots, \tau_{\sigma(r)} \rb{ca_{\sigma(r)}^{-1}}},
\ea
where $\sigma$ is a permutation of $\cb{1,\cdots, r}$, and $\tau_i: \cb{a_i, ca_i^{-1}} \to \cb{a_i, ca_i^{-1}}$ are permutations for $i = 1,\cdots, r$. Since $a_i = c\rb{ca_i^{-1}}^{-1}$, we see that 
\begin{equation*}
n^{-1}tn = \diag\rb{\tau_{\sigma(1)} \rb{a_{\sigma(1)}}, \cdots, \tau_{\sigma(r)} \rb{a_{\sigma(r)}}, c\rb{\tau_{\sigma(1)} \rb{a_{\sigma(1)}}}^{-1}, \cdots, c\rb{\tau_{\sigma(r)} \rb{a_{\sigma(r)}}}^{-1}} \in \mc T.\qedhere
\end{equation*}
\end{proof}

Let $x = unu' \in C(n)$, and $t \in \mc T$. By \Cref{lem:torusconj}, $s := n^{-1}tn \in \mc T$. By \Cref{lem:sympconj}, we see that 
\ba
t*x :=  txs^{-1} = \rb{tut^{-1}} n \rb{su's^{-1}} \in U\rb{\Q_p} n U\rb{\Q_p} \cap G\rb{\Z_p} = C(n). 
\ea
As conjugation by $t$ and $s$ preserves $U\rb{\Z_p}$ and $U_n\rb{\Z_p}$, this induces an action on $X(n)= U(\Z_p) \bs C(n) / U_n(\Z_p)$:
\ba
\mc T \times X(n) &\to X(n), & (t, x) &\mapsto t * x.
\ea

For characters $\psi: U\rb{\Q_p}/U\rb{\Z_p} \to \C^\times$, $\psi': U_n\rb{\Q_p}/U_n\rb{\Z_p} \to \C^\times$, decomposition of $X(n)$ into $\mc T$-orbits gives a decomposition of Kloosterman sums:
\ba
\Kl_p\rb{n, \psi, \psi} = \sum\limits_{x\in \mc T\bs X(n)} \sum\limits_{y \in \mc T * x} \psi\rb{u(y)} \psi' \rb{u'(y)}. 
\ea

Let $\alpha_i = e_i - e_{i+1}$, $1 \leq i \leq r-1$, and $\alpha_r = 2e_r$ be the simple roots of $T$ in $G$. Denote $\Delta = \cb{\alpha_1, \cdots, \alpha_r}$, and $\Delta_w = \cbm{\alpha \in \Delta}{w(\alpha)<0}$. Let $U_{\alpha_i}(\Q_p) \sbe U(\Q_p)$ be the root subgroup corresponding to $\alpha_i$. For $u\in U(\Q_p)$, we also use $\alpha_i$ to denote the canonical projection map
\ba
\alpha_i: U(\Q_p) \to U_{\alpha_i}(\Q_p) \simeq \Q_p.
\ea
Explicitly, for $u = (u_{ij}) \in U(\Q_p)$, the projection maps are given by
\ba
\alpha_i(u) &= u_{i,i+1}, & &1\leq i\leq r-1,\\
\alpha_r(u) &= u_{r,2r}.
\ea

Characters of $U\rb{\Q_p}/U\rb{\Z_p}$ have the form
\ba
\psi(u) = \psi_{n_1,\cdots, n_r}(u) &:= \prod\limits_{i=1}^r\e\rb{n_i \alpha_i(u)}, & &n_i\in\Z,
\ea
where $\e: \Q_p/\Z_p\to\C^\times$ is the standard additive character. For $x = u(x) n u'(x)$, define projections
\ba
\kappa_i (x) &:=\alpha_i(u(x)), & \kappa'_i (x) &:= \alpha_i(u'(x)), & 1\leq i\leq r.
\ea
Note that $\kappa'_i(x) = 0$ unless $\alpha_i \in \Delta_w(n)$. For $u\in U(\Q_p)$, and $t = \diag\rb{a_1, \cdots, a_r, ca_1^{-1}, \cdots, ca_r^{-1}} \in \mc T$, we compute that
\ba
\alpha_i(tut^{-1}) &= a_ia_{i+1}^{-1} \alpha_i(u), & &1\leq i\leq r-1,\\
\alpha_r(tut^{-1}) &= c^{-1}a_r^2 \alpha_r(u).
\ea
Suppose $t = \diag(a_1, \cdots, a_r, ca_1^{-1}, \cdots, ca_r^{-1}) \in \mc T$, and $s = n^{-1}tn = \diag(a'_1, \cdots, a'_r, {ca'_1}^{-1}, \cdots {ca'_r}^{-1}) \in \mc T$. Note from the proof of \Cref{lem:torusconj} that we have the same $c$. Then we have
\begin{equation}\label{eq:kappa_i}
\begin{split}
\kappa_i \rb{t*x} &= a_ia_{i+1}^{-1} \kappa_i(x), \quad 1\leq i\leq r-1,\\
\kappa_r \rb{t*x} &= c^{-1}a_r^2 \kappa_r(x), 
\end{split}
\end{equation}
and
\begin{equation}\label{eq:kappap_i}
\begin{split}
\kappa'_i \rb{t*x} &= a'_i {a'_{i+1}}^{-1} \kappa'_i(x), \quad 1\leq i\leq r-1,\\
\kappa'_r \rb{t*x} &= c^{-1}{a'_r}^2 \kappa'_r(x).
\end{split}
\end{equation}
For $\ell\in\N$, we define
\ba
A_w(\ell) &:= (\Z/p^\ell \Z)^\Delta \times (\Z/p^\ell \Z)^{\Delta_w}.
\ea
and
\ba
V_w(\ell) &:= \cbm{(\lambda, \lambda') \in A_w(\ell)^\times}{\begin{array}{l} \exists t \in \mc T \text{ such that } \kappa_i(t*x) = \lambda_i \kappa_i(x), \kappa'_j(t*x) = \lambda'_j \kappa'_j(x)\\ \text{for } x\in X(n), \; 1\leq i, j\leq r, \; \alpha_j\in \Delta_w \end{array}}.
\ea
\begin{lem}
We have $\vb{V_w(\ell)} = \rb{p^\ell \rb{1-p^{-1}}}^r$.
\end{lem}
\begin{proof}
For every $\lambda \in \rb{(\Z/p^\ell\Z)^\times}^\Delta$, we can find $t\in \mc T$ such that $\kappa_i(t*x) = \lambda_i\kappa_i(x)$ for $1\leq i\leq r$. Using \eqref{eq:kappap_i}, we find a unique $\lambda' \in \rb{(\Z/p^\ell\Z)^\times}^{\Delta_w}$ such that $\kappa'_i(t*x) = \lambda'_i\kappa_i(x)$ for $1\leq j\leq r$ with $\alpha_j\in\Delta_w$, and it is straightforward to check that $\lambda'$ is independent of the choice of $t\in\mc T$. Therefore 
\ba
\vb{V_w(\ell)} = \vb{\rb{(\Z/p^\ell\Z)^\times}^\Delta} = \rb{p^\ell \rb{1-p^{-1}}}^r
\ea 
as claimed.
\end{proof}
For a character $\theta: A_w(\ell) \to \C^\times$, we define
\ba
S_w\rb{\theta; \ell} = \sum\limits_{v \in V_w(\ell)} \theta(v). 
\ea

\begin{thm}\label{thm:Stevens4.10}
Let $n\in N\rb{\Q_p}$, and suppose $\ell$ is large enough such that the matrix entries of $u(x), u'(x)$ lie in $p^{-\ell}\Z_p / \Z_p$ for every $x\in X(n)$. Let $\psi = \psi_{n_1, \cdots, n_r}: U\rb{\Q_p}/U\rb{\Z_p} \to \C^\times$ and $\psi' = \psi_{n'_1, \cdots, n'_r}|_{U_n\rb{\Q_p}}: U_n\rb{\Q_p}/ U_n\rb{\Z_p} \to \C^\times$ be characters. Define the character $\theta_x: A_w(\ell) \to \C^\times$ by
\ba
\theta_x (\lambda, \lambda') = \prod\limits_{i=1}^r\e\rb{\lambda_i n_i \kappa_i(x)} \prod\limits_{\substack{i=1\\ w(\alpha_i)<0}}^r\e\rb{\lambda'_i n'_i \kappa'_i(x)}.
\ea
Then
\ba
\Kl_p\rb{n, \psi, \psi'} = \rb{p^\ell\rb{1-p^{-1}}}^{-r} \sum\limits_{x\in \mc T\bs X(n)} \mf N(x) S_w\rb{\theta_x; \ell},
\ea
where $\mf N(x) = \vb{\mc T*x}$ is the size of $\mc T$-orbit of $x\in X(n)$. 
\end{thm}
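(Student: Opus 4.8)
The plan is to group the Kloosterman sum by its $\mc T$-orbits in $X(n)$ (a finite set, since $\Kl_p\rb{n,\psi,\psi'}$ is a finite sum); choosing orbit representatives,
\ba
\Kl_p\rb{n,\psi,\psi'}=\sum_{x\in\mc T\bs X(n)}\;\sum_{y\in\mc T*x}\psi\rb{u(y)}\psi'\rb{u'(y)}.
\ea
As $\psi=\psi_{n_1,\dots,n_r}$ depends on its argument only through the simple-root entries $\kappa_i$, and $\psi'=\psi_{n'_1,\dots,n'_r}|_{U_n\rb{\Q_p}}$ only through the $\kappa'_i$ with $\alpha_i\in\Delta_w$ (recall $\kappa'_i\equiv 0$ otherwise), we have $\psi\rb{u(y)}\psi'\rb{u'(y)}=h(y)$, where $h(y):=\prod_{i=1}^{r}\e\rb{n_i\kappa_i(y)}\prod_{w(\alpha_i)<0}\e\rb{n'_i\kappa'_i(y)}$; it therefore suffices to prove, for a fixed orbit, the identity $\sum_{y\in\mc T*x}h(y)=\rb{p^\ell\rb{1-p^{-1}}}^{-r}\mf N(x)\,S_w\rb{\theta_x;\ell}$.

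The first step is to realise the $\mc T$-action on $X(n)$ through a suitable finite quotient of $\mc T$. Identify $\mc T\cong\rb{\Z_p^\times}^{r+1}$ via $t\lra\rb{a_1,\dots,a_r,c}$. Since $X(n)$ is finite and the action is continuous in $t$, its kernel is an open subgroup of $\mc T$, hence contains $\mc T(L):=\cbm{t\in\mc T}{a_1,\dots,a_r,c\equiv1\bmod p^L}$ for all large $L$; fix such an $L\geq\ell$ and set $\ol{\mc T}:=\mc T/\mc T(L)\cong\rb{\rb{\Z/p^L\Z}^\times}^{r+1}$, through which the action factors. The displayed formulas for $\kappa_i(t*x)$ and $\kappa'_i(t*x)$ exhibit $t\mapsto\lambda(t)\times\lambda'(t)$ --- the tuple of Laurent monomials in $a_1,\dots,a_r,c$ occurring there, reduced modulo $p^\ell$ --- as a group homomorphism $\mc T\to\rb{\rb{\Z/p^\ell\Z}^\times}^{r+\vb{\Delta_w}}$ whose image is exactly $V_w(\ell)$; as $L\geq\ell$ it annihilates $\mc T(L)$, hence descends to a surjection $\Lambda\colon\ol{\mc T}\twoheadrightarrow V_w(\ell)$, and by the first isomorphism theorem and the count $\vb{V_w(\ell)}=\rb{p^\ell\rb{1-p^{-1}}}^r$ one has $\vb{\ker\Lambda}=\vb{\ol{\mc T}}/\vb{V_w(\ell)}$.

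The second step is to record the two fibrations of the finite group $\ol{\mc T}$ and how $h$ is transported along them. The orbit map $f\colon\ol{\mc T}\to X(n)$, $t\mapsto t*x$ (well defined because the action factors through $\ol{\mc T}$), has all fibres of size $\vb{\Stab_{\ol{\mc T}}(x)}=\vb{\ol{\mc T}}/\mf N(x)$, since $\mf N(x)=\vb{\ol{\mc T}*x}$; and the homomorphism $\Lambda$ has all fibres of size $\vb{\ker\Lambda}$. Substituting $\kappa_i(t*x)=\lambda_i(t)\kappa_i(x)$, $\kappa'_i(t*x)=\lambda'_i(t)\kappa'_i(x)$ into the definitions of $h$ and $\theta_x$ --- and using $n_i\kappa_i(x),\,n'_i\kappa'_i(x)\in p^{-\ell}\Z_p/\Z_p$, which makes $\theta_x$ well defined on $A_w(\ell)$ and makes $\e\rb{n_i\lambda_i(t)\kappa_i(x)}$ depend on $\lambda_i(t)$ only modulo $p^\ell$ --- gives the key identity $h\rb{f(t)}=\theta_x\rb{\Lambda(t)}$ for every $t\in\ol{\mc T}$.

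The conclusion is a double count of $\sum_{t\in\ol{\mc T}}h\rb{f(t)}$. Summing along the fibres of $f$ gives $\rb{\vb{\ol{\mc T}}/\mf N(x)}\sum_{y\in\mc T*x}h(y)$; summing along the fibres of $\Lambda$ and using $h\circ f=\theta_x\circ\Lambda$ gives $\vb{\ker\Lambda}\sum_{v\in V_w(\ell)}\theta_x(v)=\vb{\ker\Lambda}\,S_w\rb{\theta_x;\ell}$. Equating the two expressions, $\sum_{y\in\mc T*x}h(y)=\rb{\mf N(x)\vb{\ker\Lambda}/\vb{\ol{\mc T}}}S_w\rb{\theta_x;\ell}=\rb{\mf N(x)/\vb{V_w(\ell)}}S_w\rb{\theta_x;\ell}=\rb{p^\ell\rb{1-p^{-1}}}^{-r}\mf N(x)\,S_w\rb{\theta_x;\ell}$, and summing over $x\in\mc T\bs X(n)$ proves the theorem. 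I expect the delicate part to be the bookkeeping around $\ol{\mc T}$ and its two fibrations: one must check that the $\mc T$-action genuinely factors through a finite group (so ``fibre size'' is meaningful), that $\Lambda$ is a surjective homomorphism onto $V_w(\ell)$ which descends once $L\geq\ell$, and that $h\circ f$ and $\theta_x\circ\Lambda$ agree identically --- and this is precisely where \Cref{lem:sympconj}, \Cref{lem:torusconj}, the hypothesis on $\ell$, and the cardinality of $V_w(\ell)$ all enter. No single point is deep, but the final count is fragile to any slip here.
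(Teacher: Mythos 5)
Your proof is correct, and it is at heart the same averaging-over-$V_w(\ell)$ argument the paper uses, just repackaged. The paper inserts $\vb{V_w(\ell)}^{-1}\sum_{\lambda\times\lambda'}$ directly into the orbit sum: for each fixed $\lambda$ the map $y\mapsto t_\lambda*y$ is a bijection of the orbit, so $\sum_y h_\lambda(y)=\sum_y h(y)$ (justifying the insertion), and for each fixed $y=t_\mu*x$ the reparametrisation $\lambda\mapsto\lambda\mu$ is a bijection of $V_w(\ell)$, so $\sum_\lambda h_\lambda(y)=S_w(\theta_x;\ell)$ is $y$-independent (giving the factor $\mf N(x)$). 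Your version does the same computation as a double count of $\sum_{t\in\ol{\mc T}}h(f(t))$ along the two fibrations $f:\ol{\mc T}\to\mc T*x$ and $\Lambda:\ol{\mc T}\twoheadrightarrow V_w(\ell)$. The two organisations are logically equivalent, and your key identity $h\circ f=\theta_x\circ\Lambda$ is exactly what makes the paper's reparametrisation work. What your framing buys is explicitness about two points the paper leaves tacit: that the $\mc T$-action on the finite set $X(n)$ factors through a finite quotient $\ol{\mc T}$ (so orbit and fibre sizes are meaningful cardinals), and that $V_w(\ell)$ is precisely the image of a group homomorphism from $\mc T$ — which is also the cleanest way to see why $\vb{V_w(\ell)}=(p^\ell(1-p^{-1}))^r$ is constant across orbits. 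A minor stylistic remark: your appeal to the first isomorphism theorem to get $\vb{\ker\Lambda}=\vb{\ol{\mc T}}/\vb{V_w(\ell)}$ is fine, but you could avoid introducing $\ol{\mc T}$ and $L$ altogether by running the paper's two bijection arguments directly on the orbit and on $V_w(\ell)$; both routes use the same hypothesis on $\ell$ and the same Lemmas 2.1--2.2, so there is no real gain in generality either way.
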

\begin{proof}
We rewrite the Kloosterman sum
\ba
\Kl_p\rb{n, \psi, \psi'} = &\sum\limits_{x\in \mc T\bs X(n)} \sum\limits_{y \in \mc T * x} \psi\rb{u(y)} \psi' \rb{u'(y)}\\
= & \sum\limits_{x\in \mc T\bs X(n)} \sum\limits_{y \in \mc T * x} \prod\limits_{i=1}^r\e\rb{n_i \kappa_i(y)} \prod\limits_{\substack{i=1\\ w(\alpha_i)<0}}^r\e\rb{n'_i \kappa'_i(y)}.
\ea
For $(\lambda,\lambda')\in V_w(\ell)$, we can find $t\in \mc T$ such that $\kappa_i(t*x) = \lambda_i\kappa_i(x)$, $\kappa'_j(t*x) = \lambda'_i\kappa'_j(x)$ for $x\in X(n)$, $1\leq i,j\leq r$, $w(\alpha_j)<0$. Hence
\ba
\sum\limits_{y \in \mc T * x} \prod\limits_{i=1}^r\e\rb{\lambda_i n_i \kappa_i(y)} \prod\limits_{\substack{i=1\\ w(\alpha_i)<0}}^r\e\rb{\lambda'_i n'_i \kappa'_i(y)}
&= \sum\limits_{y \in \mc T * x} \prod\limits_{i=1}^r\e\rb{n_i \kappa_i(t*y)} \prod\limits_{\substack{i=1\\ w(\alpha_i)<0}}^r\e\rb{n'_i \kappa'_i(t*y)}\\
&= \sum\limits_{y \in \mc T * x} \prod\limits_{i=1}^r\e\rb{n_i \kappa_i(y)} \prod\limits_{\substack{i=1\\ w(\alpha_i)<0}}^r\e\rb{n'_i \kappa'_i(y)}.
\ea
Summing over $V_w(\ell)$, we have
\ba
\vb{V_w(\ell)} \Kl_p(n,\psi,\psi') &= \sum\limits_{x\in \mc T\bs X(n)} \sum\limits_{y \in \mc T * x} \sum\limits_{(\lambda,\lambda') \in V_w(\ell)} \prod\limits_{i=1}^r\e\rb{\lambda_i n_i \kappa_i(y)} \prod\limits_{\substack{i=1\\ w(\alpha_i)<0}}^r\e\rb{\lambda'_i n'_i \kappa'_i(y)}\\
&= \sum\limits_{x\in \mc T\bs X(n)} \mf N(x) \sum\limits_{(\lambda,\lambda') \in V_w(\ell)} \prod\limits_{i=1}^r\e\rb{\lambda_i n_i \kappa_i(y)}\prod\limits_{\substack{i=1\\ w(\alpha_i)<0}}^r\e\rb{\lambda'_i n'_i \kappa'_i(y)}\\
&= \sum\limits_{x\in \mc T\bs X(n)} \mf N(x) S_w\rb{\theta_x; \ell}.
\ea
Dividing both sides by $\vb{V_w(\ell)}$ yields the statement.
\end{proof}

\section{$\Sp(4)$ Kloosterman sums} \label{section:Sp4Kloosterman}

Now we give explicit formulations for Kloosterman sums for $G = \Sp\rb{4, \Q_p}$, classified by the image $w(n)$ of the projection onto $W$. Fix $\psi = \psi_{m_1, m_2}$, $\psi' = \psi_{n_1, n_2}$, with $\psi_{m_1, m_2}$, $\psi_{n_1, n_2}$ as in \eqref{eq:psi4_def}.

\begin{prp}\label{prp:Stevens3.2}
\cite[Theorem 3.2]{Stevens1987} Let $n\in N\rb{\Q_p}$, and $\psi: U\rb{\Q_p}/U\rb{\Z_p} \to \C^\times$, $\psi': U_n\rb{\Q_p}/ U_n\rb{\Z_p}\to \C^\times$ be characters. If $t\in T\rb{\Z_p^\times}$, then
\ba
\Kl_p\rb{tn, \psi, \psi'} &= \Kl_p\rb{n, \psi_t, \psi'},\\
\Kl_p\rb{nt^{-1}, \psi, \psi'} &= \Kl_p\rb{n, \psi, \psi'_t},
\ea
where $\psi_t (u) = \psi(tut^{-1})$. 
\end{prp}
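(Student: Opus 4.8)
The statement is a compatibility between the torus action on $N\rb{\Q_p}$ and the characters appearing in the Kloosterman sum, so the strategy is to trace through the definitions of $X(n)$, $u$, $u'$, and the sum $\Kl_p$, and to observe that left- (resp. right-) multiplication of $n$ by a $\Z_p^\times$-integral torus element sets up a bijection of the relevant double-coset spaces that merely twists one of the characters. I will prove the first identity $\Kl_p\rb{tn, \psi, \psi'} = \Kl_p\rb{n, \psi_t, \psi'}$ in detail; the second is entirely symmetric (multiply on the right instead of the left), and I would just say so.

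\textbf{Step 1: the cells match up.} Since $t \in T\rb{\Z_p}$ with $t \in \GL\rb{2r,\Z_p}$, we have $t\,G\rb{\Z_p} = G\rb{\Z_p}$, and also $U\rb{\Q_p}(tn)U\rb{\Q_p} = t\bigl(U\rb{\Q_p} n U\rb{\Q_p}\bigr)$ because $t$ normalises $U\rb{\Q_p}$ (it is a torus element). Hence $C(tn) = U\rb{\Q_p}(tn)U\rb{\Q_p}\cap G\rb{\Z_p} = t\,C(n)$. Moreover $w(tn) = w(n)$, so $U_{tn} = U_n$ and $U_{tn}\rb{\Z_p} = U_n\rb{\Z_p}$, and since $t\,U\rb{\Z_p} = U\rb{\Z_p}\,t$ (as $t$ normalises $U\rb{\Z_p}$), left multiplication by $t$ descends to a bijection
\ba
X(n) = U\rb{\Z_p}\bs C(n)/U_n\rb{\Z_p} \;\xrightarrow{\ \sim\ }\; U\rb{\Z_p}\bs C(tn)/U_{tn}\rb{\Z_p} = X(tn), \qquad x \mapsto t\cdot x.
\ea

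\textbf{Step 2: track the Bruhat coordinates.} If $x = u(x)\,n\,u'(x)$ is the Bruhat decomposition of a representative, then $t x = \bigl(t\,u(x)\,t^{-1}\bigr)(t n)\,u'(x)$, and $t\,u(x)\,t^{-1} \in U\rb{\Q_p}$. By uniqueness of the Bruhat decomposition this reads off $u(tx) = t\,u(x)\,t^{-1}$ and $u'(tx) = u'(x)$ (here one uses that $U_{tn} = U_n$ so that the right-hand factor still lives in the correct space $U\rb{\Q_p}/U_n\rb{\Z_p}$, and $t$ being integral does not move the class of $u'(x)$). Therefore, summing over $X(tn)$ via the bijection of Step 1,
\ba
\Kl_p\rb{tn,\psi,\psi'} = \sum_{x\in X(n)} \psi\bigl(t\,u(x)\,t^{-1}\bigr)\,\psi'\bigl(u'(x)\bigr) = \sum_{x\in X(n)} \psi_t\bigl(u(x)\bigr)\,\psi'\bigl(u'(x)\bigr) = \Kl_p\rb{n,\psi_t,\psi'},
\ea
where $\psi_t(u) := \psi(tut^{-1})$ is again a character of $U\rb{\Q_p}$ trivial on $U\rb{\Z_p}$ (since $t$ preserves $U\rb{\Z_p}$). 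The second identity follows by the mirror argument: right multiplication of $n$ by $t^{-1}$ gives $C(nt^{-1}) = C(n)\,t^{-1}$, a bijection $X(n)\to X(nt^{-1})$ by $x\mapsto x t^{-1}$, and the Bruhat coordinates transform by $u(xt^{-1}) = u(x)$, $u'(xt^{-1}) = t\,u'(x)\,t^{-1}$, yielding $\psi'\mapsto\psi'_t$.

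\textbf{Main obstacle.} Nothing here is deep; the only points requiring a little care are bookkeeping ones: checking that $w(tn) = w(n)$ so that the spaces $U_n\rb{\Z_p}$, $U_{tn}\rb{\Z_p}$ genuinely coincide (otherwise the twisted character $\psi'$ would have to be transported as well), and checking that $t$ normalising $U\rb{\Z_p}$ and $U_n\rb{\Z_p}$ is what makes the multiplication-by-$t$ map well defined on the double cosets — both of which are immediate from $t\in T\rb{\Z_p}\cap\GL\rb{2r,\Z_p}$ and the definition of the canonical projection $w\colon N\to W$. So I expect the proof to be short, with the verification of the transformation of the Bruhat coordinates via uniqueness being the one step to state carefully.
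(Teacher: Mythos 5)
The paper itself gives no proof here — the proposition is stated with a direct citation to Stevens \cite{Stevens1987}. Your argument is correct and is the standard one: the observations that $t\in T(\Z_p^\times)$ normalises $U(\Q_p)$ and $U(\Z_p)$, that $w(tn)=w(n)$ forces $U_{tn}=U_n$, and that left multiplication by $t$ therefore induces a bijection $X(n)\to X(tn)$ under which the Bruhat coordinates transform by $u\mapsto tut^{-1}$ and $u'\mapsto u'$ (so the sum picks up exactly the twist $\psi\mapsto\psi_t$), are precisely the content of Stevens' proof, and the second identity is, as you say, the mirror case via right multiplication.
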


By \Cref{prp:Stevens3.2}, it suffices to consider Kloosterman sums $\Kl_p\rb{n, \psi, \psi'}$ for $n$ such that entries of $n$ are powers of $p$, and $X(n)$ is nonempty. To express the Kloosterman sums, we express the coset representatives for $X(n)$ in terms of Plücker coordinates, which were introduced in \cite{BFH1990,Man2020}. For $g = (g_{ij}) \in G = \Sp(4,\Q_p)$, we define Plücker coordinates
\ba
v_i &:= g_{3,i}, & &1\leq i\leq 4,\\
v_{ij} &:= g_{3,i}g_{4,j}-g_{3,j}g_{4,i}, & &1\leq i<j\leq 4.
\ea
The Plücker coordinates satisfy the following relations:
\begin{equation}\label{eq:Pl_rel}
\begin{split}
v_iv_{jk}-v_jv_{ik}+v_kv_{ij} &= 0, \quad 1\leq i<j<k\leq 4,\\
v_{13}+v_{24} &= 0.
\end{split}
\end{equation}
Hence, we can associate to every $g\in G(\Q_p)$ its Plücker coordinates
\ba
v = v_g = (v_1, v_2, v_3, v_4; v_{12}, v_{13}, v_{14}, v_{23}, v_{24}, v_{34}) \in \Q_p^{10}.
\ea
It follows from the definition that if $g\in G(\Z_p)$, then the corresponding Plücker coordinates $v_g$ are integral, and satisfy the coprimality conditions
\begin{align}\label{eq:Pl_cp}
(v_1,v_2,v_3,v_4) &= 1, & (v_{12},v_{13},v_{14},v_{23},v_{24},v_{34}) &= 1.
\end{align}

It is proved in \cite{Man2020} that there is a bijection
\bdd
\cb{\text{cosets } U(\Q_p)\bs G(\Z_p)} \rar[leftrightarrow] & \cbm{v_g \in \Z_p^{10}}{v_g \text{ satisfies \eqref{eq:Pl_rel} and \eqref{eq:Pl_cp}}}. 
\edd
In particular, this means the coset representatives for $X(n)$ can be described using Plücker coordinates. For notational convenience, for $n\in N(\Q_p)$ we write $X^v(n)$ for a complete system of coset representatives of $X(n)$, in terms of Plücker coordinates.

Now we give explicit formulations for Kloosterman sums $\Kl_p(n,\psi,\psi')$. Note that by \Cref{prp:Stevens3.2}, it suffices to consider the case $n = n_{w,r,s}$, where $n_{w,r,s}$ is given as in \eqref{eq:nwrs}. By looking at the Plücker coordinates, one deduces that $X(n_{w,r,s})$ is nonempty only if $r,s\geq 0$. Explicit formulations for $X^v(n_{w,r,s})$ are obtained by unfolding the conditions \eqref{eq:Pl_rel} and \eqref{eq:Pl_cp}, and are given in \cite{Man2020}, and we shall use the results from there directly.

\ber
\item $n = n_{\id,r,s}$. Then $X(n_{\id,r,s})$ is empty unless $r=s=0$, where $X(n_{\id,0,0}) = \cb{I_4}$ is a singleton. So the Kloosterman sum is trivial:
\ba
\Kl_p\rb{n_{\id,0,0}, \psi, \psi'} = 1.
\ea

\item $n = n_{s_\alpha,r,s}$. Then $X(n_{s_\alpha,r,s})$ is nonempty when $s=0$. In this case we have
\ba
X^v(n_{s_\alpha,r,0}) = \cb{(0,0,v_3,p^r;0,0,0,0,0,1)}, 
\ea
where $v_3 \pmod{p^r}$ such that $(v_3,p^r) = 1$. The corresponding Kloosterman sum is actually a $\GL(2)$ Kloosterman sum:
\ba
\Kl_p\rb{n_{s_\alpha,r,0}, \psi, \psi'} = S\rb{m_1, n_1; p^r}. 
\ea

\item $n = n_{s_\beta,r,s}$. Then $X(n_{s_\beta,r,s})$ is nonempty when $r=0$. In this case we have
\ba
X^v(n_{s_\beta,0,s}) = \cb{(0,0,1,0;0,0,0,p^s,0,v_{34})},
\ea
where $v_{34}\pmod{p^s}$ such that $(v_{34},p^s) = 1$. The corresponding Kloosterman sum is actually a $\GL(2)$ Kloosterman sum:
\ba
\Kl_p\rb{n_{s_\beta,0,s}, \psi, \psi'} = S\rb{m_2, n_2; p^s}. 
\ea

\item $n = n_{s_\alpha s_\beta, r,s}$. Then $X(n_{s_\alpha s_\beta,r,s})$ is nonempty when $r\geq s$. Unfolding the conditions \eqref{eq:Pl_rel} and \eqref{eq:Pl_cp}, we compute
\ba
X^v(n_{s_\alpha s_\beta,r,s}) = \cb{(0,p^r,v_3,v_4;0,0,0,p^s,0,-v_4p^{s-r})},
\ea
where $v_3,v_4\pmod{p^r}$ such that $(v_4,p^r) = p^{r-s}$ and $(v_3,p^{r-s}) = 1$. We write $v_4 = v'_4 p^{r-s}$, so $(v'_4,p^s) = 1$. Bruhat decomposition gives
\ba
x = &\bp 1 & \beta_1 & \beta_2 & \beta_3\\ & 1 & \beta_4 & \beta_5\\ &&1\\&&-\beta_1 & 1\ep \bp &&&-p^{-r}\\ p^{r-s}\\ &p^r\\ &&p^{s-r}\ep \bp 1 &&& v_3p^{-r}\\ &1&v_3p^{-r}&v'_4 p^{-s}\\&&1\\&&&1\ep\\
= &\bp \beta_1 p^{r-s} & \beta_2p^r & \beta_2v_3 + \beta_3 p^{s-r} & \beta_2 v'_4 p^{r-s} + \beta_1 v_3 p^{-s} - p^{-r}\\ 
p^{r-s} & \beta_4 p^r & \beta_4 v_3 + \beta_5 p^{s-r} & \beta_4 v'_4 p^{r-s} + v_3 p^{-s}\\
 0 & p^r & v_3 & v'_4 p^{r-s}\\ 
0 & -\beta_1 p^r & -\beta_1 v_3 + p^{s-r}  & -\beta_1 v'_4 p^{r-s}\ep.
\ea
The entry $-\beta_1 v_3 + p^{s-r}$ being an integer says $\beta_1 \equiv \ol{v_3} p^{s-r} \pmod{1}$. The entry $\beta_4 v'_4 p^{r-s} + v_3 p^{-s}$ being an integer says $\beta_4 \equiv -\ol{v'_4} v_3 p^{-r} \pmod{p^{s-r}}$. Write $\beta_4 = -\ol{v'_4} v_3 p^{-r} + \gamma_4 p^{s-r}$ for some $\gamma_4\in\Z$. The entry $\beta_4 v_3 + \beta_5 p^{s-r}$ being an integer says
$\gamma_4 v_3 + \beta_5 \equiv \ol{v'_4} v_3^2 p^{-s} \pmod{p^{r-s}}$, hence $\beta_5 \equiv \ol{v'_4} v_3^2 p^{-s} \pmod{1}$. After writing $v_4$ for $v'_4$, the Kloosterman sum is given by 
\ba
\Kl_p \rb{n_{s_\alpha s_\beta, r, s}, \psi, \psi'} = \sum\limits_{\substack{v_4 \ppmod{p^s}\\ (v_4, p^s) = 1}} \sum\limits_{\substack{v_3 \ppmod{p^r}\\ (v_3, p^{r-s}) = 1}}\e\rb{\frac{m_1\ol{v_3}p^s}{p^r}}\e\rb{\frac{m_2 \ol{v_4} v_3^2 + n_2 v_4}{p^s}}.
\ea

\item $n = n_{s_\beta s_\alpha,r,s}$. Then $X(n_{s_\beta s_\alpha,r,s})$ is nonempty when $s\geq 2r$. Unfolding the conditions \eqref{eq:Pl_rel} and \eqref{eq:Pl_cp}, we compute
\ba
X^v(n_{s_\beta s_\alpha,r,s}) = \cb{(0,0,-v_{24}p^{r-s},p^r;0,-v_{24},p^s,-v_{24}p^{-s},v_{24},v_{34})},
\ea
where $v_{24}, v_{34}\pmod{p^s}$ such that $(v_{24},p^s) = p^{s-r}$ and $(v_{34}, p^{s-2r}) = 1$. We write $v_{24} = v'_{24} p^{s-r}$, so $(v'_{24},p^r) = 1$. Bruhat decomposition gives
\ba
x = &\bp 1 & \beta_1 & \beta_2 & \beta_3\\ & 1 & \beta_4 & \beta_5\\ &&1\\&&-\beta_1 & 1\ep \bp & p^{-r}\\ && p^{r-s}\\ &&& p^r\\ -p^{s-r}\ep \bp 1 & v'_{24} p^{-r} & v_{34} p^{-s}\\ &1\\&&1\\&&-v'_{24} p^{-r} & 1\ep\\
= &\bp -\beta_3 p^{s-r} & -\beta_3 v'_{24} p^{s-2r} + p^{-r} & -\beta_2 v'_{24} - \beta_3 v_{34} p^{-r} + \beta_1 p^{r-s} & \beta_2 p^r\\
-\beta_5 p^{s-r} & -\beta_5 v'_{24} p^{s-2r} & -\beta_4 v'_{24} - \beta_5 v_{34} p^{-r} + p^{r-s} & \beta_4 p^r\\
0 & 0 & -v'_{24} & p^r\\
-p^{s-r} & -v'_{24} p^{s-2r} & \beta_1 v'_{24} - v_{34} p^{-r} & -\beta_1 p^r\ep.
\ea
The entry $\beta_1 v'_{24} - v_{34} p^{-r}$ being an integer says $\beta_1 \equiv \ol{v'_{24}} v_{34} p^{-r} \pmod{1}$. The entry $\beta_4 p^r$ being an integer says $\beta_4 = \beta'_4 p^{-r}$ for some $\beta'_4\in\Z$. The entry $-\beta_4 v'_{24} - \beta_5 v_{34} p^{-r} + p^{r-s}$ being an integer says $\beta'_4 v'_{24} + \beta_5 v_{34} \equiv p^{2r-s} \pmod{p^r}$, hence $\beta_5 \equiv \ol{v_{34}} p^{2r-s} \pmod{1}$. After writing $v_{24}$ for $v'_{24}$, the Kloosterman sum is given by
\ba
\Kl_p \rb{n_{s_\beta s_\alpha, r, s}, \psi, \psi'} = \sum\limits_{\substack{v_{24} \ppmod{p^r}\\ (v_{24}, p^r) = 1}} \sum\limits_{\substack{v_{34}\ppmod{p^s}\\ (v_{34}, p^{s-2r}) = 1}}\e\rb{\frac{m_1 \ol{v_{24}} v_{34} + n_1 v_{24}}{p^r}}\e\rb{\frac{m_2 \ol{v_{34}} p^{2r}}{p^s}}.
\ea
\begin{rmk}
This Kloosterman sum is related to a $\GL(3)$ Kloosterman sum. Precisely, following the notation in \cite[(4.3)]{BFG1988}, we have
\ba
\Kl_p \rb{n_{s_\beta s_\alpha, r, s}, \psi, \psi'} = p^r S\rb{n_1, m_1, m_2; p^r, p^{s-r}}.
\ea
A non-trivial bound for $\Kl_p \rb{n_{s_\beta s_\alpha, r, s}, \psi, \psi'}$ then follows from Larsen \cite[Appendix]{BFG1988}.
\end{rmk}

\item $n=n_{s_\alpha s_\beta s_\alpha,r,s}$. Then $X(n_{s_\alpha s_\beta s_\alpha,r,s})$ is nonempty when $2r\geq s$. Unfolding the conditions \eqref{eq:Pl_rel} and \eqref{eq:Pl_cp}, we compute
\ba
X(n_{s_\alpha s_\beta s_\alpha, r,s}) = \cb{\rb{p^r,v_2,v_3,v_4; 0, -v_2 p^{s-r}, p^s, -v_2^2 p^{s-2r}, v_2 p^{s-r}, (p^rv_3+v_2v_4)p^{s-2r}}},
\ea
where $v_2, v_3, v_4 \pmod{p^r}$, such that $(v_2,v_3,v_4,p^r) = 1$, and if $d := (v_2, p^r)$, then $(d^2, p^rv_3+v_2v_4) = p^{2r-s}$. Let $d = p^{r-a}$. Then $a$ satisfies $s-r\leq a \leq s/2$. We write $v_2 = v'_2 p^{r-a}$, so $(v'_2, p^a) = 1$. Bruhat decomposition gives
\ba
x = &\bp 1 & \beta_1 & \beta_2 & \beta_3\\ & 1 & \beta_4 & \beta_5\\ &&1\\&&-\beta_1 & 1\ep \bp &&-p^{-r}\\&p^{r-s}\\p^r\\&&&p^{s-r}\ep \bp 1 & v'_2 p^{-a} & v_3 p^{-r} & v_4 p^{-r}\\ & 1 & v_4 p^{-r} \\ &&1\\&&-v'_2 p^{-a} & 1\ep\\
= & \bp \beta_2 p^r & \beta_2 v'_2 p^{r-a} + \beta_1 p^{r-s} & \beta_2 v_3 - \beta_3 v'_2 p^{s-a-r} + \beta_1 v_4 p^{-s} - p^{-r} & \beta_2 v_4 + \beta_3 p^{s-r}\\
\beta_4 p^r & \beta_4 v'_2 p^{r-a} + p^{r-s} & \beta_4 v_3 - \beta_5 v'_2 p^{s-a-r} + v_4 p^{-s} & \beta_4 v_4 + \beta_5 p^{s-r}\\
p^r & v'_2 p^{r-a} & v_3 & v_4\\
-\beta_1 p^r & -\beta_1 v'_2 p^{r-a} & -\beta_1 v_3 - v'_2 p^{s-a-r} & -\beta_1 v_4 + p^{s-r}\ep.
\ea

The entry $-\beta_1 v'_2 p^{r-a}$ being an integer says $\beta_1 = \beta'_1 p^{a-r}$ for some $\beta'_1 \in \Z$. Entries $-\beta_1 v_3 - v'_2 p^{s-a-r}$ and $-\beta_1 v_4 + p^{s-r}$ being integers says
\begin{align}
\beta'_1 v_3 &\equiv -v'_2 p^{s-2a} \pmod{p^{r-a}}, & \beta'_1 v_4 &\equiv p^{s-a} \pmod{p^{r-a}}.
\end{align}
As $\rb{v_3, v_4, p^{r-a}} = 1$, these equations determine $\beta_1$ uniquely modulo 1.

The entry $\beta_4 v'_2 p^{r-a} + p^{r-s}$ being an integer says $\beta_4 \equiv -\ol{v'_2} p^{a-s} \pmod{p^{a-r}}$. Write $\beta_4 = -\ol{v'_2} p^{a-s} + \gamma_4 p^{a-r}$ for some $\gamma_4 \in \Z$. Then $\beta_4 v_3 - \beta_5 v'_2 p^{s-a-r} + v_4 p^{-s}$ being an integer says
\begin{align}
-\ol{v'_2} v_3 p^a + \gamma_4 v_3 p^{s+a-r} - \beta_5 v'_2 p^{2s-a-r} + v_4 \equiv 0 \pmod{p^s}.
\end{align}
Write $\beta_5 = \beta'_5 p^{a+r-2s}$ for some $\beta'_5 \in \Z$. Then we solve
\begin{align}\label{eq:Kl_aba_betap5_sol}
\beta'_5 \equiv -\ol{v'_2}^2 v_3 p^a + \gamma_4 \ol{v'_2} v_3 p^{s+a-r} + \ol{v'_2} v_4 \pmod{p^s}. 
\end{align}
Then $\beta_4 v_4 + \beta_5 p^{s-r}$ being an integer says
\begin{align} \label{eq:Kl_aba_gamma4}
\gamma_4 \rb{p^a v_3 + v'_2 v_4} p^{s+a-r} \equiv v_3 p^{2a} \pmod{p^s}.
\end{align}
Recall that $\rb{p^{r-a}, p^a v_3 + v'_2 v_4} = p^{r+a-s}$. Hence, unless $a = \frac{s}{2}$, we can write $p^a v_3 + v'_2 v_4 = V' p^{r+a-s}$, with $(V',p) = 1$. Then we solve \eqref{eq:Kl_aba_gamma4}:
\ba
\gamma_4 \equiv \ol{V'} v_3 \pmod{p^{s-2a}}. 
\ea
Putting back to \eqref{eq:Kl_aba_betap5_sol} gives
\ba
\beta'_5 \equiv -\ol{v'_2}^2 v_3 p^a + \ol{V' v'_2} v_3^2 p^{s+a-r} + \ol{v'_2} v_4 \pmod{p^{2s-a-r}},
\ea
hence $\beta_5$ is uniquely determined modulo 1. When $a=\frac{s}{2}$, $\gamma_4$ can be arbitrary, and we have
\ba
\beta'_5 \equiv -\ol{v'_2}^2 v_3 p^a + \ol{v'_2} v_4 \pmod{p^{2s-a-r}},
\ea
hence $\beta_5$ is also uniquely determined modulo 1 in this case. 

So, after writing $u$ for $\beta_5 p^s$, the Kloosterman sum is given by
\ba
\Kl_p\rb{n_{s_\alpha s_\beta s_\alpha, r, s}, \psi, \psi'} = &\sum\limits_{s-r \leq a \leq s/2}\sum\limits_{\substack{v_2, v_3, v_4 \ppmod{p^r}\\ v_2 = v'_2 p^{r-a}, \; (v'_2, p^a) = 1\\ (v_3, v_4, p^{r-a}) = 1\\ \rb{p^{r-a}, p^a v_3 + v'_2 v_4} = p^{r+a-s}}}\e\rb{\frac{m_1\hat v_2 + n_1v_2}{p^r}}\e\rb{\frac{m_2 u}{p^s}},
\ea
where $\hat v_2$ is chosen modulo $p^r$ such that
\begin{align}\label{eq:abaKloosterman_v2hat}
\hat v_2 v_3 &\equiv -v'_2 p^{s-a} \pmod{p^r}, & \hat v_2 v_4 \equiv p^s \pmod{p^r},
\end{align}
and 
\begin{align}\label{eq:abaKloosterman_u}
u \equiv \begin{cases} -\ol{v'_2}^2 v_3 p^{2a+r-s} + \ol{V' v'_2} v_3^2 p^{2a} + \ol{v'_2} v_4 p^{a+r-s} \pmod{p^s} & \text{if } a<\frac{s}{2},\\
-\ol{v'_2}^2 v_3 p^{2a+r-s} + \ol{v'_2} v_4p^{a+r-s} \pmod{p^s} & \text{if } a=\frac{s}{2},\end{cases}
\end{align}
where $V' = p^{s-r-a} \rb{p^a v_3 + v'_2 v_4}$.

\item $n = n_{s_\beta s_\alpha s_\beta,r,s}$. Then $X(n_{s_\beta s_\alpha s_\beta,r,s})$ is nonempty when $s\geq r$. Unfolding the conditions \eqref{eq:Pl_rel} and \eqref{eq:Pl_cp}, we compute
\ba
X(n_{s_\beta s_\alpha s_\beta, r,s}) = \cb{\rb{0,p^r, v_{13} p^{r-s}, v_{14} p^{r-s}; p^s, v_{13}, v_{14}, v_{23}, -v_{13}, -(v_{13}^2+v_{14}v_{23}) p^{-s}}},
\ea
where $v_{13}, v_{14}, v_{23} \pmod{p^s}$, such that $(v_{13}, v_{14}, p^s) = p^{s-r}$, $(v_{14}, p^s) \mid v_{13}^2$, and $(p^{s-r}, v_{23}, v_{34}) = 1$. Recall that $v_{34} = -(v_{13}^2+v_{14}v_{23})p^{-s}$. We write $v_{13} = v'_{13} p^{s-r}$, $v_{14} = v'_{14} p^{s-r}$, so $(v'_{13}, v'_{14}, p^r) = 1$. Bruhat decomposition gives
\ba
x = &\bp 1 & \beta_1 & \beta_2 & \beta_3\\ & 1 & \beta_4 & \beta_5\\ &&1\\&&-\beta_1 & 1\ep \bp &&& -p^{-r}\\ &&p^{r-s}\\ &p^r\\ -p^{s-r}\ep \bp 1 && -v_{23} p^{-s} & v'_{13} p^{-r}\\ &1& v'_{13} p^{-r} & v'_{14} p^{-r}\\ &&1\\ &&&1\ep\\
= &\bp -\beta_3 p^{s-r} & \beta_2 p^r & \beta_2 v'_{13} + \beta_1 p^{r-s} + \beta_3 v_{23} p^{-r} & \beta_2 v'_{14} - \beta_3 v'_{13} p^{s-2r} - p^{-r}\\
-\beta_5 p^{s-r} & \beta_4 p^r & \beta_4 v'_{13} + \beta_5 v_{23} p^{-r} + p^{r-s} & \beta_4 v'_{14} - \beta_5 v'_{13} p^{s-2r}\\
0 & p^r & v'_{13} & v'_{14}\\
-p^{s-r} & -\beta_1 p^r & -\beta_1 v'_{13} + v_{23} p^{-r} & -\beta_1 v'_{14} - v'_{13} p^{s-2r}\ep.
\ea

The entry $-\beta_1 p^r$ being an integer says $\beta_1 = \beta'_1 p^{-r}$ for $\beta'_1 \in \Z$. Entries $-\beta_1 v'_{13} + v_{23} p^{-r}$ and $-\beta_1 v'_{14} - v'_{13} p^{s-2r}$ being integers says
\begin{align}
\beta'_1 v'_{13} &\equiv v_{23} \pmod{p^r}, & \beta'_1 v'_{14} &\equiv - v'_{13} p^{s-r} \pmod{p^r}.
\end{align}
As $(v'_{13}, v'_{14},p^r)=1$, this determines $\beta_1$ uniquely modulo 1.

Entries $\beta_4 p^r$ and $-\beta_5 p^{s-r}$ being integers says $\beta_4 = \beta'_4 p^{-r}$ and $\beta_5 = \beta'_5 p^{r-s}$ for some $\beta'_4, \beta'_5 \in \Z$. The entry $\beta_4 v'_{13} + \beta_5 v_{23} p^{-r} + p^{r-s}$ being an integer says
\begin{align}\label{eq:Kl_bab_1}
\beta'_4 v'_{13} p^{s-r} + \beta'_5 v_{23} + p^r \equiv 0 \pmod{p^s},
\end{align}
which implies
\begin{align}\label{eq:Kl_bab_sol_1}
\beta'_5 v_{23} \equiv -p^r \pmod{p^{s-r}}.
\end{align}
The entry $\beta_4 v'_{14} - \beta_5 v'_{13} p^{s-2r}$ being an integer says
\begin{align}\label{eq:Kl_bab_2}
\beta'_4 v'_{14} p^{s-r} - \beta'_5  v'_{13} p^{s-r} \equiv 0 \pmod{p^s}. 
\end{align}
Then, $v'_{13}$ times \eqref{eq:Kl_bab_2} minus $v'_{14}$ times \eqref{eq:Kl_bab_1} gives
\begin{align}
\beta'_5 (- {v'_{13}}^2 p^{s-r} - v'_{14} v_{23}) &\equiv p^r v'_{14} \pmod{p^s}\nonumber\\
\beta'_5 p^r v_{34} &\equiv p^r v'_{14} \pmod{p^s}\nonumber\\
\beta'_5 v_{34} & \equiv v'_{14} \pmod{p^{s-r}}. \label{eq:Kl_bab_sol_2}
\end{align}
As $\rb{p^{s-r}, v_{23}, v_{34}} = 1$, \eqref{eq:Kl_bab_sol_1} and \eqref{eq:Kl_bab_sol_2} determine $\beta_5$ uniquely modulo 1.

So, after writing $u$ for $\beta_1 p^r$ and $\hat v_{14}$ for $\beta_5 p^s$, the Kloosterman sum is given by
\ba
\Kl_p\rb{n_{s_\beta s_\alpha s_\beta, r, s}, \psi, \psi'} =  \sum\limits_{\substack{v_{13}, v_{14}, v_{23} \ppmod{p^s}\\ \rb{p^s, v_{13}, v_{14}} = p^{s-r}\\ \rb{p^s, v_{14}} \mid v_{13}^2\\ \rb{p^{s-r}, v_{23}, v_{34}} = 1}}\e\rb{\frac{m_1 u}{p^r}}\e\rb{\frac{m_2 \hat v_{14} + n_2 v_{14}}{p^s}},
\ea
where $u$ is chosen modulo $p^r$ such that 
\begin{align}\label{eq:babKloosterman_u}
u v_{13} p^{r-s} &\equiv v_{23} \pmod{p^r}, & u v_{14} p^{r-s} &\equiv - v_{13} \pmod{p^r},
\end{align}
and $\hat v_{14}$ is chosen modulo $p^s$ such that
\begin{align}\label{eq:babKloosterman_v14hat}
\hat v_{14} v_{23} &\equiv -p^{2r} \pmod{p^s}, & \hat v_{14} v_{34} & \equiv v_{14} p^{2r-s} \pmod{p^s}.
\end{align}

\item $n=n_{w_0,r,s}$. Then $X(n_{w_0,r,s})$ is nonempty whenever $r,s\geq 0$. Unfolding the conditions \eqref{eq:Pl_rel} and \eqref{eq:Pl_cp}, we compute
\ba
X(n_{w_0,r,s}) = \cb{\rb{p^r,v_2,v_3,v_4;p^s,v_{13},v_{14},(v_2v_{13}-v_3p^s)p^{-r},-v_{13},(v_3v_{14}-v_4v_{13})p^{-r}}}, 
\ea
where $v_2,v_3,v_4\pmod{p^r}$, $v_{13},v_{14}\pmod{p^s}$, such that $v_{13}p^r+v_2v_{14}-v_4p^s = 0$, $(v_2,v_3,v_4,p^r)=1$, and $(v_{13},v_{14},v_{23},v_{34},p^s)=1$. Recall that
\ba
v_{23} = (v_2v_{13}-v_3p^s)p^{-r}, & \quad v_{34} = (v_3v_{14}-v_4v_{13})p^{-r}.
\ea
Bruhat decomposition gives
\ba
x = &\bp 1 & \beta_1 & \beta_2 & \beta_3\\ & 1 & \beta_4 & \beta_5\\ &&1\\&&-\beta_1 & 1\ep \bp && -p^{-r}\\ &&&-p^{r-s}\\ p^r\\ &p^{s-r}\ep \bp 1 & v_2 p^{-r} & v_3 p^{-r} & v_4 p^{-r}\\ & 1 & v_{13} p^{-s} & v_{14} p^{-s}\\ &&1\\ &&-v_2 p^{-r} & 1\ep\\
= &\scalebox{0.9}{$\displaystyle
\bp \beta_2 p^r & \beta_2 v_2 + \beta_3 p^{s-r} & \beta_2 v_3 + \beta_3 v_{13} p^{-r} + \beta_1 v_2 p^{-s} - p^{-r} & \beta_2 v_4 - \beta_1 p^{r-s} + \beta_3 v_{14} p^{-r}\\
\beta_4 p^r & \beta_4 v_2 + \beta_5 p^{s-r} & \beta_4 v_3 + \beta_5 v_{13} p^{-r} + v_2 p^{-s} & \beta_4 v_4 + \beta_5 v_{14} p^{-r} - p^{r-s}\\
p^r & v_2 & v_3 & v_4\\
-\beta_1 p^r & -\beta_1 v_2 + p^{s-r} & -\beta_1 v_3 + v_{13} p^{-r} & -\beta_1 v_4 + v_{14} p^{-r}\ep.$}
\ea

The entry $-\beta_1 p^r$ being an integer says $\beta_1 = \beta'_1 p^{-r}$ for some $\beta'_1\in\Z$. The last row of $\gamma$ being integral gives
\begin{align}
\beta'_1 v_2 &\equiv p^s \pmod{p^r}, & \beta'_1 v_3 &\equiv v_{13} \pmod{p^r}, & \beta'_1 v_4 &\equiv v_{14} \pmod{p^r}. 
\end{align}
As $\rb{p^r, v_2, v_3, v_4} = 1$, these equations determine $\beta_1$ uniquely modulo 1.

The entry $\beta_4 p^r$ being an integer says $\beta_4 = \beta'_4 p^{-r}$ for some $\beta'_4 \in \Z$. Then $\beta_4 v_2 + \beta_5 p^{s-r}$ being an integer says 
\begin{align}\label{eq:Kl_abab_v12}
\beta'_4 v_2 + \beta_5 p^s \equiv 0 \pmod{p^r}. 
\end{align}
In particular, this means $\beta_5 = \beta'_5 p^{-s}$ for some $\beta'_5 \in \Z$. The entries $\beta_4 v_3 + \beta_5 v_{13} p^{-r} + v_2 p^{-s}$ and $\beta_4 v_4 + \beta_5 v_{14} p^{-r} - p^{r-s}$ being integers says
\begin{align}
\beta'_4 v_3 p^s + \beta'_5 v_{13} + v_2 p^r &\equiv 0 \pmod{p^{r+s}}, \label{eq:Kl_abab_v13}\\
\beta'_4 v_4 p^s + \beta'_5 v_{14} - p^{2r} &\equiv 0 \pmod{p^{r+s}} \label{eq:Kl_abab_v14}.
\end{align}
In particular we deduce 
\begin{align}
\beta'_5 v_{13} + v_2 p^r &\equiv 0 \pmod{p^s},\\
\beta'_5 v_{14} - p^{2r} &\equiv 0 \pmod{p^s}.
\end{align}
Then, $v_2$ times \eqref{eq:Kl_abab_v13} minus $v_3 p^s$ times \eqref{eq:Kl_abab_v12} gives
\begin{align*}
\beta'_5 \rb{v_2v_{13} - v_3p^s} + v_2^2 p^r \equiv 0 \pmod{p^{r+s}},
\end{align*}
which implies
\begin{align}
\beta'_5 v_{23} + v_2^2 \equiv 0 \pmod{p^s}.
\end{align}
Similarly, $v_3$ times \eqref{eq:Kl_abab_v13} minus $v_4$ times \eqref{eq:Kl_abab_v14} gives
\begin{align*}
&\beta'_5 \rb{v_3v_{14}-v_4v_{13}} - p^r\rb{v_3 p^r + v_2v_4} \equiv 0 \pmod{p^{r+s}},
\end{align*}
which implies
\begin{align}
\beta'_5 v_{34} \equiv \rb{v_3 p^r + v_2 v_4}\pmod{p^s}. 
\end{align}
In summary, $\beta'_5$ satisfies the following equations:
\ba
\beta'_5 v_{13} &\equiv - v_2 p^r  \pmod{p^s}, & \beta'_5 v_{14} &\equiv p^{2r} \pmod{p^s},\\
\beta'_5 v_{23} &\equiv -v_2^2 \pmod{p^s}, & \beta'_5 v_{34} &\equiv v_3 p^r + v_2v_4 \pmod{p^s}.
\ea
As $\rb{p^s, v_{13}, v_{14}, v_{23}, v_{34}} = 1$, these equations determine $\beta_5$ uniquely modulo 1.

So, after writing $\hat v_2$ for $\beta_1 p^r$ and $\hat v_{14}$ for $\beta_2 p^s$, the Kloosterman sum is given by
\ba
\Kl_p \rb{n_{w_0, r, s}, \psi, \psi'} = \sum\limits_{\substack{v_2, v_3, v_4\ppmod{p^r}\\ v_{13}, v_{14}\ppmod{p^s}\\ v_{13}p^r + v_2v_{14} - v_4p^s = 0\\ (p^r, v_2, v_3, v_4) = 1\\ (p^s, v_{13}, v_{14}, v_{23}, v_{34}) = 1}}\e\rb{\frac{m_1\hat v_2 + n_1 v_2}{p^r}}\e\rb{\frac{m_2\hat v_{14} + n_2 v_{14}}{p^s}},
\ea
where $\hat v_2$ is chosen modulo $p^r$ such that
\begin{align}\label{v2hatcong}
\hat v_2 v_2 \equiv p^s \pmod{p^r}, \quad \hat v_2 v_3 \equiv v_{13} \pmod{p^r}, \quad \hat v_2 v_4 \equiv v_{14} \pmod{p^r};
\end{align}
and $\hat v_{14}$ chosen modulo $p^s$ such that
\begin{align}
\begin{aligned}
\hat v_{14}v_{13} &\equiv -v_2p^r \pmod{p^s}, & \hat v_{14}v_{14} &\equiv p^{2r} \pmod{p^s},\\
\hat v_{14}v_{23} &\equiv -v_2^2 \pmod{p^s}, & \hat v_{14} v_{34} &\equiv v_3p^r+v_2v_4\pmod{p^s}.
\end{aligned}
\end{align}
\ee

Now we give a few reduction formulae for Kloosterman sums, which are straightforward to prove.
\begin{prp}
Let $\psi = \psi_{m_1, m_2}$, $\psi' = \psi_{n_1, n_2}$. Then 
\ba
\Kl_p\rb{n_{w_0, r, 0}, \psi, \psi'} &= S\rb{m_1, n_1; p^r}, & \Kl_p\rb{n_{w_0, 0, s}, \psi, \psi'} &= S\rb{m_2, n_2; p^s},\\
\Kl_p\rb{n_{s_\alpha s_\beta s_\alpha, r, 0}, \psi, \psi'} &= S\rb{m_1,0;p^r}, & \Kl_p\rb{n_{s_\beta s_\alpha s_\beta, 0, s}, \psi, \psi'} &= S\rb{0,m_2;p^s},\\
\Kl_p\rb{n_{s_\alpha s_\beta, r, 0}, \psi, \psi'} &= S\rb{m_1,0;p^r}, & \Kl_p\rb{n_{s_\beta s_\alpha, 0, s}, \psi, \psi'} &= S\rb{0,m_2;p^s}.
\ea
\end{prp}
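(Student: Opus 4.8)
The plan is to obtain each of the six identities by specialising the explicit Pl\"ucker-coordinate formulae for $\Kl_p\rb{n_{w,r,s},\psi,\psi'}$ recorded in \Cref{section:Sp4Kloosterman} to the boundary value $s=0$ (for the first column of identities) or $r=0$ (for the second column), and then recognising the resulting one-parameter sum. The mechanism is uniform: when $s=0$, every exponential factor in the relevant formula whose denominator is a power $p^{s-\ast}$ becomes $\e$ of an integer and hence equals $1$; every Pl\"ucker coordinate that is summed modulo $p^{0}$, or whose defining congruence is taken modulo $p^{0}$, is forced to be $0$; and the remaining system of intertwined congruences degenerates so as to determine all but one of the surviving coordinates, leaving a single free unit. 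Mutatis mutandis for $r=0$. By \Cref{prp:Stevens3.2} it suffices to treat the normalised representatives $n_{w,r,s}$, which is exactly the form in which the formulae are stated.

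Consider first the two statements with Weyl element $w_{0}$. In $\Kl_p\rb{n_{w_0,r,0},\psi,\psi'}$ the factor $\e\rb{(m_2\hat v_{14}+n_2v_{14})/p^{0}}$ equals $1$, the coordinates $v_{13},v_{14}$ vanish, the relation $v_{13}p^{r}+v_2v_{14}-v_4p^{0}=0$ forces $v_4=0$, and \eqref{v2hatcong} degenerates to $\hat v_2v_2\equiv 1$, $\hat v_2v_3\equiv 0\pmod{p^{r}}$, so that $v_2\in\rb{\Z/p^{r}\Z}^{\times}$, $\hat v_2=\ol{v_2}$ and $v_3=0$; the sum collapses to $\sum_{(v_2,p)=1}\e\rb{(m_1\ol{v_2}+n_1v_2)/p^{r}}=S\rb{m_1,n_1;p^{r}}$. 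Symmetrically, in $\Kl_p\rb{n_{w_0,0,s},\psi,\psi'}$ the first exponential is $1$, the coordinates $v_2,v_3,v_4$ vanish, and the congruences defining $\hat v_{14}$ force $v_{13}=v_{23}=v_{34}=0$ together with $v_{14}\in\rb{\Z/p^{s}\Z}^{\times}$ and $\hat v_{14}=\ol{v_{14}}$, whence the sum equals $S\rb{m_2,n_2;p^{s}}$.

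For the four Ramanujan-sum identities the character pairing with the $\beta$-direction (respectively $\alpha$-direction) drops out entirely. In $\Kl_p\rb{n_{s_\alpha s_\beta,r,0},\psi,\psi'}$, at $s=0$ the second exponential is $1$, $v_4$ is trivial, the condition $(v_3,p^{r-s})=1$ becomes $(v_3,p^{r})=1$, and the substitution $v_3\mapsto\ol{v_3}$ gives $\sum_{(v_3,p)=1}\e\rb{m_1\ol{v_3}/p^{r}}=c_{p^{r}}\rb{m_1}$. The case $\Kl_p\rb{n_{s_\alpha s_\beta s_\alpha,r,0},\psi,\psi'}$ is the same after one extra observation: at $s=0$ the range $s-r\le a\le s/2$ of the stratification index collapses to $a=0$, which trivialises the inner block ($v_2\equiv 0$, $v_3\equiv -v_4$, $v_4$ a unit, and $\hat v_2=\ol{v_4}$ by \eqref{eq:abaKloosterman_v2hat}), so the sum is $\sum_{(v_4,p)=1}\e\rb{m_1\ol{v_4}/p^{r}}=c_{p^{r}}\rb{m_1}$. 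The two mirror identities $\Kl_p\rb{n_{s_\beta s_\alpha,0,s},\psi,\psi'}=c_{p^{s}}\rb{m_2}$ and $\Kl_p\rb{n_{s_\beta s_\alpha s_\beta,0,s},\psi,\psi'}=c_{p^{s}}\rb{m_2}$ follow the same way with $r=0$; a stray sign $-m_2$ may appear in the exponent, but it is absorbed by $c_{q}(-m)=c_{q}(m)$.

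None of this involves a genuine obstacle --- the proposition is pure bookkeeping, as the statement advertises. The points that need the most care, and where I would expect a sign slip or off-by-one, are: verifying that the auxiliary coordinates $v_{23},v_{34}$ in the $w_{0}$ and $s_\beta s_\alpha s_\beta$ formulae are genuinely forced to $0$ once the modulus $p^{0}$ is imposed; and re-reading the support conditions \eqref{eq:abaKloosterman_v2hat}--\eqref{eq:abaKloosterman_u} and their $s_\beta s_\alpha s_\beta$ analogues \eqref{eq:babKloosterman_u}--\eqref{eq:babKloosterman_v14hat} in the boundary regime to confirm that the stratification parameter is pinned down and that no spurious terms survive.
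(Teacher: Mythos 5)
Your proposal is correct, and it is exactly the verification the paper has in mind: the paper offers no proof of this proposition, remarking only that the reduction formulae are ``straightforward to prove,'' and specialising the explicit Pl\"ucker-coordinate formulae of \Cref{section:Sp4Kloosterman} to $s=0$ or $r=0$ is the intended bookkeeping. Each of your six case checks is sound, including the two points that genuinely need care: in the $w_0$, $s=0$ case the congruences \eqref{v2hatcong} force $v_3\equiv 0$ and $v_4\equiv 0$ so that only the $v_2$-sum survives, and in the $s_\beta s_\alpha s_\beta$, $r=0$ case the relation $\hat v_{14}v_{23}\equiv -1\pmod{p^s}$ produces the sign that you correctly absorb via $c_q(-m)=c_q(m)$.
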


We end the section by proving that the Kloosterman sum attached to the long Weyl element $w_0$ is symmetric with respect to characters $\psi, \psi'$. Note that this holds for $G = \Sp(2r)$ in general. 

\begin{prp}\label{prp:w0Kloosterman_swap}
Let $G = \Sp(2r,\Q_p)$, and $n\in N\rb{\Q_p}$, such that $w(n) = w_0$ is the long Weyl element. Let $\psi, \psi': U\rb{\Q_p} / U\rb{\Z_p} \to \C^\times$ be characters. Then
\ba
\Kl_p\rb{n, \psi, \psi'} = \Kl_p\rb{n, \psi', \psi}.
\ea
\end{prp}
\begin{proof}
The definition of Kloosterman sums reads
\ba
\Kl_p\rb{n, \psi, \psi'} = \sum\limits_{x\in X(n)} \psi\rb{u(x)} \psi'\rb{u'(x)}.
\ea
The key idea of the proof is to find a bijection $X(n) \to X(n)$, $x\mapsto \tilde x$ such that $\psi(u(\tilde x)) = \psi(u'(x))$ and $\psi'(u'(\tilde x)) = \psi'(u(x))$. Since $w(n) = w_0$ is the long Weyl element, $n\in N(\Q_p)$ is of the form
\ba
n &= \bp & -D^{-1}\\ D \ep, & D&=\diag(d_1,\cdots, d_r), & &d_i\in\Q_p^\times.
\ea
Let $x\in X(n)$, and suppose
\ba
u(x) &= \bp U & S\\ & (U^{-1})^T\ep \in U(\Q_p), & u'(x) &= \bp U' & S'\\ & ({U'}^{-1})^T\ep \in U(\Q_p). 
\ea
Then we have
\ba
x = \bp SDU' & SDS' - UD^{-1}({U'}^{-1})^T\\ (U^{-1})^T D U' & (U^{-1})^T D S'\ep \in G(\Z_p).
\ea
Now set
\ba
\tilde u &= \bp ({\tilde U'}{}^{-1})^T & \tilde S'\\ & \tilde U'\ep, & \tilde u' &= \bp ({\tilde U}^{-1})^T & \tilde S\\ & \tilde U\ep,
\ea
where
\ba
\tilde U_{ij} &:= (-1)^{i-j} U_{ji}, & \tilde S_{ij} &:= (-1)^{i-j} S_{ji}, & \tilde U'_{ij} &:= (-1)^{i-j} U'_{ji}, & \tilde S'_{ij} &:= (-1)^{i-j} S'_{ji}.
\ea
It is straightforward to verify that $\tilde u, \tilde u' \in U(\Q_p)$. Now set
\ba
\tilde x = \tilde u n \tilde u' = \bp \tilde S' D ({\tilde U}^{-1})^T & \tilde S' D \tilde S - ({\tilde U'}{}^{-1})^T D ^{-1} \tilde U\\ \tilde U' D ({\tilde U}^{-1})^T & \tilde U' D \tilde S\ep \in G(\Q_p).
\ea
Now observe
\ba
\rb{\tilde S' D ({\tilde U}^{-1})^T}_{ij} = \sum\limits_k \tilde S'_{ik} d_k ({\tilde U}^{-1})^T_{kj} = \sum\limits_k (-1)^{i+j} (U^{-1})^T_{jk} d_k S'_{ki} = (-1)^{i+j} \rb{(U^{-1})^T D S'}_{ji} \in \Z_p,
\ea
and similarly
\ba
\rb{\tilde S' D \tilde S - ({\tilde U'}{}^{-1})^T D ^{-1} \tilde U}_{ij} &= (-1)^{i+j} \rb{SDS' - UD^{-1}({U'}^{-1})^T}_{ji} \in \Z_p,\\
\rb{\tilde U' D ({\tilde U}^{-1})^T}_{ij} &= (-1)^{i+j} \rb{(U^{-1})^T D U'}_{ji} \in \Z_p. 
\ea
Hence $\tilde x \in G(\Z_p)$. Moreover, we may directly verify that $\alpha_i(\tilde u) = \alpha_i(u')$, $\alpha_i(\tilde u') = \alpha_i(u)$ for $1\leq i\leq r$. So $\psi(u(\tilde x)) = \psi(u'(x))$ and $\psi'(u'(\tilde x)) = \psi'(u(x))$. Finally, using the bijection $X(n) \to X(n)$, $x\mapsto \tilde x$, we deduce that
\begin{align*}
\Kl_p\rb{n, \psi, \psi'} &= \sum\limits_{x\in X(n)} \psi\rb{u(x)} \psi'\rb{u'(x)}\\
&= \sum\limits_{x\in X(n)} \psi\rb{u(\tilde x)} \psi'\rb{u'(\tilde x)}\\
&= \sum\limits_{x\in X(n)} \psi'\rb{u(x)} \psi\rb{u'(x)} = \Kl_p\rb{n, \psi', \psi}.\qedhere
\end{align*}
\end{proof}

\section{Bounds for $\Sp(4)$ Kloosterman sums} \label{section:Sp4Kloosterman_bound}

Fix $\psi = \psi_{m_1,m_2}$, $\psi' = \psi_{n_1,n_2}$ as in \eqref{eq:psi4_def}. We first establish non-trivial bounds for local Kloosterman sums $\Kl_p\rb{n_{w,r,s}, \psi, \psi'}$, that is, prove \Cref{thm:local_bound}.

We start with the local bounds. For $\Kl_p(n_{\id,0,0},\psi,\psi')$, there is nothing to prove. Meanwhile, $\Kl_p\rb{n_{s_\alpha,r,0}, \psi, \psi'}$ and $\Kl_p\rb{n_{s_\beta,0,s}, \psi, \psi'}$ are just $\GL(2)$ Kloosterman sums. A well-known bound for $\GL(2)$ Kloosterman sums is given by \cite{Smith1980}
\begin{align}\label{eq:Kloosterman_GL2bound}
\vb{S(\mu, \nu; p^k)} \leq 2 p^{k/2} (\vb{\mu}_p^{-1}, \vb{\nu}_p^{-1}, p^k)^{1/2}.
\end{align}
So
\ba
\vb{\Kl_p\rb{n_{s_\alpha, r}, \psi, \psi'}} = S(m_1,n_1,p^r) &\ll p^{r/2}(m_1,n_1,p^r)^{1/2}, \\
\vb{\Kl_p\rb{n_{s_\beta, s}, \psi, \psi'}} = S(m_2,n_2,p^s) &\ll p^{s/2} (m_2,n_2,p^s)^{1/2}
\ea
as claimed.

\subsection{Bound for $\Kl_p\rb{n_{s_\alpha s_\beta, r, s}, \psi, \psi'}$} 
We recall
\ba
\Kl_p \rb{n_{s_\alpha s_\beta, r, s}, \psi, \psi'} = \sum\limits_{\substack{v_4 \ppmod{p^s}\\ (v_4, p^s) = 1}} \sum\limits_{\substack{v_3 \ppmod{p^r}\\ (v_3, p^{r-s}) = 1}}\e\rb{\frac{m_1\ol{v_3}p^s}{p^r}}\e\rb{\frac{m_2 \ol{v_4} v_3^2 + n_2 v_4}{p^s}}.
\ea

Without loss of generality, we assume $\ord_p(m_1) \leq r-s$, and $\ord_p(m_2), \ord_p(n_2) \leq s$. Observe that
\ba
\Kl_p\rb{n_{s_\alpha s_\beta, r,s}, \psi_{m_1, m_2}, \psi_{n_1, n_2}} = p^{k+2l} \Kl_p\rb{n_{s_\alpha s_\beta, r-k-l, s-l} \psi_{m_1 p^{-k}, m_2 p^{-l}}, \psi_{n_1, n_2 p^{-l}}}
\ea
whenever $p^k \mid \rb{m_1, p^{r-s}}$ and $p^l \mid \rb{m_2, n_2, p^s}$. So we may assume $s=0$, $r=s$, or $p\nmid m_1\rb{m_2, n_2}$. 

If $s=0$, then 
\ba
\vb{\Kl\rb{n_{s_\alpha s_\beta, r,0}, \psi, \psi'}} = \bigg|\sum\limits_{\substack{v_3\ppmod{p^r}\\ (v_3, p^r) = 1}}\e\rb{\frac{m_1\ol{v_3}}{p^r}}\bigg| \leq p^{\ord_p(m_1)}. 
\ea

If $r=s$, then
\ba
\vb{\Kl\rb{n_{s_\alpha s_\beta, r,0}, \psi, \psi'}} = \bigg|\sum\limits_{\substack{v_4\ppmod{p^r}\\ (v_4, p) = 1}} \sum\limits_{v_3\ppmod{p^r}}\e\rb{\frac{m_2\ol{v_4}v_3^2 + n_2v_4}{p^r}}\bigg| \leq p^{r+\frac{\ord_p(m_2)}{2} + \frac{\ord_p(n_2)}{2}}
\ea
is just a summation of quadratic Gauss sums, and is easily evaluated. 

Now suppose $p \nmid m_1 \rb{m_2, n_2}$. If $p \mid m_2$ and $s>1$, then
\ba
\Kl\rb{n_{s_\alpha s_\beta, r,s}, \psi, \psi'} = &\sum\limits_{\substack{v_4\ppmod{p^{s-1}}\\ (v_4, p) = 1}} \sum\limits_{\substack{v_3\ppmod{p^r}\\ (v_3, p^{r-s}) = 1}} \sum\limits_{k=0}^{p-1}\e\rb{\frac{m_1\ol{v_3}}{p^{r-s}}}\e\rb{\frac{m_2 \ol{v_4} v_3^2 + n_2 \rb{v_4+ k p^{s-1}}}{p^s}}\\
= &p \sum\limits_{k=0}^{p-1}\e\rb{\frac{n_2k}{p}} \Kl\rb{n_{s_\alpha s_\beta, r-1, s-1}, \psi_{m_1, m_2/p}, \psi'} = 0.
\ea
Now suppose $p \mid m_2$ and $s=1$. We may also assume $r\geq 2$. Then
\ba
\Kl\rb{n_{s_\alpha s_\beta, r,1}, \psi, \psi'} = &\sum\limits_{\substack{v_4\ppmod{p}\\ (v_4,p)=1}} \sum\limits_{\substack{v_3\ppmod{p^r}\\ (v_3,p) = 1}}\e\rb{\frac{m_1\ol{v_3}}{p^{r-1}}}\e\rb{\frac{n_2v_4}{p}} = \begin{cases} p & \text{if } r=2,\\ 0 & \text{if } r>2.\end{cases} 
\ea
The same argument shows that the bound also holds when $p\mid n_2$. Now assume $p\nmid m_1m_2n_2$. When $p$ is odd, the same argument shows that the sum is zero unless $r=2s$; when $p=2$, the sum is zero unless $r=2s$ or $r=2s-1$.

We first consider the case $r=2s$. When $s=1$, we have
\ba
\Kl_p\rb{n_{s_\alpha s_\beta, 2,1}, \psi, \psi'} = \sum\limits_{\substack{v_4\ppmod{p}\\ (v_4, p) = 1}} \sum\limits_{\substack{v_3\ppmod{p^2}\\ (v_3, p) = 1}}\e\rb{\frac{m_1\ol{v_3} + m_2 \ol{v_4} v_3^2 + n_2 v_4}{p}}.
\ea

We apply a theorem of Adolphson and Sperber on exponential sums of Laurent polynomials. Let $k = \F_q$ be a finite field of characteristic $p$. Let 
\ba
f = \sum\limits_{j\in J} a_j x^j \in k[x_1,\cdots, x_n, (x_1 \cdots x_n)^{-1}]
\ea 
be a Laurent polynomial in $n$ variables. We assume that $a_j \neq 0$ for all $j\in J$. Let $\Psi$ be a nontrivial additive character of $k$, we set
\ba
S^*(f) = \sum\limits_{x\in (k^\times)^n} \Psi(f(x)). 
\ea

The Newton polyhedron of $f$, denoted by $\Delta(f)$, is the convex hull in $\R^n$ of the set $J \cup \cb{(0,\cdots, 0)}$. We denote by $V(f)$ the volume of $\Delta(f)$ with respect to the Lebesgue measure on $\R^n$. For a face $\sigma$ (of any dimension) of $\Delta(f)$, we set
\ba
f_\sigma = \sum\limits_{j\in \sigma \cap J} a_j x^j.
\ea
We say that $f$ is non-degenerate with respect to $\Delta(f)$ if for every face $\sigma$ of $\Delta(f)$ that does not contain the origin, the polynomials
\ba
\pd{f_\sigma}{x_1}, \cdots, \pd{f_\sigma}{x_n}
\ea
have no common zeroes in $(\ol k^\times)^n$, where $\ol k$ denotes an algebraic closure of $k$. Then we have the following estimate.
\begin{thm}\label{thm:AS}
\cite[Corollary 4.3]{AS1989} Given an $n$-dimensional integral polyhedron $\Delta$ in $\R^n$, there is a set $\ms S_\Delta$ consisting of all but finitely many prime numbers, such that if $\Char(k) \in \ms S_\Delta$, and
\ba
f \in k[x_1,\cdots, x_n, (x_1\cdots x_n)^{-1}]
\ea
is a non-degenerate Laurent polynomial with $\Delta(f) = \Delta$, then $\vb{S^*(f)} \leq n! V(f) q^{n/2}$. Moreover, when $n=2$, the restriction on $\Char(k)$ can be removed.
\end{thm}

Now set $f(x,y) = \frac{m_1}{x}+\frac{m_2x^2}{y}+n_2y \in \F_p[x,y,(xy)^{-1}]$, and $\Psi:\F_p\to\C^\times$ the standard additive character on $\F_p$. Then $p S^*(f) = \Kl_p(n_{s_\alpha s_\beta,2,1},\psi,\psi')$. We claim that $f$ is non-degenerate whenever $p\neq 2$. The Newton polyhedron $\Delta(f)$ is the triangle with vertices $(x,y) = (-1,0), (2,-1), (0,1)$, and we evaluate $V(f) = 2$. We list the faces $\sigma$ that do not contain the origin, and compute the derivatives $\pd{f_\sigma}{x}, \pd{f_\sigma}{y}$. We denote by $\pb{a_0, \cdots, a_j}$ the $j$-dimensional face of $\Delta(f)$ containing $a_0,\cdots, a_j$. We compute:
\ba
\sigma_1 &= \pb{(-1,0),(2,-1)}, & f_{\sigma_1} &= \frac{m_1}{x}+\frac{m_2x^2}{y}, & \rb{\pd{f_{\sigma_1}}{x}, \pd{f_{\sigma_1}}{y}} &= \rb{-\frac{m_1}{x^2}+\frac{2m_2x}{y}, -\frac{m_2x^2}{y^2}};\\
\sigma_2 &= \pb{(-1,0),(0,1)}, & f_{\sigma_2} &= \frac{m_1}{x}+n_2y, & \rb{\pd{f_{\sigma_2}}{x}, \pd{f_{\sigma_2}}{y}} &= \rb{-\frac{m_1}{x^2}, n_2};\\
\sigma_3 &= \pb{(2,-1),(0,1)}, & f_{\sigma_3} &= \frac{m_2x^2}{y}+n_2y, & \rb{\pd{f_{\sigma_3}}{x}, \pd{f_{\sigma_3}}{y}} &= \rb{\frac{2m_2x}{y}, -\frac{m_2x^2}{y^2}+n_2};\\
\sigma_4 &= \pb{(-1,0)}, & f_{\sigma_4} &= \frac{m_1}{x}, & \rb{\pd{f_{\sigma_4}}{x}, \pd{f_{\sigma_4}}{y}} &=\rb{-\frac{m_1}{x^2},0};\\
\sigma_5 &= \pb{(2,-1)}, & f_{\sigma_5} &= \frac{m_2x^2}{y}, & \rb{\pd{f_{\sigma_5}}{x}, \pd{f_{\sigma_5}}{y}} &=\rb{\frac{2m_2x}{y},-\frac{m_2x^2}{y^2}};\\
\sigma_6 &= \pb{(0,1)}, & f_{\sigma_6} &= n_2y, & \rb{\pd{f_{\sigma_6}}{x}, \pd{f_{\sigma_6}}{y}} &=\rb{0,n_2}.
\ea
Observe that when $p\neq 2$, the terms $-\frac{m_1}{x^2}$, $\frac{2m_2x}{y}$, $-\frac{m_2x^2}{y^2}$, $n_2$ have no zeroes in $(\ol{\F_p}^\times)^2$. Hence we conclude that $f$ is non-degenerate when $p\neq 2$. Now we apply \Cref{thm:AS} and conclude that
\begin{align}\label{eq:ab_S4p}
\vb{S^*(f)} \leq 2V(f)p = 4p. 
\end{align}
for $p\neq 2$. However, by direct computation, the bound \eqref{eq:ab_S4p} also holds for $p=2$. Therefore, for all primes $p$, we have
\ba
\vb{\Kl_p(n_{s_\alpha s_\beta,2,1},\psi,\psi')}\leq 4p^2.
\ea
So the bound holds in this case.

If $s>1$, we apply the stationary phase method, following \cite{DF1997}. Let $V$ be a smooth scheme of dimension $n$, and $f:V\to \A^1 = \A_{\Z_p}^1$ a $\Z_p$-morphism. We consider the exponential sum
\begin{align}\label{eq:stat_phase_def}
S_m(f) := \sum\limits_{x\in V(\Z/p^m\Z)}\e\rb{\frac{f(x)}{p^m}}.
\end{align}
Let $j\leq m$ be a positive integer. We write
\begin{align}\label{eq:approx_crit_def}
D(\Z/p^j\Z) := \cbm{x \in V(\Z/p^j\Z)}{\nabla f(x) \equiv 0\pmod{p^j}}
\end{align}
to denote the ``approximate critical points'' of $f$. For $\ol x \in (\Z/p^j\Z)^n$, we define
\ba
S_m(f)_{\ol x} = \sum\limits_{\substack{x \in V(\Z/p^m\Z)\\ x\equiv \ol x\ppmod{p^j}}}\e\rb{\frac{f(x)}{p^m}}.
\ea
Clearly we have
\ba
S_m(f) = \sum\limits_{\ol x \in (\Z/p^j\Z)^n} S_m(f)_{\ol x}.
\ea

\begin{thm}\label{thm:DF1.8a}
\cite[Theorem 1.8(a)]{DF1997} If $2j\leq m$, then $S_{\ol x} = 0$ unless $\ol x \in D(\Z/p^j\Z)$. Now suppose $m=2j$ or $2j+1$, and let $x\in (\Z/p^m\Z)^n$ map to $\ol x \in D(\Z/p^j\Z)$. If $m=2j$, then we have
\ba
S_m(f)_{\ol x} = p^{mn/2}\e\rb{\frac{f(x)}{p^m}}.
\ea
If $m=2j+1$, then we have
\ba
S_m(f)_{\ol x} = p^{(m-1)n/2}\e\rb{\frac{f(x)}{p^m}} \sum\limits_{y \in (\Z/p\Z)^n}\e\rb{\frac{\frac{1}{2} y^T H_x y + p^{-j} \nabla f(x) \cdot y}{p}},
\ea
where $H_x$ is the Hessian matrix of $f$ at $x$. In particular, if we let $t$ denote the maximum value of $n - \rank_{\F_p} H_{\ol x}$ for $\ol x\in D(\Z/p^j\Z)$, then $\vb{S}\leq \vb{D(\Z/p^j\Z)} p^{(mn+t)/2}$.
\end{thm}

Now we apply the stationary phase method. Let $f(x,y) = \frac{m_1}{x} + \frac{m_2 x^2}{y} + n_2 y$. Consider the sum
\ba
S = \sum\limits_{x, y\in \rb{\Z/p^s\Z}^\times}\e\rb{\frac{f(x,y)}{p^s}} = p^{-s} \Kl_p \rb{n_{s_\alpha s_\beta, 2s, s}, \psi, \psi'}.
\ea
Let $j\geq 1$ be such that $2j\leq s$. Define as in \eqref{eq:approx_crit_def}
\begin{align*}
D\rb{\Z/p^j\Z} &= \cbm{(x,y) \in \rb{\Z/p^j\Z}^\times \times \rb{\Z/p^j\Z}^\times}{\nabla f(x,y) \equiv 0\pmod{p^j}}\\
&= \cbm{\rb{x,y} \in \rb{\Z/p^j\Z}^\times \times \rb{\Z/p^j\Z}^\times}{\begin{array}{l} 2m_2 x^3 \equiv m_1y \pmod{p^j},\\
m_2 x^2 \equiv n_2 y^2 \pmod{p^j}\end{array}}.
\end{align*}

It is straightforward to check that $\vb{D\rb{\Z/p^j\Z}} \leq 4$, and $H_{x,y}$ is invertible over $\F_p$ for all $(x,y)\in D\rb{\Z/p^j\Z}$, so $\rank_{\F_p} H_{x,y} = 2$. So we deduce from \Cref{thm:DF1.8a} that
\ba
\vb{\Kl_p\rb{n_{s_\alpha s_\beta, r,s}, \psi, \psi'}} \leq 4p^{2s}.
\ea

Now it remains to tackle the case $p=2$, $r=2s-1$. As $p$ is fixed, it suffices to prove the bound for sufficiently large $s$, so we can always use the stationary phase method. Let $f(x,y) = \frac{2m_1}{x} + \frac{m_2x^2}{y} + n_2 y$. Consider the sum
\ba
S = \sum\limits_{x,y\in(\Z/p^s\Z)^\times}\e\rb{\frac{f(x,y)}{p^s}} = p^{-s+1} \Kl_p(n_{s_\alpha s_\beta, 2s-1,s},\psi,\psi'). 
\ea
Let $j\geq 1$ be such that $2j\leq s$. Define as in \eqref{eq:approx_crit_def}
\begin{align*}
D\rb{\Z/p^j\Z} &= \cbm{(x,y) \in \rb{\Z/p^j\Z}^\times \times \rb{\Z/p^j\Z}^\times}{\nabla f(x,y) \equiv 0\pmod{p^j}}\\
&= \cbm{\rb{x,y} \in \rb{\Z/p^j\Z}^\times \times \rb{\Z/p^j\Z}^\times}{\begin{array}{l} 2m_2 x^3 \equiv 2m_1y \pmod{p^j},\\
m_2 x^2 \equiv n_2 y^2 \pmod{p^j}\end{array}}.
\end{align*}
Then we have $\vb{D\rb{\Z/p^j\Z}} \leq 16$. The Hessian $H_{x,y}$ is not invertible, but nevertheless we have from \Cref{thm:DF1.8a} that
\ba
\vb{\Kl_p\rb{n_{s_\alpha s_\beta, 2s-1,s}, \psi, \psi'}} \leq 64p^{2s-1}.
\ea
This finishes the proof of the bound for $\Kl_p\rb{n_{s_\alpha s_\beta, r,s}, \psi, \psi'}$.

\subsection{Bound for $\Kl_p\rb{n_{s_\beta s_\alpha, r, s}, \psi, \psi'}$}

As we have mentioned in \Cref{section:Sp4Kloosterman}, the Kloosterman sum $\Kl_p\rb{n_{s_\beta s_\alpha, r, s}, \psi, \psi'}$ differs from the $\GL(3)$ Kloosterman sum $S\rb{n_1, m_1, m_2; p^r, p^{s-r}}$ just by a factor of $p^r$. So our bound immediately follows from the estimate given by Larsen \cite[Appendix]{BFG1988}, whose proof we omit here.

\subsection{Bound for $\Kl_p\rb{n_{s_\alpha s_\beta s_\alpha, r, s}, \psi, \psi'}$}

We make use of the decomposition for Kloosterman sums in \Cref{section:stratification} to obtain a non-trivial bound for $\Kl_p\rb{n_{s_\alpha s_\beta s_\alpha, r, s}, \psi, \psi'}$.

Let $w = s_\alpha s_\beta s_\alpha$, and $n = n_{s_\alpha s_\beta s_\alpha, r, s}$. Note that we have $s\leq 2r$. Then $\Delta_w = \cb{\alpha}$, and
\ba
A_w(\ell) = \rb{\Z/p^\ell\Z}^2 \times \rb{\Z/p^\ell\Z}.
\ea
Let $t = \diag\rb{a_1, a_2, ca_1^{-1}, ca_2^{-1}}\in\mc T$. Then $s = n^{-1}tn = \diag\rb{ca_1^{-1}, a_2, a_1, ca_2^{-1}}$. We compute
\ba
\kappa'_1\rb{t*x} = ca_1^{-1}a_2^{-1} \kappa_1'(x).
\ea
So
\ba
V_w(\ell) = \cbm{(\lambda,\lambda') \in A_w(\ell)^\times}{\lambda_1\lambda_2\lambda'_1= 1}.
\ea
If $\theta:A_w(\ell) \to \C^\times$ is given by
\ba
\theta(\lambda, \lambda') =\e\rb{\frac{n_1\lambda_1+n_2\lambda_2}{p^\ell}}\e\rb{\frac{n'_1\lambda'_1}{p^\ell}}, \quad n_1, n_2, n'_1\in\Z,
\ea
then
\begin{align}\label{eq:abaKloosterman_GL2decomp}
S_w\rb{\theta, \ell} = \sum\limits_{\lambda_2\in\rb{\Z/p^\ell\Z}^\times} \e\rb{\frac{n_2\lambda_2}{p^\ell}} S\rb{n_1\lambda_2^{-1}, n'_1; p^\ell}.
\end{align}

Suppose $x_{a,b}^{v_3} \in X(n)$ has Plücker coordinates
\ba
\rb{v_1, v_2, v_3, v_4; v_{14}} = \rb{p^r, p^{r-a}, v_3, p^{r-b}; p^s}. 
\ea
Let $\delta = \rb{p^{r-a}, p^a v_3 + p^{r-b}}$. Then $v_{14} = \frac{p^{r+a}}{\delta}$. This says $s-r \leq a \leq \frac{s}{2}$, $b\leq r$. Then $\delta = p^{r+a-s}$. Then
\ba
u'\rb{x_{a,b}^{v_3}} = \bp 1 & p^{-a} & v_3p^{-r} & p^{-b}\\ & 1 & p^{-b}\\ &&1\\&&-p^{-a}&1\ep \pmod{ U\rb{\Z_p}}. 
\ea
Let $X_{a,b}^{v_3}(n) = \mc T * x_{a,b}^{v_3}$, and define
\ba
S_{a,b}^{v_3} \rb{n,\psi,\psi'} = \sum\limits_{x \in X_{a,b}^{v_3}(n)} \psi\rb{u(x)} \psi'\rb{u'(x)}.
\ea
We also let
\ba
X_{a,b}(n) = \coprod\limits_{\substack{v_3\pmod{p^r}\\ \rb{p^{r-a}, p^a v_3 + p^{r-b}} = p^{r+a-s}}} X_{a,b}^{v_3}(n),
\ea
and
\ba
S_{a,b}\rb{n,\psi,\psi'} = \sum\limits_{x\in X_{a,b}(n)} \psi\rb{u(x)} \psi'\rb{u'(x)}.
\ea
Let $x\in X(n)$, with Plücker coordinates $v_2 = v_{2,x}$, $v_4 = v_{4,x}$. Then $\ord_p(v_{2,x}) = r-a$, $\ord_p(v_{4,x}) = r-b$ for some $s-r \leq a \leq s/2$, $0\leq b\leq r$. So $x$ lies in the $\mc T$-orbit of $x_{a,b}^{v_3}$ for some $v_3\pmod{p^r}$, and hence $x \in X_{a,b}(n)$. This gives a partition
\ba
X\rb{n} = \coprod\limits_{\substack{s-r\leq a \leq s/2\\ 0\leq b\leq r}} X_{a,b}(n).
\ea
As $r\geq \frac{s}{2} \geq a$, $r\geq b$, we see that $u(x), u'(x)$ have entries in $p^{-r}\Z_p/\Z_p$ for all $x \in X(n)$. Let $\mc S_{a,b}$ be a finite subset of $\Z_p$ such that
\ba
X_{a,b}(n) = \coprod\limits_{v_3\in \mc S_{a,b}} X_{a,b}^{v_3} (n).
\ea

By \Cref{thm:Stevens4.10}, we have
\ba
S_{a,b}\rb{n,\psi,\psi'} = p^{-4r} \rb{1-p^{-1}}^{-2} \sum\limits_{v_3\in \mc S_{a,b}} \vb{X_{a,b}^{v_3}(n)} S_w \rb{\theta_{a,b}^{v_3}; 2r},
\ea
where
\ba
\theta_{a,b}^{v_3} (\lambda,\lambda') =\e\rb{\frac{m_2u\lambda_2}{p^s}}\e\rb{\frac{m_1\hat v_2 \lambda_1 + n_1 p^{r-a} \lambda'_1}{p^r}},
\ea
with $\hat v_2$ and $u$ given as in \eqref{eq:abaKloosterman_v2hat} and \eqref{eq:abaKloosterman_u}. By \eqref{eq:abaKloosterman_GL2decomp}, we have
\begin{align}\label{eq:abaKloosterman_theta_sum}
S_w \rb{\theta_{a,b}^{v_3}; 2r} = \sum\limits_{x, y\in\rb{\Z/p^{2r}\Z}^\times} \e\rb{\frac{m_2u x}{p^s}} \e\rb{\frac{m_1\hat v_2 \ol{x}y + n_1p^{r-a}\ol{y}}{p^r}}.
\end{align}
Since the size of the $\mc T$-orbit of $x_{a,b}^{v_3}$ is bounded by $p^{a+b}$, we have
\begin{align}\label{eq:abaKloosterman_Xab}
\sum\limits_{v_3\in\mc S_{a,b}} \vb{X_{a,b}^{v_3}(n)} \leq \vb{\mc S_{a,b}} p^{a+b} \leq p^{r+a+b}. 
\end{align}

We estimate the size of $S_w \rb{\theta_{a,b}^{v_3}; 2r}$ below. We start by computing the order of $\hat v_2$ and $u$ in \eqref{eq:abaKloosterman_theta_sum}. From \eqref{eq:abaKloosterman_v2hat}, it is clear that $\ord_p\rb{\hat v_2} = s-a$. Now we consider $\ord_p(u)$. If $a\neq \frac{s}{2}$, then we have (after putting $v'_2 = \ol{v'_2} = 1$)
\ba
u = &p^{a+r-s} \rb{-p^av_3 + v_4} + \ol{V'} v_3^2p^{2a}\\
= &p^{a+r-s} \rb{p^av_3 + v_4} - 2 v_3 p^{2a+r-s} + \ol{V'} v_3^2p^{2a}\\
= &p^{2a+2r-2s} V' - 2 v_3 p^{2a+r-s} + \ol{V'} v_3^2p^{2a}\\
= &p^{2a} \ol{V'} \rb{p^{2r-2s} V'^2 - 2p^{r-s}v_3 V' + v_3^2}\\
= &p^{2a} \ol{V'} \rb{p^{r-s}V'-v_3}^2\\
= &p^{2a} \ol{V'} \rb{p^{-a} v_4}^2\\
= &v_4^2 \ol{V'}.
\ea
So $\ord_p(u) = 2\rb{r-b}$. If $a=\frac{s}{2}$, then (again we set $v'_2 = \ol{v'_2} = 1$)
\begin{align}\label{eq:abaKloosterman_u_criterion}
u = &-v_3 p^{2a+r-s} + v_4 p^{a+r-s} = p^{a+r-s} \rb{2v_4 - \rb{p^a v_3 + v_4}}. 
\end{align}
This form will be useful in computing $\ord_p(u)$, when more conditions are given. 

Case I: Suppose $s<r$. We deduce from \eqref{eq:abaKloosterman_v2hat} that $\ord_p(v_3) = 0, \ord_p(v_4) = a$, so only terms with $r=a+b$ contribute. When $a \neq \frac{s}{2}$, we have $\ord_p(u) = 2\rb{r-b} = 2a$. When $a = \frac{s}{2}$, we can still take $\ord_p(u) = s = 2a$. So $\ord_p(u) = 2a$ always holds. 

\ber
\item Suppose $a\leq \frac{2s-r}{3}$. Write $u = p^{2a} u'$. Let 
\ba
t = \min\cb{\ord_p(m_2), \ord_p(m_1)+2s-r-3a, \ord_p(n_1) + s-3a},
\ea
and
\ba
f(x,y) = p^{-t} \rb{m_2 u' y + \frac{m_1 \hat v_2 p^{s-r-2a} x}{y} + \frac{n_1 p^{s-3a}}{x}} = m'_2 y + \frac{m'_1 x}{y} + \frac{n'_1}{x},
\ea
where $m'_1 = m_1 \hat v_2 p^{s-r-2a-t}$, $m'_2 = m_2 u' p^{-t}$, $n'_1 = n_1p^{s-3a-t}$. Consider the sum
\ba
S = \sum\limits_{x,y\in(\Z/p^{s-2a-t}\Z)^\times} e\rb{\frac{f(x,y)}{p^{s-2a-t}}} = p^{2s-4a-4r-2t} S_w\rb{\theta_{a,b}^{v_3}; 2r}. 
\ea

When $s-2a-t>1$, let $j\geq 1$ be such that $2j\leq s-2a-t$. Define as in \eqref{eq:approx_crit_def}
\begin{align*}
D\rb{\Z/p^j\Z} &= \cbm{(x,y) \in \rb{\Z/p^j\Z}^\times \times \rb{\Z/p^j\Z}^\times}{\nabla f(x,y) \equiv 0\pmod{p^j}}\\
&= \cbm{\rb{x,y} \in \rb{\Z/p^j\Z}^\times \times \rb{\Z/p^j\Z}^\times}{\begin{array}{l} m'_1 x^2 \equiv n'_1y \pmod{p^j},\\
m'_2 y^2 \equiv m'_1 x \pmod{p^j}\end{array}}.
\end{align*}
Note that at least one of $m'_1$, $m'_2$ and $n'_1$ is not divisible by $p$. It then follows that $D\rb{\Z/p^j\Z}$ is empty unless $\ord_p(m_2) = \ord_p(m_1)+2s-r-3a = \ord_p(n_1) + s-3a = t$. But then 
\ba
S = p^{4a+2t-2s} \Kl_p(n_{s_\beta s_\alpha, s-2a-t, 3s-6a-3t}, \psi_{m'_1,m'_2}, \psi_{n'_1,0}),
\ea
with $p\nmid m'_1m'_2n'_1$. So it follows from the bound for $\Kl_p(n_{s_\beta s_\alpha,r,s},\psi,\psi')$ that
\begin{align}\label{eq:abaKloosterman_1a_theta}
\vb{S_w\rb{\theta_{a,b}^{v_3}; 2r}} \ll p^{4r+2a-s+t}.
\end{align}

Now suppose $s-2a-t=1$. If $p \nmid m'_1m'_2n'_1$, then it follows by the theorem of Deligne \cite[Sommes. trig., 7.1.3]{Deligne1977} that $S\ll p$. When $p$ divides some (but not all) of $m'_1$, $m'_2$, $n'_1$, then the sum reduces to a Ramanujan sum, and is easily evaluated that $S\ll p$ as well. So the bound \eqref{eq:abaKloosterman_1a_theta} also holds for this case.

\begin{rmk}
\Cref{thm:AS}, itself a generalisation of Deligne's theorem, also applies to give the same bound.
\end{rmk}

The bounds for $S_w\rb{\theta_{a,b}^{v_3}; 2r}$ in other cases are obtained analogously, and we shall omit the repetitive computations thereafter.

\item
Suppose $a>\frac{2s-r}{3}$. Write $\hat v_2 = p^{s-a} \hat v'_2$. Let 
\ba
t = \min\cb{\ord_p(m_2)+r+3a-2s, \ord_p(m_1), \ord_p(n_1)+r-s},
\ea
and
\ba
f(x,y) = p^{-t} \rb{m_2 u p^{r+a-2s} y+\frac{m_1 \hat v'_2 x}{y} + \frac{n_1 p^{r-s}}{x}} = m'_2 y + \frac{m'_1 x}{y} + \frac{n'_1}{x},
\ea
where $m'_1 \hat v'_2 p^{-t}$, $m'_2 = m_2 u p^{r+a-2s-t}$, $n'_1 = n_1 p^{r-s-t}$. Then we have
\ba
S = \sum\limits_{x,y\in (\Z/p^{r+a-s-t}\Z)^\times} e\rb{\frac{f(x,y)}{p^{r+a-s-t}}} = p^{2a-2r-2s-2t} S_w\rb{\theta_{a,b}^{v_3}; 2r}.
\ea
Then we obtain analogously
\ba
\vb{S_w\rb{\theta_{a,b}^{v_3}; 2r}} \ll p^{3r-a+s+t}. 
\ea
\ee

Note that we have $\rb{p^{r-a}, p^a \rb{v_3+1}} = p^{r+a-s}$. A necessary condition for this to hold is that $p^{r-s} \mid v_3+1$. So $\vb{\mc S_{a,b}} \leq p^s$. So, from \eqref{eq:abaKloosterman_Xab} we actually have
\ba
\sum\limits_{v_3\in \mc S_{a,b}} \vb{X_{a,b}^{v_3}(n)} \leq p^{s+a+b}. 
\ea
Hence
\ba
\vb{\Kl_p\rb{n,\psi,\psi'}} &\leq \sum\limits_{\substack{0\leq a \leq s/2\\ b = r-a}} \vb{S_{a,b}\rb{n,\psi,\psi'}}\\
&\ll \sum\limits_{\substack{0\leq a \leq s/2\\ b = r-a}} p^{-4r} p^{s+a+b} S_w\rb{\theta_{a,b}^{v_3};2r}\\
&\ll \sum\limits_{\substack{0\leq a \leq s/2}} \min\cb{p^{r+2a+\ord_p(m_2)}, p^{s-a+\min\cb{s+\ord_p(m_1), r+\ord_p(n_1)}}}\\
&\ll p^{\frac{r}{3} + \frac{2s}{3} + \frac{2}{3} \min\cb{\ord_p(m_1)+s, \ord_p(n_1)+r} + \frac{1}{3} \ord_p(m_2)}. 
\ea

Case II: Suppose $s=r$. We deduce from \eqref{eq:abaKloosterman_v2hat} that when $a\neq 0$, then $\ord_p(v_3) = 0, \ord_p(v_4) \geq a$. So, only terms with $r\geq a+b$ contribute. When $a\neq \frac{s}{2}$, we have $\ord_p(u) = 2\rb{r-b}$. When $a=\frac{s}{2}$, we still have $\ord_p(u) \leq s = 2\rb{r-b}$. So $\ord_p(u) \leq 2\rb{r-b}$ always holds. We compute
\ba
\vb{S_w\rb{\theta_{a,b}^{v_3}; 2r}} \ll p^{2r} \min\cb{p^{3r-2b+\ord_p(m_2)}, p^{2r-a+\min\cb{\ord_p(m_1), \ord_p(n_1)}}}. 
\ea
Hence
\ba
\vb{\Kl_p\rb{n,\psi,\psi'}} &\leq \sum\limits_{\substack{0\leq a \leq r/2\\ b \leq r-a}} \vb{S_{a,b}\rb{n,\psi,\psi'}}\\
&\ll \sum\limits_{\substack{0\leq a \leq s/2\\ b \leq r-a}} p^{-4r} p^{r+a+b} \rb{p^{2r} \min\cb{p^{3r-2b+\ord_p(m_2)}, p^{2r-a+\min\cb{\ord_p(m_1), \ord_p(n_1)}}}}\\
&\ll \sum\limits_{\substack{0\leq a \leq s/2\\ b \leq r-a}} p^{-r+a+b} \min\cb{p^{3r-2b+\ord_p(m_2)}, p^{2r-a+\min\cb{\ord_p(m_1), \ord_p(n_1)}}}\\
&\ll p^{\frac{5r}{3} + \frac{2}{3}\min\cb{\ord_p(m_1), \ord_p(n_1)} + \frac{1}{3} \ord_p(m_2)}. 
\ea

Case III: $2r>s>r$. We consider the following subcases:
\bea
\item Suppose $a=s-r$. Then the condition $\rb{p^{r-a}, p^a v_3 + p^{r-b}} = 1$ implies $b=r$. So $\ord_p(u) = 0$. We deduce from \eqref{eq:abaKloosterman_v2hat} that $\hat v_2 = 0$. So 
\ba
\vb{S_w\rb{\theta_{a,b}^{v_3}; 2r}} \ll p^{3r-s} \min\cb{p^{r+\ord_p(m_2)}, p^{2r+\ord_p(n_1)}}.
\ea

\item Suppose $s-r < a < \frac{s}{2}$. Then we deduce from \eqref{eq:abaKloosterman_v2hat} that $\ord_p(v_3) = 0$, $\ord_p(v_4) \geq a$. So $a+b\leq r$. Meanwhile, as $r+a-s < a$, the condition $\rb{p^{r-a}, p^a v_3 + p^{r-b}} = p^{r+a-s}$ says $r-b = r+a-s$, which implies $a+b = s >r$, a contradiction. So there is no contribution from this case.

\item Suppose $a = \frac{s}{2}$. Again, we deduce from \eqref{eq:abaKloosterman_v2hat} that $\ord_p(v_3) = 0$, $\ord_p(v_4) \geq a$. So, only terms with $r\geq a+b$ contribute. In this case, we don't have a good bound for $\ord_p(u)$. So
\ba
\vb{S_w\rb{\theta_{a,b}^{v_3}; 2r}} \ll p^{3r+\min\cb{\frac{s}{2}+\ord_p(m_1), r-\frac{s}{2}+\ord_p(n_1)}}.
\ea
\ee 
Hence
\ba
\vb{\Kl_p\rb{n,\psi,\psi'}} \leq &\sum\limits_{\substack{s-r\leq a \leq s/2\\ b \leq r-a}} \vb{S_{a,b}\rb{n,\psi,\psi'}}\\
\ll &\sum\limits_{\substack{a=s-r\\ b=r}} p^{-4r} p^{r+a+b} \rb{p^{3r-s} \min\cb{p^{r+\ord_p(m_2)}, p^{2r+\ord_p(n_1)}}}\\
&+\sum\limits_{\substack{a=s/2\\ b\leq r-s/2}} p^{-4r} p^{r+a+b} \rb{p^{3r+\min\cb{\frac{s}{2}+\ord_p(m_1), r-\frac{s}{2}+\ord_p(n_1)}}}\\
\ll &p^{r+\min\cb{\ord_p(m_2), r+\ord_p(n_1)}} + p^{r+\min\cb{\frac{s}{2}+\ord_p(m_1), r-\frac{s}{2}+\ord_p(n_1)}}.
\ea

Case IV: $s = 2r$. In this case, we have $a = r$, and $v_3, v_4 = p^{r-b}$ is arbitrary. We deduce from \eqref{eq:abaKloosterman_v2hat} that $\hat v_2 = 0$. We consider the following subcases:
\bea
\item Suppose $b=0$. We may assume $v_4=0$. Then $\ord_p(u) = r+\ord_p(v_3)$. We compute
\ba
\vb{S_w\rb{\theta_{a,b}^{v_3}; 2r}} \ll p^r \min\cb{p^{2r+\ord_p(v_3)+\ord_p(m_2)}, p^{2r+\ord_p(n_1)}}.
\ea
Fix $c\leq r$. Then
\ba
\vb{\cbm{v_3\in\mc S_{a,b}}{\ord_p(v_3) = c}} \leq p^{r-c}.
\ea
\item Suppose $b>0$. Then $\ord_p(u) = r-b$. We compute
\ba
\vb{S_w\rb{\theta_{a,b}^{v_3}; 2r}} \ll p^r \min\cb{p^{2r-b+\ord_p(m_2)}, p^{2r+\ord_p(n_1)}}.
\ea
\ee
Hence
\ba
\vb{\Kl_p\rb{n,\psi,\psi'}} \leq &\sum\limits_{\substack{a = r/2\\ b\leq r}} \vb{S_{a,b}\rb{n,\psi,\psi'}}\\
\ll &\sum\limits_{\substack{a=r/2\\ b=0\\ c\leq r}} p^{-4r} p^{r-c+a+b} \rb{p^r \min\cb{p^{2r+c+\ord_p(m_2)}, p^{2r+\ord_p(n_1)}}}\\
&+\sum\limits_{\substack{a=r/2\\ b>0}} p^{-4r} p^{r+a+b} \rb{p^r \min\cb{p^{2r-b+\ord_p(m_2)}, p^{2r+\ord_p(n_1)}}}\\
\ll &p^{r+\min\cb{\ord_p(m_2), r+\ord_p(n_1)}}.
\ea
This finishes the proof of the bound for $\Kl_p\rb{n_{s_\alpha s_\beta s_\alpha,r,s}, \psi, \psi'}$.

\subsection{Bound for $\Kl_p\rb{n_{s_\beta s_\alpha s_\beta, r, s}, \psi, \psi'}$}

We make use of the decomposition for Kloosterman sums in \Cref{section:stratification} to obtain a non-trivial bound for $\Kl_p\rb{n_{s_\beta s_\alpha s_\beta, r, s}, \psi, \psi'}$.

Let $w = s_\beta s_\alpha s_\beta$, and $n = n_{s_\beta s_\alpha s_\beta, r, s}$. Note that we have $r\leq s$. Then $\Delta_w = \cb{\beta}$, and
\ba
A_w\rb{\ell} = \rb{\Z/p^\ell\Z}^2 \times \rb{\Z/p^\ell\Z}.
\ea
Let $t = \diag \rb{a_1, a_2, ca_1^{-1}, ca_2^{-1}} \in \mc T$. Then $s = n^{-1}tn = \diag \rb{ca_2^{-1}, ca_1^{-1}, a_2, a_1}$. We compute
\ba
\kappa'_2 \rb{t * x} = ca_1^{-2} \kappa'_2(x).
\ea
So
\ba
V_w(\ell) = \cbm{(\lambda,\lambda') \in A_w(\ell)^\times}{\lambda_1^2\lambda_2\lambda'_2 = 1}.
\ea
If $\theta: A_w(\ell) \to \C^\times$ is given by
\ba
\theta(\lambda,\lambda') = \e\rb{\frac{n_1\lambda_1+n_2\lambda_2}{p^\ell}} \e\rb{\frac{n'_2\lambda'_2}{p^\ell}}, \quad n_1, n_2, n'_2\in\Z,
\ea
then
\begin{align}\label{eq:babKloosterman_GL2decomp}
S_w\rb{\theta, \ell} = \sum\limits_{\lambda_1\in\rb{\Z/p^\ell\Z}^\times} \e\rb{\frac{n_1\lambda_1}{p^\ell}} S \rb{n_2 \lambda_1^{-2}, n'_2; p^\ell}.
\end{align}

Suppose $x_{a,b}^{v_{23}} \in X(n)$ has Plücker coordinates
\ba
\rb{v_{12}, v_{13}, v_{14}, v_{23}} = \rb{p^s, p^{s-a}, p^{s-b}, v_{23}}. 
\ea
The condition $(v_{12}, v_{14}) \mid v_{13}^2$ says $s-b \leq 2\rb{s-a}$, that is, $2a-b\leq s$. We also have $\max\cb{a,b} = r$. Then
\ba
u'\rb{x_{a,b}^{v_{23}}} = \bp 1 & & -v_{23}p^{-s} & p^{-a}\\ &1&p^{-a}&p^{-b}\\&&1\\&&&1\ep \pmod{U\rb{\Z_p}}.
\ea
Let $X_{a,b}^{v_{23}} (n) = \mc T * x_{a,b}^{v_{23}}$, and define
\ba
S_{a,b}^{v_{23}} \rb{n, \psi, \psi'} = \sum\limits_{x\in X_{a,b}^{v_{23}} (n)} \psi\rb{u(x)} \psi'\rb{u'(x)}.
\ea
We also let
\ba
X_{a,b} (n) = \coprod\limits_{\substack{v_{23} \pmod{p^s}\\ \rb{p^{s-r}, v_{23}, p^{-b}v_{23}-p^{s-2a}}=1}} X_{a,b}^{v_{23}} (n),
\ea
and
\ba
S_{a,b}\rb{n, \psi, \psi'} = \sum\limits_{x\in X_{a,b}(n)} \psi\rb{u(x)} \psi'\rb{u'(x)}. 
\ea
Again we have a partition
\ba
X(n) = \coprod\limits_{\substack{0\leq a, b \leq r\\ \max\cb{a,b} = r\\ 2a-b \leq s}} X_{a,b} (n). 
\ea
It is clear that $u(x), u'(x)$ have entries in $p^{-s}\Z_p/\Z_p$ for all $x\in X(n)$. Let $\mc S_{a,b}$ be a finite subset of $\Z_p$ such that 
\ba
X_{a,b}(n) = \coprod\limits_{v_{23} \in \mc S_{a,b}} X_{a,b}^{v_{23}} (n).
\ea

By \Cref{thm:Stevens4.10}, we have
\ba
S_{a,b} \rb{n, \psi, \psi'} = p^{-2s} \rb{1-p^{-1}}^{-2} \sum\limits_{v_{23}\in \mc S_{a,b}} \vb{X_{a,b}^{v_{23}} (n)} S_w \rb{\theta_{a,b}^{v_{23}}; s},
\ea
where
\ba
\theta_{a,b}^{v_{23}} (\lambda,\lambda') = \e\rb{\frac{m_1 u \lambda_1}{p^r}} \e\rb{\frac{m_2 \hat v_{14} \lambda_2 + n_2 p^{s-b} \lambda'_2}{p^s}}. 
\ea
with $\hat v_{14}$ and $u$ given as in \eqref{eq:babKloosterman_u} and \eqref{eq:babKloosterman_v14hat}. By \eqref{eq:babKloosterman_GL2decomp}, we have
\begin{align}\label{eq:babKloosterman_theta_sum}
S_w\rb{\theta_{a,b}^{v_{23}}; s} = \sum\limits_{x,y \in \rb{\Z/p^s\Z}^\times} \e\rb{\frac{m_1u\ol{x}}{p^r}} \e\rb{\frac{m_2 \hat v_{14} x^2 \ol{y} + n_2 p^{s-b} y}{p^s}}.
\end{align}
Since the size of the $\mc T$-orbit of $x_{a,b}^{v_{23}}$ is bounded by $p^{a+b}$, we have
\begin{align}\label{eq:babKloosterman_Xab}
\sum\limits_{v_{23}\in\mc S_{a,b}} \vb{X_{a,b}^{v_{23}} (n)} \leq \vb{\mc S_{a,b}} p^{a+b} \leq p^{s+a}.
\end{align}

We estimate the size of $S_w \rb{\theta_{a,b}^{v_{23}}; s}$. We start by computing the order of $\hat v_{14}$ and $u$ in \eqref{eq:babKloosterman_theta_sum}. From \eqref{eq:babKloosterman_u}, we see that
\begin{align}\label{eq:babKloosterman_u_criterion}
u p^{r-a} &\equiv v_{23} \pmod{p^r}, & u p^{r-b} &\equiv -p^{s-a} \pmod{p^r}. 
\end{align}
So, if $a=r$, then $u\equiv v_{23}\pmod{p^r}$, and if $b=r$, then $u\equiv -p^{s-a}\pmod{p^r}$. (Recall that $\max\cb{a,b}=r$.) Also, we know that 
\begin{align}\label{eq:babKloosterman_v23criterion}
v_{23} = -p^{s-2a+b} + \beta p^b
\end{align}
for some $\beta\in \Z$ such that $\rb{\beta, p^{s-2r+b}} = 1$ (see \cite[Section 3.2]{Man2020}). Meanwhile, from \eqref{eq:babKloosterman_v14hat}, we see that unless $r=s$, we have $\ord_p\rb{\hat v_{14}} = 2r-b$.

Case I: Suppose $r<\frac{s}{2}$. We deduce from \eqref{eq:babKloosterman_v23criterion} that $\ord_p(v_{23}) = b$. From \eqref{eq:babKloosterman_u_criterion}, we deduce $a\geq b$. So we actually have $a=r$, and then $\ord_p(u) = b$. 

\ber
\item Suppose $b\leq \frac{3r-s}{2}$. Write $u = p^b u'$. Let 
\ba
t = \min\cb{\ord_p(m_1), \ord_p(m_2)+3r-2b-s, \ord_p(n_2)+r-2b}
\ea
and
\ba
f(x,y) = p^{-t} \rb{\frac{m_1u'}{x} + \frac{m_2 \hat v_{14} p^{r-b-s} x^2}{y} + n_2 p^{r-2b}y} = \frac{m'_1}{x} + \frac{m'_2 x^2}{y} + n'_2 y,
\ea
where $m'_1 = m_1 u' p^{-t}$, $m'_2 = m_2 \hat v_{14} p^{r-b-s-t}$, $n'_2 = n_2 p^{r-2b-t}$. Consider the sum
\ba
S = \sum\limits_{x,y\in(\Z/p^{r-b-t}\Z)^\times} e\rb{\frac{f(x,y)}{p^{r-b-t}}} = p^{2r-2s-2b-2t} S_w\rb{\theta_{a,b}^{v_{23}}; s}.
\ea
When $r-b-t>1$, let $j\geq 1$ be such that $2j\leq r-b-t$. Define as in \eqref{eq:approx_crit_def}
\ba
D(\Z/p^j\Z) = &\cbm{(x,y)\in(\Z/p^j\Z)^\times \times (\Z/p^j\Z)^\times}{\nabla f(x,y) \equiv 0\pmod{p^j}}\\
= &\cbm{(x,y)\in(\Z/p^j\Z)^\times \times (\Z/p^j\Z)^\times}{\begin{array}{l} 2m'_2 x^3 \equiv m'_1 y \pmod{p^j}\\ m'_2 x^2 \equiv n'_2 y^2 \pmod{p^j}\end{array}}.
\ea
Note that at least one of $m'_1$, $m'_2$ and $n'_2$ is not divisible by $p$. It then follows that when $p$ is odd, $D(\Z/p^j\Z)$ is empty unless $\ord_p(m_1) = \ord_p(m_2)+3r-2b-s = \ord_p(n_2)+r-2b = t$. But then
\ba
S = p^{t+b-r} \Kl_p(n_{s_\alpha s_\beta, 2r-2b-2t, r-b-t}, \psi_{m'_1,m'_2}, \psi_{0,n'_2}),
\ea
with $p\nmid m'_1m'_2n'_2$. So it follows from the bound for $\Kl_p(n_{s_\alpha s_\beta,r,s},\psi,\psi')$ that
\begin{align}\label{eq:babKloosterman_1a_theta}
\vb{S_w\rb{\theta_{a,b}^{v_{23}};s}} \ll p^{2s-r+b+t}.
\end{align}
When $p=2$, $D(\Z/p^j\Z)$ is empty unless $\ord_p(m_1)-1 = \ord_p(m_2)+3r-2b-s = \ord_p(n_2)+r-2b = t$. Then
\ba
S = p^{t+b-r+1} \Kl_p(n_{s_\alpha s_\beta, 2r-2b-2t-1, r-b-t}, \psi_{m'_1/2,m'_2}, \psi_{0,n'_2}),
\ea
with $p \nmid (m'_1/2)m'_2n'_2$. Again, from the bound for $\Kl_p(n_{s_\alpha s_\beta,r,s},\psi,\psi')$, we see that \eqref{eq:babKloosterman_1a_theta} also holds for this case.

Now suppose $r-b-t=1$. If $p\nmid m'_1 m'_2 n'_1$, then it again follows from \Cref{thm:AS} that $\vb{S}\ll p$. When $p$ divides some (but not all) of $m'_1, m'_2, n'_1$, then the sum reduces to Gauß sums or Ramanujan sums, and is easily evaluated that $\vb{S}\ll p$ as well. So the bound \eqref{eq:babKloosterman_1a_theta} also holds for this case.

The bounds for $S_w\rb{\theta_{a,b}^{v_{23}}; s}$ in other cases are obtained analogously, and we shall omit the repetitive computations thereafter.

\item Suppose $b> \frac{3r-s}{2}$. Write $\hat v_{14} = p^{2r-b} \hat v'_{14}$. Let
\ba
t = \min\cb{\ord_p(m_1)+s+2b-3r, \ord_p(m_2), \ord_p(n_2)+s-2r},
\ea
and
\ba
f(x,y) = p^{-t} \rb{\frac{m_1up^{s+b-3r}}{x} + \frac{m_2 \hat v'_{14} x^2}{y} + n_2 p^{s-2r} y} = \frac{m'_1}{x} + \frac{m'_2 x^2}{y} + n'_2 y,
\ea
where $m'_1 = m_1 u p^{s+b-3r-t}$, $m'_2 = m_2 \hat v'_{14} p^{-t}$, $n'_2 = n_2 p^{s-2r-t}$. Then we have 
\ba
S = \sum\limits_{x,y\in(\Z/p^{s+b-2r-t}\Z)^\times} e\rb{\frac{f(x,y)}{p^{s+b-2r-t}}} = p^{2b-4r-2t} S_w\rb{\theta_{a,b}^{v_{23}}; s}.
\ea
Then we obtain analogously
\ba
\vb{S_w\rb{\theta_{a,b}^{v_{23}}; s}} \ll p^{s+2r-b+t}. 
\ea
\ee
Hence
\ba
\vb{\Kl_p\rb{n, \psi, \psi'}} &\leq \sum\limits_{\substack{a=r\\ 0\leq b\leq r}} \vb{S_{a,b}\rb{n,\psi,\psi'}}\\
&\ll \sum\limits_{\substack{a=r\\ 0\leq b\leq r}} p^{-2s} p^{s+a} \vb{S_w\rb{\theta_{a,b}^{v_{23}}; s}}\\
&\ll \sum\limits_{\substack{a=r\\ 0\leq b\leq r}} p^{-2s} p^{s+a} \rb{p^{s-r} \min\cb{p^{s+b+\ord_p(m_1)}, p^{r-b+\min\cb{2r+\ord_p(m_2), s+\ord_p(n_2)}}}}\\
&\ll p^{\frac{s}{2}+\frac{r}{2}+\frac{1}{2}\min\cb{2r+\ord_p(m_2), s+\ord_p(n_2)}+\frac{1}{2}\ord_p(m_1)}. 
\ea

Case II: Suppose $r=\frac{s}{2}$. We consider the following subcases:
\bea
\item Suppose $b=r$. From \eqref{eq:babKloosterman_u_criterion}, we may assume $u=0$. We compute
\ba
\vb{S_w\rb{\theta_{a,b}^{v_{23}}; s}} \ll p^{\frac{3s}{2}+\min\cb{\ord_p(m_2), \ord_p(n_2)}}. 
\ea

\item Suppose $b<r$. Then $a=r$. From \eqref{eq:babKloosterman_v23criterion}, we see that $v_{23} = \rb{\beta-1} p^b$ for some $\beta\in\Z$ such that $\rb{\beta, p^b} = 1$. So $\ord_p(v_{23}) \geq b$. And from \eqref{eq:babKloosterman_u_criterion}, we deduce that $\ord_p(u) = \ord_p(v_{23})$. We compute
\ba
\vb{S_w\rb{\theta_{a,b}^{v_{23}}; s}} \ll p^{s/2} \min\cb{p^{s+\ord_p(v_{23}) + \ord_p(m_1)}, p^{\frac{3s}{2}-b+\min\cb{\ord_p(m_2), \ord_p(n_2)}}}. 
\ea
\ee

Fix $c\geq b$. Then
\ba
\vb{\cbm{v_{23} \in \mc S_{a,b}}{\ord_p(v_{23}) = c}} \leq p^{s-c}. 
\ea
Hence
\begin{equation*}
\scalebox{0.98}{$
\begin{aligned}
\vb{\Kl_p\rb{n, \psi, \psi'}} \leq &\sum\limits_{\substack{a,b\leq r\\ \max\cb{a,b}=r}} \vb{S_{a,b}\rb{n, \psi, \psi'}}\\
\ll &\sum\limits_{\substack{b=r\\ a\leq r}} p^{-2s} p^{s+a} \rb{p^{\frac{3s}{2}+\min\cb{\ord_p(m_2), \ord_p(n_2)}}}\\
&+\sum\limits_{\substack{a=r\\ b<r\\ b\leq c\leq r}} p^{-2s} p^{s-c+a+b} \rb{p^{s/2} \min\cb{p^{s+\ord_p(v_{23}) + \ord_p(m_1)}, p^{\frac{3s}{2}-b+\min\cb{\ord_p(m_2), \ord_p(n_2)}}}}\\
\ll &p^{\frac{5s}{4} + \frac{1}{2}\ord_p(m_1) + \frac{1}{2}\min\cb{\ord_p(m_2), \ord_p(n_2)}}.
\end{aligned}$}
\end{equation*}

Case III: Suppose $s>r>\frac{s}{2}$. We consider the following subcases:
\bea
\item Suppose $b=r$. Then $\ord_p(u) = s-a$, and $\ord_p(\hat v_{14}) = r$. We compute
\ba
\vb{S_w\rb{\theta_{a,b}^{v_{23}}; s}} \ll p^{s-r} \min\cb{p^{2s-a+\ord_p(m_1)}, p^{r+\min\cb{r+\ord_p(m_2)}, s-r+\ord_p(n_2)}}.
\ea
\item Suppose $b<r$. Then $a=r$. Then from \eqref{eq:babKloosterman_v23criterion} we deduce that $\ord_p(v_{23}) = p^{s-2r+b}$, and hence $\ord_p(u) = p^{s-2r+b}$. We compute
\ba
\vb{S_w\rb{\theta_{a,b}^{v_{23}}; s}} \ll p^{s-r} \min\cb{p^{2s-2r+b+\ord_p(m_1)}, p^{r-b+\min\cb{2r+\ord_p(m_2), s+\ord_p(n_2)}}}.
\ea
\ee
Hence
\begin{equation*}
\scalebox{0.98}{$
\begin{aligned}
\vb{\Kl_p\rb{n, \psi, \psi'}} \leq &\sum\limits_{\substack{a,b\leq r\\ \max\cb{a,b}=r\\ 2a-b\leq s}} \vb{S_{a,b}\rb{n, \psi, \psi'}}\\
\ll &\sum\limits_{\substack{b=r\\ a\leq r}} p^{-2s} p^{s+a} \rb{p^{s-r} \min\cb{p^{2s-a+\ord_p(m_1)}, p^{r+\min\cb{r+\ord_p(m_2)}, s-r+\ord_p(n_2)}}}\\
&+\sum\limits_{\substack{a=r\\ 2r-s\leq b < r}} p^{-2s} p^{s+a} \rb{p^{s-r} \min\cb{p^{2s-2r+b+\ord_p(m_1)}, p^{r-b+\min\cb{2r+\ord_p(m_2), s+\ord_p(n_2)}}}}\\
&\ll p^{s-\frac{r}{2}+\frac{1}{2}\ord_p(m_1)+\frac{1}{2}\min\cb{2r+\ord_p(m_2), s+\ord_p(n_2)}}. 
\end{aligned}$}
\end{equation*}

Case IV: $r=s$. In this case we only have to consider terms with $b=r$. Indeed, if $b<r$, then $a=r$, and then by \eqref{eq:babKloosterman_u_criterion}, we see that $u p^{r-b} \equiv -1\pmod{p^r}$, which says $b=r$, a contradiction. When $b=r$, we have $\ord_p(u) = s-a$, and from \eqref{eq:babKloosterman_v14hat} we may assume $\hat v_{14} = 0$. We compute
\ba
\vb{S_w\rb{\theta_{a,b}^{v_{23}}; s}} \ll \min\cb{p^{2s-a+\ord_p(m_1)}, p^{s+\ord_p(n_2)}}.
\ea
Hence
\ba
\vb{\Kl_p\rb{n, \psi, \psi'}} \leq &\sum\limits_{\substack{b=s\\ a\leq s}} \vb{S_{a,b}\rb{n, \psi, \psi'}}\\
\ll &\sum\limits_{\substack{b=s\\ a\leq s}} p^{-2s} p^{s+a} \rb{\min\cb{p^{2s-a+\ord_p(m_1)}, p^{s+\ord_p(n_2)}}}\\
\ll &p^{s+\min\cb{\ord_p(m_1), \ord_p(n_2)}}.
\ea
This finishes the proof of the bound for $\Kl_p\rb{n_{s_\beta s_\alpha s_\beta,r,s}, \psi, \psi'}$.

\subsection{Bounds for $\Kl_p\rb{n_{w_0, r, s}, \psi, \psi'}$} We show that under the stratification introduced in \Cref{section:stratification}, $\Kl_p\rb{n_{w_0, r, s}, \psi, \psi'}$ decomposes into a sum of products of $\GL(2)$ Kloosterman sums. So the Kloosterman sum can be bounded using \eqref{eq:Kloosterman_GL2bound}.

Let $w = w_0$, and $n = n_{w_0, s, r}$. Then $\Delta_{w_0} = \Delta$, and 
\ba
A_{w_0}(\ell) = \rb{\Z/p^\ell\Z}^2 \times \rb{\Z/p^\ell\Z}^2.
\ea
Let $t = \diag\rb{a_1, a_2, ca_1^{-1}, ca_2^{-1}} \in \mc T$. Then $s = n^{-1}tn = \diag\rb{ca_1^{-1}, ca_2^{-1}, a_1, a_2}$. We compute
\ba
\kappa'_1(t*x) &= a_2a_1^{-1} \kappa'_1(x), & \kappa'_2(t*x) &= c a_2^{-2} \kappa'_2(x).
\ea
So
\ba
V_{w_0}(\ell) = \cbm{(\lambda,\lambda') \in A_{w_0}(\ell)^\times}{ \lambda_1\lambda'_1 = 1, \lambda_2\lambda'_2 = 1}.
\ea
If $\theta: A_{w_0}(\ell) \to \C^\times$ is given by
\ba
\theta(\lambda,\lambda') &= \prod\limits_{i=1}^2 \e\rb{\frac{n_i\lambda_i}{p^\ell}} \prod\limits_{i=1}^2 \e\rb{\frac{n'_i\lambda'_i}{p^\ell}}, & &n_1, n_2, n'_1, n'_2\in\Z,
\ea
then
\begin{align}\label{eq:Kloosterman_GL2decomp}
S_{w_0}\rb{\theta; \ell} = S \rb{n_1, n'_1; p^\ell} S \rb{n_2, n'_2; p^\ell}.
\end{align}

Suppose $x_{a,b}^{v_3, v_4, v_{13}}\in X(n)$ has Plücker coordinates
\ba
\rb{v_1, v_2, v_3, v_4; v_{12}, v_{13}, v_{14}} = \rb{p^r, p^{r-a}, v_3, v_4; p^s, v_{13}, p^{s-b}}. 
\ea
Note that this also says $r\geq a, s\geq b$. Then
\ba
u'\rb{x_{a,b}^{v_3, v_4, v_{13}}} = \bp 1&p^{-a} & v_3p^{-r} & v_4p^{-r}\\ &1&v_{13}p^{-s} & p^{-b}\\ &&1\\&&-p^{-a}&1\ep \pmod{U\rb{\Z_p}}. 
\ea
Let $X_{a,b}^{v_3, v_4, v_{13}}(n) = \mc T * x_{a,b}^{v_3, v_4, v_{13}}$, and define 
\ba
S_{a,b}^{v_3, v_4, v_{13}} \rb{n, \psi, \psi'} = \sum\limits_{x \in X_{a,b}^{v_3, v_4, v_{13}}(n)} \psi\rb{u(x)} \psi'\rb{u'(x)}. 
\ea
We also let
\ba
X_{a,b} (n) = \coprod\limits_{\substack{v_3, v_4 \ppmod{p^r}\\v_{13}\ppmod{p^s}\\\text{conditions}}} X_{a,b}^{v_3, v_4, v_{13}} (n),
\ea
and
\ba
S_{a,b} \rb{n, \psi, \psi'} = \sum\limits_{x \in X_{a,b}(n)} \psi\rb{u(x)} \psi'\rb{u'(x)}.
\ea
We have a partition
\ba
X(n) = \coprod\limits_{\substack{0\leq a\leq r\\ 0\leq b\leq s}} X_{a,b}(n). 
\ea
Now we consider cases $r\geq s$ and $r<s$ separately. 

\ber
\item Suppose $r> s$. As $r\geq a, r\geq s\geq b$, we see that $u(x), u'(x)$ have entries in $p^{-r}\Z_p/\Z_p$ for all $x\in X(n)$. Let $\mc S_{a,b}$ be a finite subset of $\Z_p^3$ such that 
\ba
X_{a,b}(n) = \coprod\limits_{(v_3, v_4, v_{13}) \in \mc S_{a,b}} X_{a,b}^{v_3, v_4, v_{13}} (n).
\ea 

By \Cref{thm:Stevens4.10}, we have
\ba
S_{a,b} \rb{n, \psi, \psi'} = p^{-2r} \rb{1-p^{-1}}^{-2} \sum\limits_{(v_3, v_4, v_{13}) \in \mc S_{a,b}} \vb{X_{a,b}^{v_3, v_4, v_{13}} (n)} S_{w_0} \rb{\theta_{a,b}^{v_3, v_4, v_{13}}; r},
\ea
where
\ba
\theta_{a,b}^{v_3, v_4, v_{13}} (\lambda,\lambda') = \e\rb{\frac{m_1\hat v_2\lambda_1 + n_1p^{r-a}\lambda'_1}{p^r}} \e\rb{\frac{m_2\hat v_{14} + n_2 p^{s-b}}{p^s}}. 
\ea
By \eqref{eq:Kloosterman_GL2decomp}, we have
\ba
S_{w_0} \rb{\theta_{a,b}^{v_3, v_4, v_{13}}; r} = S\rb{m_1\hat v_2, n_1\hat p^{r-a}; p^r} S\rb{m_2\hat v_{14} p^{r-s}, n_2p^{r-b}; p^r}. 
\ea
And we obtain a bound by applying \eqref{eq:Kloosterman_GL2bound}:
\ba
\vb{S_{w_0}\rb{\theta_{a,b}^{v_3, v_4, v_{13}}; r}} \leq 4 p^r \rb{\gcd\rb{m_1\hat v_2, n_1p^{r-a}, p^r} \gcd\rb{m_2\hat v_{14} p^{r-s}, n_2p^{r-b}, p^r}}^{1/2}. 
\ea

\item Suppose $s\geq r$. Then $u(x), u'(x)$ has entries in $p^{-s}\Z_p/\Z_p$ for all $x\in X(n)$. Again, by \Cref{thm:Stevens4.10} we have
\ba
S_{a,b}\rb{n,\psi,\psi'} = p^{-2s} \rb{1-p^{-1}}^{-2} \sum\limits_{(v_3, v_4, v_{13}) \in \mc S_{a,b}} \vb{X_{a,b}^{v_3, v_4, v_{13}} (n)} S_{w_0} \rb{\theta_{a,b}^{v_3, v_4, v_{13}}; s},
\ea
where
\ba
\theta_{a,b}^{v_3, v_4, v_{13}} (\lambda,\lambda') = \e\rb{\frac{\rb{m_1\hat v_2 p^{s-r}}\lambda_1 + \rb{m_2\hat v_{14}} \lambda_2 + \rb{n_1p^{s-a}}\lambda'_1 + \rb{n_2p^{s-b}}\lambda'_2}{p^s}}. 
\ea
By \eqref{eq:Kloosterman_GL2decomp}, we have
\ba
S_{w_0} \rb{\theta_{a,b}^{v_3, v_4, v_{13}}; s} = S\rb{m_1\hat v_2 p^{s-r}, n_1p^{s-a}; p^s} S\rb{m_2\hat v_{14}, n_2p^{s-b}; p^s}. 
\ea
Applying \eqref{eq:Kloosterman_GL2bound} gives
\ba
\vb{S_{w_0} \rb{\theta_{a,b}^{v_3, v_4, v_{13}}; s}} \leq 4 p^s \rb{\gcd\rb{m_1\hat v_2p^{s-r}, n_1p^{s-a}, p^s}, \gcd\rb{m_2\hat v_{14}, n_2p^{s-b}, p^s}}^{1/2}.
\ea
\ee

Now we give a bound to the size of $\Kl_p\rb{n, \psi, \psi'}$. To ease computation, we consider a relaxed bound by ignoring $\hat v_2$ and $\hat v_{14}$.

Suppose $r>s$. Then the bound says
\ba
\vb{S_{w_0}\rb{\theta_{a,b}^{v_3, v_4, v_{13}}; r}} &\leq 4 p^r \rb{\gcd\rb{m_1\hat v_2, n_1p^{r-a}, p^r} \gcd\rb{m_2\hat v_{14} p^{r-s}, n_2p^{r-b}, p^r}}^{1/2}\\
&\leq 4 p^r\rb{\vb{n_1n_2}_p^{-1} p^{2r-a-b}}^{1/2}\\
&= 4 p^{2r-\frac{a+b}{2}} \vb{n_1n_2}_p^{-1/2}. 
\ea

Note that
\ba
\sum\limits_{\rb{v_3, v_4, v_{13}} \in \mc S_{a,b}} \vb{X_{a,b}^{v_3, v_4, v_{13}}(n)} \leq \vb{\mc S_{a,b}} p^{a+b}.
\ea

Hence
\ba
\vb{\Kl_p\rb{n, \psi, \psi'}} &\leq \sum\limits_{\substack{a\leq r\\b\leq s}} \vb{S_{a,b} \rb{n,\psi, \psi'}}\\
&\leq \sum\limits_{\substack{a\leq r\\ b\leq s}} p^{-2r}\rb{1-p^{-1}}^{-2} 4\vb{n_1n_2}_p^{-1/2} \vb{\mc S_{a,b}} p^{2r+\frac{a+b}{2}}\\
&\ll \vb{n_1n_2}_p^{-1/2} \sum\limits_{\substack{a\leq r\\ b\leq s}} \vb{\mc S_{a,b}} p^{\frac{a+b}{2}}.
\ea

So it suffices to give an upper bound to $\vb{\mc S_{a,b}}$. Such bounds were computed in \cite[Section 5]{Man2020}. Note that we require $r\geq a+b$ in order to have $\mc S_{a,b}$ nonempty. 

Case I: Suppose $s-r+a\geq 0$.
\bea
\item If $s-2r+2a+b \geq 0$, then $\vb{\mc S_{a,b}}\leq p^{r+s-a-b}$.
\item If $s-2r+2a+b < 0$, then $\vb{\mc S_{a,b}}\leq p^{2s-b-\lceil\frac{s-b}{2}\rceil} \leq p^{3s/2-b/2}$.
\ee

Case II: Suppose $s-r+a<0$. Then $\vb{\mc S_{a,b}}\leq p^{2s-b-\lceil\frac{s-b}{2}\rceil} \leq p^{3s/2-b/2}$.

Combining the cases, we obtain
\ba
\sum\limits_{\substack{a\leq r\\ b\leq s}} \vb{\mc S_{a,b}} p^{\frac{a+b}{2}} &\leq \sum\limits_{\substack{r-s\leq a\leq r\\ 2r-2a-s\leq b\leq r-a}} p^{r+s-\frac{a}{2}-\frac{b}{2}} + \sum\limits_{\substack{r-s\leq a\leq r\\ b<2r-2a-s}} p^{\frac{3s}{2}+\frac{a}{2}} + \sum\limits_{\substack{a<r-s\\ b\leq s}} p^{\frac{3s}{2}+\frac{a}{2}}\\
&\ll \rb{s+1}p^{\frac{r}{2}+\frac{5s}{4}}.\\
\ea
Hence, we have for $r>s$
\begin{align}\label{eq:w0Kloosterman_rgs}
\vb{\Kl_p\rb{n, \psi, \psi'}} \ll \vb{n_1n_2}_p^{-1/2} \rb{s+1}p^{\frac{r}{2}+\frac{5s}{4}}.
\end{align}

For $r\leq s$, applying the same argument gives
\begin{align}\label{eq:w0Kloosterman_rls}
\vb{\Kl_p\rb{n, \psi, \psi'}} \ll \vb{n_1n_2}_p^{-1/2} \rb{s-r+1} p^{r+\frac{3s}{4}}.
\end{align}

Combining \eqref{eq:w0Kloosterman_rgs} and \eqref{eq:w0Kloosterman_rls}, we get
\begin{align}\label{eq:w0Kloosterman_nn}
\vb{\Kl_p\rb{n, \psi, \psi'}} \ll \vb{n_1n_2}_p^{-1/2} \rb{s+1} p^{\frac{r}{2} + \frac{3s}{4} + \frac{1}{2}\min\cb{r,s}}.
\end{align}
By \Cref{prp:w0Kloosterman_swap}, we can swap the characters, so
\begin{align}\label{eq:w0Kloosterman_mm}
\vb{\Kl_p\rb{n, \psi, \psi'}} \ll \vb{m_1m_2}_p^{-1/2} \rb{s+1} p^{\frac{r}{2} + \frac{3s}{4} + \frac{1}{2}\min\cb{r,s}}
\end{align}
as well. Combining \eqref{eq:w0Kloosterman_nn} and \eqref{eq:w0Kloosterman_mm} yields the bound for $\Kl_p(n_{w_0,r,s},\psi,\psi')$.

\subsection{Bounds for global Kloosterman sums} By combining the bounds for local Kloosterman sums $\Kl_p(n_{w,r,s}, \psi, \psi')$, we obtain bounds for global Kloosterman sums, and prove \Cref{thm:global_bound}.

\begin{proof}[Proof of \Cref{thm:global_bound}]
The statement for $\Kl(n_{\id}(c_1,c_2),\psi,\psi')$ follows because 
\ba
\Kl_p(n_{\id,r,s},\psi,\psi') = \begin{cases} 1 & \text{if } r=s=0,\\ 0 & \text{otherwise.}\end{cases}
\ea 

Meanwhile, $\Kl(n_{s_\alpha}(c_1,1),\psi,\psi')$ and $\Kl(n_{s_\beta}(1,c_2),\psi,\psi')$ are just classical Kloosterman sums. Combining local bounds for classical Kloosterman sums gives the global bounds, which read
\ba
\vb{\Kl_p\rb{n_{s_\alpha}(c_1,1), \psi, \psi'}} &\ll_\varepsilon (m_1,n_1,c_1)^{1/2} c_1^{1/2+\varepsilon},\\
\vb{\Kl_p\rb{n_{s_\beta}(1,c_2), \psi, \psi'}} &\ll_\varepsilon (m_2, n_2, c_2)^{1/2} c_2^{1/2+\varepsilon}.
\ea

For $\Kl(n_{s_\alpha s_\beta}(c_1,c_2),\psi,\psi')$ and $\Kl(n_{s_\beta s_\alpha}(c_1,c_2),\psi,\psi')$, we again combine the local bounds given in \Cref{thm:local_bound} yields the global bounds.

For $\Kl(n_{s_\alpha s_\beta s_\alpha}(c_1,c_2),\psi,\psi')$ and $\Kl(n_{s_\beta s_\alpha s_\beta}(c_1,c_2),\psi,\psi')$, the situation is more complicated, since the shapes of the local bounds depend on the relative size of $r,s$. Therefore, in order to obtain a global bound, we have to find an expression for the local bound that works for all values of $r,s$.

We start with $\Kl(n_{s_\alpha s_\beta s_\alpha}(c_1,c_2),\psi,\psi')$. For $s\leq r$, we have
\ba
\scalebox{0.91}{$\displaystyle \vb{\Kl_p\rb{n_{s_\alpha s_\beta s_\alpha, r,s}, \psi, \psi'}} \ll p^{\frac{r}{3}+\frac{4s}{3}+\frac{2}{3}\min\cb{\ord_p(m_1),\ord_p(n_1)}+\frac{1}{3} \ord_p(m_2)} \leq p^{\frac{4r}{3}+\frac{s}{3}+\frac{2}{3}\min\cb{\ord_p(m_1),\ord_p(n_1)}+\frac{1}{3} \ord_p(m_2)}.$}
\ea
For $r<s<2r$, we have
\ba
\vb{\Kl_p\rb{n_{s_\alpha s_\beta s_\alpha, r,s}, \psi, \psi'}} \ll p^{r+\ord_p(m_2)} + p^{r+\frac{s}{2}+\min\cb{\ord_p(m_1),\ord_p(n_1)}},
\ea
and we have inequalities
\ba
p^{r+\ord_p(m_2)} + p^{r+\frac{s}{2}+\min\cb{\ord_p(m_1),\ord_p(n_1)}} &\leq p^{r+\ord_p(m_2)} + p^{\frac{4r}{3}+\frac{s}{3}+\min\cb{\ord_p(m_1),\ord_p(n_1)}},\\
p^{r+\ord_p(m_2)} + p^{r+\frac{s}{2}+\min\cb{\ord_p(m_1),\ord_p(n_1)}} &\leq p^{s+\ord_p(m_2)} + p^{\frac{r}{6}+\frac{4s}{3}+\min\cb{\ord_p(m_1),\ord_p(n_1)}}.
\ea
For $s=2r$, we have
\ba
\vb{\Kl_p\rb{n_{s_\alpha s_\beta s_\alpha, r,s}, \psi, \psi'}} \ll p^{r+\ord_p(m_2)} = p^{\frac{s}{2}+\ord_p(m_2)}.
\ea
So we can conclude for $0\leq s \leq 2r$ that
\ba
\vb{\Kl_p\rb{n_{s_\alpha s_\beta s_\alpha, r,s}, \psi, \psi'}} \ll p^{\min\cb{\frac{4r}{3}+\frac{s}{3}, \frac{r}{3}+\frac{4s}{3}}+\ord_p(m_2)+\min\cb{\ord_p(m_1),\ord_p(n_1)}}.
\ea
Since we may assume from \eqref{eq:abaKloosterman_theta_sum} that $\ord_p(m_1), \ord_p(n_1)\leq r$, and $\ord_p(m_2) \leq s$, we have
\ba
\vb{\Kl\rb{n_{s_\alpha s_\beta s_\alpha}(c_1,c_2), \psi,\psi'}} \ll_\varepsilon (m_1,n_1,c_1) (m_2,c_2) (c_1,c_2) (c_1c_2)^{1/3+\varepsilon}
\ea
for every $\varepsilon>0$.

Now we consider $\Kl(n_{s_\beta s_\alpha s_\beta}(c_1,c_2),\psi,\psi')$. For $r\leq s/2$, we have
\ba
\scalebox{0.91}{$\displaystyle \vb{\Kl_p\rb{n_{s_\beta s_\alpha s_\beta,r,s}, \psi, \psi'}} \ll p^{\frac{3r}{2}+\frac{s}{2}+\frac{1}{2}\ord_p(m_1)+\frac{1}{2}\min\cb{\ord_p(m_2),\ord_p(n_2)}} \leq p^{-\frac{r}{2}+\frac{3s}{2}+\frac{1}{2}\ord_p(m_1)+\frac{1}{2}\min\cb{\ord_p(m_2),\ord_p(n_2)}}.$}
\ea
For $s/2<r<s$, we have
\ba
\scalebox{0.91}{$\displaystyle \vb{\Kl_p\rb{n_{s_\beta s_\alpha s_\beta,r,s}, \psi, \psi'}} \ll p^{-\frac{r}{2}+\frac{3s}{2}+\frac{1}{2}\ord_p(m_1)+\frac{1}{2}\min\cb{\ord_p(m_2),\ord_p(n_2)}} \leq p^{\frac{3r}{2}+\frac{s}{2}+\frac{1}{2}\ord_p(m_1)+\frac{1}{2}\min\cb{\ord_p(m_2),\ord_p(n_2)}}.$}
\ea
For $s=r$, we have
\ba
\vb{\Kl_p\rb{n_{s_\beta s_\alpha s_\beta,r,s}, \psi, \psi'}} \ll p^{s+\min\cb{\ord_p(m_1),\ord_p(n_2)}} = p^{r+\min\cb{\ord_p(m_1),\ord_p(n_2)}}.
\ea
So we can conclude for $0\leq r\leq s$ that
\ba
\vb{\Kl_p\rb{n_{s_\beta s_\alpha s_\beta,r,s}, \psi, \psi'}} \ll p^{\min\cb{\frac{3r}{2}+\frac{s}{2}, -\frac{r}{2}+\frac{3s}{2}}+\ord_p(m_1)+\frac{1}{2}\min\cb{\ord_p(m_2),\ord_p(n_2)}}.
\ea
Since we may assume from \eqref{eq:babKloosterman_theta_sum} that $\ord_p(m_1)\leq r$, and $\ord_p(m_2), \ord_p(n_2) \leq s$, we have
\ba
\vb{\Kl\rb{n_{s_\beta s_\alpha s_\beta}(c_1,c_2), \psi,\psi'}} \ll_\varepsilon (m_1,c_1) (m_2,n_2,c_2) (c_1^2,c_2) c_1^{-1/2} c_2^{1/2} (c_1c_2)^\varepsilon
\ea
for every $\varepsilon>0$.

For $\Kl(n_{w_0}(c_1,c_2),\psi,\psi')$, the local bound again consists of a single expression, so the local bounds given in \Cref{thm:local_bound} can be combined directly to give the stated global bound.
\end{proof}

\section{Symplectic Poincaré series} \label{section:sym_Poincare}

In this section, we compute the Fourier coefficients of symplectic Poincaré series, in terms of auxiliary Kloosterman sums.
\begin{dfn}
\bea
\item
Let $n\in N\rb{\Q_p}$, and $\psi_p, \psi'_p$ be characters of $U\rb{\Q_p}$ which are trivial on $U\rb{\Z_p}$. Then the local auxiliary Kloosterman sum is defined to be
\ba
\Klu_p\rb{n, \psi_p, \psi'_p} = \sum\limits_{\substack{x\in X(n)\\ x = b_1 n b_2}} \psi_p\rb{b_1} \psi'_p\rb{b_2}
\ea
if $\psi_p\rb{nun^{-1}} = \psi'_p\rb{u}$ for $u \in \ol U_n\rb{\Q_p}$, and zero otherwise. We say $\Klu_p\rb{n, \psi_p, \psi'_p}$ is well-defined if $\psi_p\rb{nun^{-1}} = \psi'_p\rb{u}$ for $u \in \ol U_n\rb{\Q_p}$. 
\item
Let $n\in N\rb{\Q}$, and $\psi = \prod\limits_p \psi_p$, $\psi' = \prod\limits_p \psi'_p$ be characters of $U\rb{\A}$ which are trivial on $\prod\limits_p U\rb{\Z_p}$. Then the global auxiliary Kloosterman sum is defined to be
\ba
\Klu\rb{n, \psi, \psi'} = \prod\limits_p \Klu_p\rb{n, \psi_p, \psi'_p}. 
\ea
\ee
\end{dfn}

We first show that the auxiliary Kloosterman sums are well-defined.
\begin{prp}\label{prp:Friedberg1.3}
\cite[Proposition 1.3]{Friedberg1987} Let $G = \Sp\rb{2r, \Q_p}$, $n \in N\rb{\Q_p}$, and $x \in X(n)$, with Bruhat decomposition $x = b_1 n b_2$, with $b_1, b_2 \in U\rb{\Q_p}$. Let $\psi, \psi'$ be characters of $U\rb{\Q_p}$ which are trivial on $U\rb{\Z_p}$. Then the quantity $\psi\rb{b_1} \psi'\rb{b_2}$ is well-defined as a function on $X(n)$ if $\psi\rb{nun^{-1}} = \psi'\rb{u}$ for $u \in \ol U_n\rb{\Q_p}$. 
\end{prp}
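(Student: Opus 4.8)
The plan is to reduce the well-definedness assertion to a single cancellation governed by $\ol U_n$. Two things must be checked: that $\psi\rb{b_1}\psi'\rb{b_2}$ does not depend on the choice of Bruhat factorization $x = b_1 n b_2$ with $b_1, b_2 \in U\rb{\Q_p}$, and that the resulting quantity is insensitive to the choice of representative of a class in $X(n) = U\rb{\Z_p} \bs C(n) / U_n\rb{\Z_p}$. I will treat these in turn.

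First, suppose $x = b_1 n b_2 = b'_1 n b'_2$ with all four factors in $U\rb{\Q_p}$. Rearranging yields $n^{-1}\rb{{b'_1}^{-1} b_1} n = b'_2 b_2^{-1}$; the left-hand side lies in $n^{-1} U\rb{\Q_p} n$ and the right-hand side in $U\rb{\Q_p}$, so their common value $u := b'_2 b_2^{-1}$ lies in $U\rb{\Q_p} \cap n^{-1} U\rb{\Q_p} n = \ol U_n\rb{\Q_p}$; in particular $n u n^{-1} \in U\rb{\Q_p}$ and $n u^{-1} n^{-1} = \rb{n u n^{-1}}^{-1} \in U\rb{\Q_p}$. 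Unwinding gives $b'_2 = u b_2$ and $b'_1 = b_1\, n u^{-1} n^{-1}$, whence $\psi\rb{b'_1}\psi'\rb{b'_2} = \psi\rb{b_1}\psi'\rb{b_2}\cdot\psi\rb{n u n^{-1}}^{-1}\psi'\rb{u}$ because $\psi$ and $\psi'$ are homomorphisms. The hypothesis $\psi\rb{nun^{-1}} = \psi'\rb{u}$ for $u \in \ol U_n\rb{\Q_p}$ makes the correction factor equal to $1$. This is the one step that uses the compatibility condition, and it is essentially the whole content of the statement; I do not expect any genuine obstacle here, only the bookkeeping needed to see that the ambiguity of the factorization is exactly an element of $\ol U_n$.

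Second, if $x$ and $\tilde x$ represent the same class in $X(n)$, write $\tilde x = \gamma x \delta$ with $\gamma \in U\rb{\Z_p}$ and $\delta \in U_n\rb{\Z_p}$; then $\tilde x = \rb{\gamma b_1} n \rb{b_2 \delta}$ is again a Bruhat factorization with factors in $U\rb{\Q_p}$, and since $\psi$ is trivial on $U\rb{\Z_p}$ and $\psi'$ is trivial on $U\rb{\Z_p} \supseteq U_n\rb{\Z_p}$, we obtain $\psi\rb{\gamma b_1}\psi'\rb{b_2\delta} = \psi\rb{b_1}\psi'\rb{b_2}$. Combined with the first step, this shows $\psi\rb{b_1}\psi'\rb{b_2}$ is a well-defined function of the class of $x$ in $X(n)$, which also yields the well-definedness of $\Klu_p\rb{n, \psi, \psi'}$. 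The argument is exactly Friedberg's for $\GL(r)$; nothing is special to $\Sp\rb{2r}$ here, as it uses only that $U$, $\ol U_n$, $U_n$ are groups together with the identity $\ol U_n = U \cap n^{-1} U n$.
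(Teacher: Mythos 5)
Your proof is correct, and the underlying mechanism (use $\psi(nun^{-1}) = \psi'(u)$ to kill the ambiguity coming from $\ol U_n(\Q_p)$, and use triviality of $\psi,\psi'$ on $U(\Z_p)$ to kill the coset-representative ambiguity) is the same one the paper relies on. The organization is somewhat different and, in fact, clearer: you isolate the two sources of indeterminacy — non-uniqueness of the Bruhat factorization of a \emph{fixed} $x\in C(n)$, and the choice of representative for the double coset in $X(n)$ — and handle them in sequence, invoking the compatibility hypothesis only in the first step where it is genuinely needed. The paper's proof instead introduces the coset ambiguity directly (writing $b'_1 = \gamma b_1$, $b'_2 = b_2\delta$ with $\gamma \in U(\Z_p)$, $\delta \in U_n(\Z_p)$) and then deduces that $b_2{b'_2}^{-1}\in\ol U_n(\Q_p)$, concluding via a double-coset identity and the hypothesis that $\psi'(b_2{b'_2}^{-1}) = \psi(b_1^{-1}b'_1)$. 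The intermediate claim in the paper that two Bruhat factorizations of $x$ differ only by $\gamma\in U(\Z_p)$ and $\delta\in U_n(\Z_p)$ glosses over precisely the $\ol U_n(\Q_p)$-ambiguity that your first step makes explicit; your version makes the role of the hypothesis transparent and leaves no such gap. Both routes are valid, but yours is the more carefully justified one.
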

\begin{proof}
Suppose $\psi\rb{nun^{-1}} = \psi'\rb{u}$ for all $u \in \ol U_n\rb{\Q_p}$. Let $x = b_1 n b_2 = b'_1 n b'_2$ be two Bruhat decompositions. This says $b'_1 = \gamma b_1$ for some $\gamma \in U(\Z_p)$, and $b'_2 = b_2 \delta$ for some $\delta \in U_n\rb{\Z_p}$. Then we have
\ba
U\rb{\Z_p} b_1 n b_2 \delta^{-1} = U\rb{\Z_p} b_1 n b_2,
\ea
which implies $b_2 {b'_2}^{-1} = b_2 \delta^{-1} b_2^{-1} \in \ol U_n\rb{\Q_p}$. Now, from the equivalence of Bruhat decompositions, we deduce that
\ba
U\rb{\Z_p} n b_2 {b'_2}^{-1} n^{-1} U_n\rb{\Z_p} = U\rb{\Z_p} b_1^{-1} b'_1 U_n\rb{\Z_p},
\ea
which implies $\psi' \rb{b_2{b'_2}^{-1}} = \psi \rb{nb_2{b'_2}^{-1} n^{-1}} = \psi\rb{b_1^{-1} b'_1}$. 
\end{proof}

\begin{prp}\label{prp:auxKl}
If $\Klu_p\rb{n, \psi_p, \psi'_p}$ is well-defined, then $\Klu_p\rb{n, \psi_p, \psi'_p} = \Kl_p\rb{n, \psi_p, \psi'_p}$.
\end{prp}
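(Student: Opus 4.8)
The plan is to reduce this to a purely formal comparison of the two definitions, with \Cref{prp:Friedberg1.3} as the only real input. Both $\Kl_p\rb{n, \psi_p, \psi'_p}$ and $\Klu_p\rb{n, \psi_p, \psi'_p}$ are sums over the same set $X(n)$, so it is enough to match the summands $x$ by $x$.

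First I would observe that every $x\in X(n)$ admits a Bruhat decomposition $x = u(x)\, n\, u'(x)$ in which the right factor lies in $U_n\rb{\Q_p}$: indeed $U\rb{\Q_p} = U_n\rb{\Q_p}\cdot\ol U_n\rb{\Q_p}$, and since $n\,\ol U_n\rb{\Q_p}\,n^{-1}\sbe U\rb{\Q_p}$ the $\ol U_n$-part of the right factor of any Bruhat decomposition can be pushed across $n$ and absorbed on the left. Moreover $U_n\rb{\Q_p}\cap\ol U_n\rb{\Q_p} = \cb{1}$ --- an element of the intersection is conjugated by $n$ into $U\rb{\Q_p}\cap U^T\rb{\Q_p} = \cb{1}$ --- so this normalized decomposition is exactly the one defining the maps $u$ and $u'$ attached to $\Kl_p$, and in particular $u'(x)\in U_n\rb{\Q_p}$, whence $\psi'_p\rb{u'(x)} = \psi'_p|_{U_n\rb{\Q_p}}\rb{u'(x)}$, which is the factor appearing in $\Kl_p\rb{n, \psi_p, \psi'_p}$ under the convention fixed in the introduction.

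Next, since $\Klu_p\rb{n, \psi_p, \psi'_p}$ is assumed well-defined, the relation $\psi_p\rb{n u n^{-1}} = \psi'_p\rb{u}$ holds for all $u\in\ol U_n\rb{\Q_p}$, which is precisely the hypothesis of \Cref{prp:Friedberg1.3}. That proposition tells us that $\psi_p\rb{b_1}\psi'_p\rb{b_2}$ is independent both of the chosen Bruhat decomposition $x = b_1 n b_2$ and of the chosen representative of $x\in X(n)$; evaluating it on the normalized decomposition above gives $\psi_p\rb{b_1}\psi'_p\rb{b_2} = \psi_p\rb{u(x)}\,\psi'_p\rb{u'(x)}$, which is the summand indexed by $x$ in $\Kl_p\rb{n, \psi_p, \psi'_p}$. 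Summing over $x\in X(n)$ then yields $\Klu_p\rb{n, \psi_p, \psi'_p} = \Kl_p\rb{n, \psi_p, \psi'_p}$. The one place that deserves care --- and the closest thing to an obstacle here --- is the interplay between the two conventions for $\psi'_p$ (a character of all of $U\rb{\Q_p}$ in the auxiliary sum, against a character of $U_n\rb{\Q_p}$ in $\Kl_p$): one must check that the $\ol U_n$-ambiguity in a general Bruhat decomposition, namely $b_1\mapsto b_1\, n v^{-1} n^{-1}$ and $b_2\mapsto v\, b_2$ with $v\in\ol U_n\rb{\Q_p}$, is annihilated by exactly the compatibility relation $\psi'_p\rb{v} = \psi_p\rb{n v n^{-1}}$. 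This is the content of \Cref{prp:Friedberg1.3}, so no work beyond quoting it is needed, and the rest is bookkeeping.
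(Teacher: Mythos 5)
The paper's own proof of this proposition is literally the single word ``Trivial,'' so there is no distinct approach to compare against; your argument supplies exactly the bookkeeping that the author elides. Your reasoning is correct: both sums are indexed by $X(n)$; the well-definedness hypothesis $\psi_p(nun^{-1}) = \psi'_p(u)$ on $\ol U_n(\Q_p)$ is exactly what \Cref{prp:Friedberg1.3} requires to make the summand $\psi_p(b_1)\psi'_p(b_2)$ of $\Klu_p$ independent of the chosen Bruhat decomposition $x = b_1 n b_2$; and evaluating on the normalized decomposition $x = u(x)\,n\,u'(x)$ with $u'(x)\in U_n(\Q_p)$ (which exists and is unique modulo $U_n(\Z_p)$ since $U(\Q_p) = U_n(\Q_p)\ol U_n(\Q_p)$ and $U_n(\Q_p)\cap\ol U_n(\Q_p)=\{1\}$, as you correctly verify) identifies it with the summand $\psi_p(u(x))\,\psi'_p|_{U_n(\Q_p)}(u'(x))$ of $\Kl_p$ under the restriction convention fixed in the introduction. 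This is presumably what the author means by ``Trivial,'' and your write-up is a faithful expansion of it.
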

\begin{proof}
Trivial.
\end{proof}

The Fourier coefficients $P_{\psi, \psi'} (g)$ can be evaluated using the following theorem of Friedberg: 
\begin{thm}\label{thm:FriedbergThmA}
\cite[Theorem A]{Friedberg1987} The Fourier coefficient $P_{\psi, \psi'} (g)$ of $\Sp(2r)$ Poincaré series is given by
\ba
P_{\psi, \psi'} (g) = \sum\limits_{w\in W} \sum\limits_{\substack{n \in N\rb{\Q}\\ w(n) = w}} \Klu\rb{n, \psi, \psi'} \int_{U_w\rb{\R}} \mc F_\psi\rb{n u_1 y} \ol{\psi'} \rb{u_1} du_1.
\ea
\end{thm}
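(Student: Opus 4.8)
The plan is to derive the formula by the classical unfolding of the Poincar\'e series, essentially reproducing \cite[Theorem A]{Friedberg1987}. First I would substitute the defining series $P_\psi(ug)=\sum_{\gamma\in P_0\cap\Gamma\bs\Gamma}\mc F_\psi(\gamma ug)$ into the integral defining $P_{\psi,\psi'}(g)$ and interchange sum and integral — legitimate since the rapid decay of $F$ makes the double sum--integral absolutely convergent — obtaining
\[
P_{\psi,\psi'}(g)=\sum_{\gamma\in P_0\cap\Gamma\bs\Gamma}\int_{U(\Z)\bs U(\R)}\mc F_\psi(\gamma ug)\,\ol{\psi'}(u)\,du .
\]
Then I would stratify $P_0\cap\Gamma\bs\Gamma$ by the Bruhat cell of $\gamma$ in $G(\Q)=U(\Q)N(\Q)U(\Q)$; it then suffices to show that the sub-sum over those $\gamma$ whose cell corresponds to a fixed $w\in W$ produces the displayed right-hand side, and afterwards to sum over $w$.

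For a fixed $w$ I would write each $\gamma$ in the cell in Bruhat form $\gamma=c_1 n c_2$ with $n\in N(\Q)$, $w(n)=w$, $c_1\in U(\Q)$, and — using $U=\ol U_n U_n$ together with $n\ol U_n n^{-1}\sbe U$ to absorb the $\ol U_n$-factor on the left — $c_2\in U_n(\Q)$; this decomposition is unique. Since left multiplication by $P_0\cap\Gamma$ changes $c_1$ only by $U(\Z)$ and $n$ within a finite $T(\Z)$-class, and the Bruhat decomposition inside $\Gamma$ is unique, the cell-$w$ cosets are indexed by $n$ together with a $c_2\in U_n(\Q)$ for which there is $c_1\in U(\Q)$ with $c_1 n c_2\in\Gamma$, taken modulo $U(\Z)$ on the left, with $c_1$ then determined modulo $U(\Z)$. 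Using the left $(U(\R),\psi)$-equivariance of $\mc F_\psi$ — immediate from $\mc F_\psi(uh)=\psi(u)\mc F_\psi(h)$ for all $u\in U(\R)$, $h\in G(\R)$, via the Iwasawa decomposition $G(\R)=U(\R)T(\R^+)K$ — the integrand becomes $\psi(c_1)\,\mc F_\psi(n c_2 ug)\,\ol{\psi'}(u)$.

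The core step is to group the $c_2$'s into $U_n(\Z)$-orbits: summing over the $U_n(\Z)$-translate within each orbit unfolds the $U_n$-direction of $u$, so that, writing $U(\R)=U_n(\R)\ol U_n(\R)$, the pair ``$\int_{U(\Z)\bs U(\R)}$ plus this sum'' becomes $\int_{U_n(\R)}\int_{\ol U_n(\Z)\bs\ol U_n(\R)}$. I expect the inner $\ol U_n$-integral, after conjugating $n\ol U_n n^{-1}$ to the left, to collapse to a character integral equal to $1$ if $\psi(nun^{-1})=\psi'(u)$ on $\ol U_n(\Q)$ and $0$ otherwise — precisely the well-definedness condition behind $\Klu_p$ in \Cref{prp:Friedberg1.3}. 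The remaining discrete sum, for each $n$, then runs over the finitely many $U_n(\Z)$-orbits of $c_2$, i.e.\ over $X(n)$, with weight $\psi(c_1)\psi'(c_2)=\psi(u(x))\psi'(u'(x))$, which by $\Klu=\prod_p\Klu_p$ and the Chinese remainder theorem is precisely $\Klu(n,\psi,\psi')$ (and vanishes exactly when the $\ol U_n$-integral did, so the two vanishing mechanisms agree). Using $P_{\psi,\psi'}(vg)=\psi'(v)P_{\psi,\psi'}(g)$ for $v\in U(\R)$ to reduce to $g=y\in T(\R^+)$, the cell-$w$ contribution then becomes $\sum_{w(n)=w}\Klu(n,\psi,\psi')\int_{U_w(\R)}\mc F_\psi(n u_1 y)\,\ol{\psi'}(u_1)\,du_1$, as required.

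The step I expect to be the main obstacle is this unfolding and bookkeeping: pinning down the precise indexing of the cell-$w$ cosets, carrying out the change of variables that cleanly separates the $U(\R)$-integral into its $U_n$- and $\ol U_n$-directions (delicate because $U_n$ and $\ol U_n$ need not normalise one another), and matching the resulting finite character sum with $\prod_p\Klu_p(n,\psi_p,\psi'_p)$. All of this is carried out in detail in \cite{Friedberg1987}.
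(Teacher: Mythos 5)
The paper does not supply its own proof here; it simply cites \cite[Theorem A]{Friedberg1987} and remarks that Friedberg's $\GL(r)$ argument carries over verbatim to $\Sp(2r)$. Your sketch is a faithful rendering of exactly that unfolding argument (Bruhat-cell stratification of $P_0\cap\Gamma\bs\Gamma$, absorption of the $\ol U_n$-factor of $c_2$ to the left via $n\ol U_n n^{-1}\sbe U$, unfolding the $U_n$-direction against the $U_n(\Z)$-orbits, collapse of the $\ol U_n(\Z)\bs\ol U_n(\R)$-integral to the compatibility condition of \Cref{prp:Friedberg1.3}, and identification of the residual finite character sum with $\Klu$ via CRT), so it is essentially the same as the argument the paper intends.
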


\begin{rmk}
In \cite{Friedberg1987}, the statement concerns $\GL(r)$ Poincaré series, but the proof also works for $\Sp(2r)$ Poincaré series.
\end{rmk}

\subsection{$\Sp(4)$ Poincaré series}

Let $G = \Sp\rb{4, \Q_p}$, and $\psi = \psi_{m_1, m_2}$, $\psi' = \psi_{n_1, n_2}$. We give a table of conditions for auxiliary $\Sp(4)$ Kloosterman sums $\Klu_p\rb{n_{w, r, s}, \psi, \psi'}$ to be well-defined.
\ba
\begin{array}{|c|c|c|c|}
\hline
w & {\text{Well-definedness conditions}} & w & {\text{Well-definedness conditions}}\\
\hline
\id & m_1=n_1, m_2=n_2 & s_\beta s_\alpha & m_1=n_2=0\\
\hline
s_\alpha  & m_2=n_2=0 & s_\alpha s_\beta s_\alpha & n_2 = m_2 p^{2r-2s}\\
\hline
s_\beta & m_1=n_1=0 & s_\beta s_\alpha s_\beta & n_1 = m_1 p^{s-2r}\\
\hline
s_\alpha s_\beta & m_2=n_1=0 & w_0 & -\\
\hline
\end{array}
\ea

\begin{rmk}
From this table, we see that not all Kloosterman sums $\Kl_p\rb{n, \psi, \psi'}$ correspond to a well-defined auxiliary Kloosterman sum $\Klu_p\rb{n, \psi, \psi'}$.
\end{rmk}

From the well-definedness conditions for $\Klu_p(n_{w,r,s},\psi,\psi')$, we see that when $\psi = \psi_{m_1,m_2}$, $\psi' = \psi_{n_1,n_2}$ are non-degenerate, i.e. $m_1m_2,n_1n_2\neq 0$, then 
\ba
w = \id, s_\alpha s_\beta s_\alpha, s_\beta s_\alpha s_\beta, w_0 \in W
\ea
are the only Weyl elements that contribute to the Fourier coefficient $P_{\psi,\psi'}(g)$.

%\bibliographystyle{alpha}
%\bibliography{general_bib}

\end{document}